\theoremstyle{plain}
\newtheorem{theorem}{Theorem}[section]
\newtheorem{prop}[theorem]{Proposition}
\newtheorem{lemma}{Lemma}[section]
\newtheorem{corol}{Corollary}[theorem]
\theoremstyle{definition}
\newtheorem{definition}{Definition}
\newtheorem{remark}{\textup{Remark}} %\textup for ``Remark'' is required
\newtheorem{example}{\textit{Example}} %\textit for ``Example'' is required
\numberwithin{equation}{section}
\begin{document}

\title[FIDL-modules: representation and duality]
{Modules with fusion and implication based over distributive lattices: representation and duality}
\author[Ismael Calomino \and William J. Zuluaga Botero]
{Ismael Calomino* \and William J. Zuluaga Botero**}

\newcommand{\acr}{\newline\indent}

\address{\llap{*\,}CIC and \acr
                   Departamento de Matem\'{a}ticas \acr
                   Facultad de Ciencias Exactas \acr
                   Universidad Nacional del Centro \acr
                   Tandil, ARGENTINA}
\email{calomino@exa.unicen.edu.ar}

\address{\llap{**\,}Laboratoire J. A. Dieudonn\'{e} \acr
                    Universit\'{e} C\^ote d'Azur \acr 
                    Nice, FRANCE \acr
                    and \acr
                    Departamento de Matem\'{a}ticas \acr
                    Facultad de Ciencias Exactas \acr
                    Universidad Nacional del Centro \acr             
                    Tandil, ARGENTINA}                   
\email{wizubo@gmail.com}

%%\acr is not required (if you do not need to see a column);
%%in our style \\ makes a column automatically

\thanks{This work was supported by the CONICET under Grant PIP 112-201501-00412}

\subjclass[2010]{Primary 06D50, 06D05; Secondary 06D75}
\keywords{Distributive lattice, module, Priestley-like duality}

\begin{abstract}
In this paper we study the class of {\it{modules with fusion and implication based over distributive lattices}}, or {\it{FIDL-modules}}, for short. We introduce the concepts of FIDL-subalgebra and FIDL-congruence as well as the notions of simple and subdirectly irreducible FIDL-modules. We give a bi-sorted Priestley-like duality for FIDL-modules and moreover, as an application of such a duality, we provide a topological bi-spaced description of the FIDL-congruences. This result will allows us to characterize the simple and subdirectly irreducible FIDL-modules.
\end{abstract}

\maketitle

\section{Introduction} \label{Introduction}

Bounded distributive lattices with additional operators occur often as algebraic models of non-classical logics. This is the case of Boolean algebras which are the algebraic semantics of classical logic, Heyting algebras which model intuitionistic logic, BL-algebras which correspond to algebraic semantics of basic propositional logic (\cite{Hohle}), MTL-algebras which are the algebraic semantics of the basic fuzzy logic of left-continuous t-norms (\cite{Esteva-Godo,Cabrer-Celani}), Modal algebras which model propositional modal logics (\cite{Chagrov,Venema}), to name a few. In all these cases, the binary operations $\vee$ and $\wedge$ model logical disjunction and conjunction and the additional operations are usually interpretations of other logical connectives such as the modal necessity ($\Box$) or modal possibility ($\Diamond$), or various types of implication. All these operations has as a common property: the preservation of some part of the lattice structure, for example, the necessity modal operator satisfies the conditions $\Box1=1$ and $\Box(x\wedge y)=\Box (x) \wedge \Box (y) $, or the possibility modal operator $\Diamond 0 =0$ and $\Diamond (x \vee y)= \Diamond (x) \vee \Diamond (y)$. 

In some sense, the aforementioned may suggest that these ideas can be treated as a more general phenomenon which can be studied by employing tools of universal algebra. Some papers in which this approach is used are \cite{Goldblatt} and \cite{Stokkermans}. Nevertheless, in an independent way, a more concrete treatment of the preservation of the lattice structure by two additional connectives in a distributive lattice leads to the introduction of the class of distributive lattices with fusion and implication in \cite{Celani1}, which encompasses all the algebraic structures mentioned before. 

The aim of this paper is to introduce the class of {\it{modules with fusion and implication based over distributive lattices}}, for short, {\it{FIDL-modules}}. The FIDL-modules generalize both distributive lattices with fusion and implication and modal distributive lattices, giving a different approach to study these structures. A bi-sorted Priestley-like duality is developed for FIDL-modules, extending the dualities given in \cite{Celani1} for distributive lattices with fusion and implication and in \cite{Urquhart} for algebras of relevant logics. This duality enables us to describe the congruences of a FIDL-module and also to give a topological characterization of the simple and subdirectly irreducible FIDL-modules. 

The paper is organized as follows. In Section \ref{Preliminaries} we give some definitions and introduce the notations which are needed for the rest of the paper. In Section \ref{FIDL-modules} we introduce the class of modules with fusion and implication based over distributive lattices, or simply FIDL-modules. Also the concept of FIDL-subalgebra is developed and studied. In Section \ref{Representation of FIDL-modules} we study the notion of FIDL-homomorphism and we exhibit a representation theorem for FIDL-modules by means of relational structures. In Section \ref{Topological duality for FIDL-modules} we use the representation theorem and together with a suitable extension of the Priestley duality, we  obtain a duality for FIDL-modules as certain topological bi-spaces. Finally, in Section \ref{Congruences of FIDL-modules} we introduce the notion of congruence of FIDL-modules and as an application of the duality, we obtain a topological bi-spaced description for the simple and subdirectly irreducible FIDL-modules.

\section{Preliminaries} \label{Preliminaries}

Given a poset $\langle X, \leq \rangle$, a subset $U \subseteq X$ is said to be {\it{increasing}} ({\it{decreasing}}), if for every $x,y \in X$ such that $x \in U$ ($y \in U$) and $x \leq y$, then $y \in U$ ($x \in U$). The set of all increasing subsets of $X$ is denoted by $\mathcal{P}_{i}(X)$. For each $Y \subseteq X$, the increasing (decreasing) set generated by $Y$ is $[Y)=\{ x \in X \colon \exists y \in Y (y \leq x) \}$ ($(Y]=\{ x \in X \colon \exists y \in Y (x \leq y) \}$). If $Y=\{y\}$, then we will write $[y)$ and $(y]$ instead of $[\{y\})$ and $(\{y\}]$, respectively.

Given a bounded distributive lattice ${\bf{A}} = \langle A, \vee, \wedge, 0, 1 \rangle$, a set $F \subseteq A$ is called {\it{filter}} if $1 \in F$, $F$ is increasing, and if $a,b \in F$, then $a \wedge b \in F$. The {\it{filter generated by a subset $X \subseteq A$}} is the set 
\begin{equation*}
{\rm{Fig}}_{\bf{A}}(X)= \{ x \in A \colon \exists x_{1}, \hdots ,x_{n} \in X {\hspace{0.1cm}} {\text{such that}} {\hspace{0.1cm}} x_{1} \wedge \hdots \wedge x_{n} \leq x \}. 
\end{equation*}
If $X=\{a\}$, then ${\rm{Fig}}_{\bf{A}}(\{a\}) = [a)$. Denote by ${\rm{Fi}}(\textbf{A})$ the set of all filters of ${\bf{A}}$. A proper filter $P$ is {\it{prime}} if for every $a,b \in A$, $a \vee b \in P$ implies $a \in P$ or $b \in P$. We write $\mathcal{X}(\bf{A})$ the set of all prime filters of $\bf{A}$. Similarly, a set $I \subseteq A$ is called {\it{ideal}} if $0 \in I$, $I$ is decreasing, and if $a,b \in I$, then $a \vee b \in I$. Then the {\it{ideal generated by a subset $X \subseteq A$}} is the set 
\begin{equation*}
{\rm{Idg}}_{\bf{A}}(X)= \{ x \in A \colon \exists x_{1}, \hdots ,x_{n} \in X {\hspace{0.1cm}} {\text{such that}} {\hspace{0.1cm}} x \leq x_{1} \vee \hdots \vee x_{n} \}. 
\end{equation*}
In particular, if $X=\{a\}$, then ${\rm{Idg}}_{\bf{A}}(\{a\}) = (a]$. Denote by ${\rm{Id}}(\textbf{A})$ the set of all ideals of ${\bf{A}}$. Let $\beta_{\bf{A}} \colon A \to \mathcal{P}_{i}(\mathcal{X}(\bf{A}))$ be the map defined by $\beta_{\bf{A}}(a)=\{ P \in \mathcal{X}({\bf{A}}) \colon a \in P\}$. Then the family $\beta_{\bf{A}}[A]=\{ \beta_{\bf{A}}(a) \colon a \in A\}$ is closed under unions, intersections, and contains $\emptyset$ and $A$, i.e., it is a bounded distributive lattice. Moreover, $\beta_{\bf{A}}$ establishes an isomorphism between ${\bf{A}}$ and $\beta_{\bf{A}}[A]$.

A {\it{Priestley space}} is a triple $\langle X, \leq, \tau \rangle$ where $\langle X, \leq \rangle$ is a poset and $\langle X, \tau \rangle$ is a compact totally order-disconnected topological space. A morphism between Priestley spaces is a continuous and monotone function between them. If $\langle X, \leq, \tau \rangle$ is a Priestley space, then the family of all clopen increasing sets is denoted by $\mathcal{C}(X)$, and it is well known that $\mathcal{C}(X)$ is a bounded distributive lattice. The Priestley space of a bounded distributive lattice ${\bf{A}}$ is the triple $\langle \mathcal{X}({\bf{A}}), \subseteq_{\bf{A}}, \tau_{\bf{A}} \rangle$, where $\tau_{\bf{A}}$ is the topology generated by taking as a subbase the family $\{ \beta_{\bf{A}}(a) \colon a \in A \} \cup \{ \beta_{\bf{A}}(a)^{c} \colon a \in A \}$, where $\beta_{\bf{A}}(a)^{c} = \mathcal{X}({\bf{A}}) - \beta_{\bf{A}}(a)$. Therefore, ${\bf{A}}$ and $\mathcal{C}(\mathcal{X}({\bf{A}}))$ are isomorphic. If $\langle X, \leq, \tau \rangle$ is a Priestley space, then the map $\epsilon_{X} \colon X \to \mathcal{X}(\mathcal{C}(X))$ defined by $\epsilon_{X}(x) = \{ U \in \mathcal{C}(X) \colon x \in U \}$, for every $x \in X$, is a homeomorphism and an order-isomorphism. On the other hand, if $Y$ is a closed set of $\mathcal{X}({\bf{A}})$, then the relation 
\begin{equation} \label{congruence-closed}
\theta(Y)=\{(a,b)\in A\times A \colon \beta_{\textbf{A}}(a)\cap Y = \beta_{\textbf{A}}(b) \cap Y \}
\end{equation}
is a congruence of ${\bf{A}}$ and the correspondence $Y\rightarrow \theta(Y)$ establishes an anti-isomorphism between the lattice of closed subsets of $\mathcal{X}({\bf{A}})$ and the lattice of congruences of $\textbf{A}$. 

If $h \colon A \to B$ is a homomorphism between bounded distributive lattices ${\bf{A}}$ and ${\bf{B}}$, then the map $h^{*} \colon \mathcal{X}({\bf{B}}) \to \mathcal{X}({\bf{A}})$ defined by $h^{*}(P)=h^{-1}(P)$, for each $P \in \mathcal{X}({\bf{B}})$, is a continuous and monotone function. Conversely, if $\langle X, \leq_{X}, \tau_{X} \rangle$ and $\langle Y, \leq_{Y}, \tau_{Y} \rangle$ are Priestley spaces and $f \colon X \to Y$ is a continuous and monotone function, then the map $f^{*} \colon \mathcal{C}(Y) \to \mathcal{C}(X)$ defined by $f^{*}(U)=f^{-1}(U)$, for each $\mathcal{C}(Y)$, is a homomorphism between bounded distributive lattices. Furthermore, there is a duality between the algebraic category of bounded distributive lattices with homomorphisms and the category of Priestley spaces with continuous and monotone functions (\cite{Priestley,CLP,Cignoli}).

\section{FIDL-modules} \label{FIDL-modules}

In this section we present the class of \emph{modules with fusion and implication based over distributive lattices}, or \emph{FIDL-modules}, for short. These structures can be considered as bi-sorted distributive lattices endowed with two operations which preserve some of the lattice structure. We introduce the notion of FIDL-subalgebra and we exhibit a characterization of those in terms of some relations.

\begin{definition} \label{def_FIDL-modules}
Let ${\bf{A}}$, ${\bf{B}}$ be two bounded distributive lattices. A structure $\langle {\bf{A}}, {\bf{B}}, f \rangle$ is called a {\rm{FDL-module}}, if $f \colon A \times B \to A$ is a function such that for every $x,y \in A$ and every $b,c \in B$ the following conditions hold:
\begin{itemize}
\item[(F1)] $f(x \vee y, b)=f(x,b) \vee f(y,b)$,
\item[(F2)] $f(x, b \vee c)=f(x,b) \vee f(x,c)$,
\item[(F3)] $f(0,b)=0$,
\item[(F4)] $f(x,0)=0$.
\end{itemize}
A structure $\langle {\bf{A}}, {\bf{B}}, i \rangle$ is called an {\rm{IDL-module}}, if $i \colon B \times A \to A$ is a function such that for every $x,y \in A$ and every $b,c \in B$ the following conditions hold:
\begin{itemize}
\item[(I1)] $i(b, x \wedge y)=i(b,x) \wedge i(b,y)$,
\item[(I2)] $i(b \vee c, x)=i(b,x) \wedge i(c,x)$,
\item[(I3)] $i(b,1)=1$.
\end{itemize}
Moreover, a structure $\mathcal{M}=\langle {\bf{A}}, {\bf{B}}, f, i \rangle$ is called a {\rm{FIDL-module}}, if $\langle {\bf{A}}, {\bf{B}}, f \rangle$ is a FDL-module and $\langle {\bf{A}}, {\bf{B}}, i \rangle$ is an IDL-module.
\end{definition}

\begin{remark} \label{Fusion and impliction as unary operations}
Let $\mathcal{M}$ be a FIDL-module. Then the function $f$ determines and it is determined by a unique family $\mathcal{F}_{\bf{B}} = \{ f_{b} \colon A \to A \mid b \in B \}$ of unary operations on $\bf{A}$ such that for every $x,y \in A$ and every $b,c \in B$ the following conditions hold:
\begin{itemize}
\item[(F1')] $f_{b}(x \vee y)=f_{b}(x) \vee f_{b}(y)$,
\item[(F2')] $f_{b \vee c}(x)=f_{b}(x) \vee f_{c}(x)$,
\item[(F3')] $f_{b}(0)=0$,
\item[(F4')] $f_{0}(x)=0$.
\end{itemize}
Analogously, the function $i$ determines and it is determined by a unique family $\mathcal{I}_{\bf{B}} = \{ i_{b} \colon A \to A \mid b \in B \}$ of unary operations on $\bf{A}$ such that for every $x,y \in A$ and every $b,c \in B$ the following conditions hold:
\begin{itemize}
\item[(I1')] $i_{b}(x \wedge y)=i_{b}(x) \wedge i_{b}(y)$,
\item[(I2')] $i_{b \vee c}(x)=i_{b}(x) \wedge i_{c}(x)$,
\item[(I3')] $i_{b}(1)=1$.
\end{itemize}
Hence the FIDL-module $\mathcal{M}$ is equivalent to the structure $\langle {\bf{A}}, \mathcal{F}_{\bf{B}}, \mathcal{I}_{\bf{B}} \rangle$. Therefore, along this paper we will use the families $\mathcal{F}_{\bf{B}}$ and $\mathcal{I}_{\bf{B}}$ and its corresponding functions $f$ and $i$ indistinctly.
\end{remark}

The following are important examples of FIDL-modules.

\begin{example}
An algebra $\langle {\bf{A}}, \circ, \to \rangle$ is a {\it{bounded distributive lattice with fusion and implication}} (\cite{Celani1,Cabrer-Celani}), if ${\bf{A}}$ is a bounded distributive lattice and $\circ$ and $\to$ are binary operations defined on ${\bf{A}}$ such that for all $x,y,z \in A$ the following conditions hold: 
\begin{enumerate}
\item $x \circ (y \vee z) = (x \circ y) \vee (x \circ z)$,
\item $(x \vee y) \circ z = (x \circ z) \vee (y \circ z)$,
\item $x \circ 0 = 0 \circ x = 0$,
\item $x \to 1 = 1$, 
\item $(x \to y) \wedge (x \to z) = x \to (y \wedge z)$,
\item $(x \to z) \wedge (y \to z) = (x \vee y) \to z$. 
\end{enumerate}
Notice that if $\mathcal{M}$ is a FIDL-module such that $B=A$ and we consider the functions $x \circ_{f} y = f(x,y)$ and $x \to_{i} y = i(x,y)$, then $\langle {\bf{A}}, \circ , \to \rangle$ is a bounded distributive lattice with fusion and implication. Moreover, if $\mathcal{M}$ satisfies the condition $f(x,y) \leq z$ if and only if $x \leq i(y,z)$, then the structure $\langle {\bf{A}}, \circ , \to \rangle$ is a residuated lattice (\cite{JipsenTsinakis}).
\end{example}

\begin{example}
Recall that an algebra $\langle {\bf{A}}, \Box, \Diamond \rangle$ is a {\it{modal distributive lattice}}\footnote{Also called in \cite{Petrovich} \emph{distributive lattices with join and meet-homomorphisms}.}, or {\it{$\Box \Diamond$-lattice}}, if ${\bf{A}}$ is a bounded distributive lattice and $\Box$ and $\Diamond$ are unary operations defined on ${\bf{A}}$ such that for every $x,y \in A$ we have $\Box 1 = 1$, $\Box(x \wedge y)= \Box(x) \wedge \Box(y)$, $\Diamond 0 = 0$ and $\Diamond (x \vee y) = \Diamond (x) \vee \Diamond (y)$ (\cite{Chagrov,Venema,C2005}). If $\mathcal{M}$ is a FIDL-module and $B=\{0,1\}$, we can consider the functions $\Diamond_{f} (x) = f(x,1)$ and $\Box_{i} (x) = i(1,x)$ such that $\langle {\bf{A}}, \Diamond, \Box \rangle$ is a $\Box \Diamond$-lattice.
\end{example}

\begin{example} 
Let $\langle {\bf{A}}, \to \rangle$ be a Heyting algebra, where ${\bf{A}}$ is its bounded lattice reduct. Let $X$ be a non-empty set and let ${\bf{A}}^{X} = \langle A^{X}, \vee, \wedge, 0, 1 \rangle$ be the bounded distributive lattice of functions from $X$ to $A$ with the operations defined pointwise. Then, by following the notation of Remark \ref{Fusion and impliction as unary operations}, if we consider the families of functions $\mathcal{F}_{\bf{A}}=\{f_{a} \colon A^{X} \to A^{X} \mid a \in A\}$ and $\mathcal{I}_{\bf{A}}=\{i_{a} \colon A^{X} \to A^{X} \mid a \in A\}$ defined for every $a \in A$ by $f_{a}(g)(x) = a \wedge g(x)$ and $i_{a}(g)(x) = a \to g(x)$, respectively, it is the case that $\langle {\bf{A}}^{X}, \mathcal{F}_{\bf{A}}, \mathcal{I}_{\bf{A}} \rangle$ is a FIDL-module.
\end{example}

The following results are inspired by \cite{Celani1}.

\begin{prop} \label{propo_1}
Let $\mathcal{M}$ be a FIDL-module. Then for every $x,y \in A$ and every $b,c \in B$, if $x \leq y$ and $b \leq c$, then $f(x,b) \leq f(y,c)$ and $i(c,x) \leq i(b,y)$.
\end{prop}

\begin{proof}
Since $y=y \vee x$ and $c=b \vee c$, then by (F1) and (F2) of Definition \ref{def_FIDL-modules}
\begin{equation*}
f(y,c)=f(y \vee x, b \vee c)=f(y,b) \vee f(y,c) \vee f(x,b) \vee f(x,c) \geq f(x,b),
\end{equation*}
i.e., $f(x,b) \leq f(y,c)$. Analogously, as $x= x \wedge y$, by (I1) and (I2) of Definition \ref{def_FIDL-modules} we have
\begin{equation*}
i(c,x)=i(b \vee c, x \wedge y)=i(b,x) \wedge i(b,y) \wedge i(c,x) \wedge i(c,y) \leq i(b,y)
\end{equation*}
and $i(c,x) \leq i(b,y)$.
\end{proof}

Let $\mathcal{M}$ be a FIDL-module. Let $G \in {\rm{Fi}}(\bf{A})$ and $H \in {\rm{Fi}}(\bf{B})$. We define the following subsets:
\begin{equation*}
f(G,H)=\{x \in A \colon \exists (g,h) \in G \times H {\hspace{0.1cm}} {\text{such that}} {\hspace{0.1cm}} f(g,h) \leq x \}
\end{equation*}
and 
\begin{equation*}
i(H,G)=\{x \in A \colon \exists (h,g) \in H \times G {\hspace{0.1cm}} {\text{such that}} {\hspace{0.1cm}} g \leq i(h,x) \}.
\end{equation*}

\begin{prop}
Let $\mathcal{M}$ be a FIDL-module. If $G \in {\rm{Fi}}(\bf{A})$ and $H \in {\rm{Fi}}(\bf{B})$, then $f(G,H), i(H,G) \in {\rm{Fi}}(\bf{A})$.
\end{prop}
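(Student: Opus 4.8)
The plan is to verify directly the three defining conditions of a filter for each of the sets $f(G,H)$ and $i(H,G)$, using only the monotonicity of $f$ and $i$ from Proposition \ref{propo_1} together with the fact that $G$ and $H$, being filters, are nonempty, increasing, and closed under finite meets.

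For $f(G,H)$, nonemptiness of $G$ and $H$ together with $f(g,h)\le 1$ for any $g\in G$, $h\in H$ gives $1\in f(G,H)$; and if $x\in f(G,H)$ with witness $(g,h)$ and $x\le y$, then $f(g,h)\le x\le y$ shows $y\in f(G,H)$, so $f(G,H)$ is increasing. For closure under meets, given $x_{1},x_{2}\in f(G,H)$ witnessed by $(g_{1},h_{1})$ and $(g_{2},h_{2})$, I would take $g=g_{1}\wedge g_{2}\in G$ and $h=h_{1}\wedge h_{2}\in H$; since $g\le g_{j}$ and $h\le h_{j}$, Proposition \ref{propo_1} yields $f(g,h)\le f(g_{j},h_{j})\le x_{j}$ for $j=1,2$, hence $f(g,h)\le x_{1}\wedge x_{2}$ and $x_{1}\wedge x_{2}\in f(G,H)$.

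For $i(H,G)$, for any $h\in H$ and $g\in G$ we have $g\le 1=i(h,1)$ by (I3) of Definition \ref{def_FIDL-modules}, so $1\in i(H,G)$; if $g\le i(h,x)$ and $x\le y$, then monotonicity of $i$ in its second coordinate (Proposition \ref{propo_1}) gives $g\le i(h,x)\le i(h,y)$, so $i(H,G)$ is increasing. For closure under meets, given $x_{1},x_{2}\in i(H,G)$ with $g_{1}\le i(h_{1},x_{1})$ and $g_{2}\le i(h_{2},x_{2})$, I would take $h=h_{1}\wedge h_{2}\in H$ and $g=g_{1}\wedge g_{2}\in G$. Since $i$ is antitone in its first coordinate, $h\le h_{j}$ gives $i(h_{j},x_{j})\le i(h,x_{j})$, whence $g\le g_{j}\le i(h_{j},x_{j})\le i(h,x_{j})$ for $j=1,2$; then (I1) gives $g\le i(h,x_{1})\wedge i(h,x_{2})=i(h,x_{1}\wedge x_{2})$, so $x_{1}\wedge x_{2}\in i(H,G)$.

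The only point requiring care is in the $i$-case: one must use the meet $h_{1}\wedge h_{2}$ as the $H$-witness rather than any join, precisely because $i$ reverses order in its first argument — a choice such as $h_{1}\vee h_{2}$ would break the required inequalities. Apart from that, the whole argument is a routine check against the filter axioms and Proposition \ref{propo_1}.
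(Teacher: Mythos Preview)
Your proof is correct and follows essentially the same approach as the paper: a direct verification of the filter axioms using the monotonicity properties of Proposition~\ref{propo_1}, taking meets of witnesses to handle closure under $\wedge$. The paper gives only the $f(G,H)$ case explicitly and dismisses $i(H,G)$ as ``similar''; your write-up of the $i$-case (in particular the use of (I3) for $1$ and of (I1) combined with antitonicity in the first argument for closure under meets) is exactly the argument the paper omits.
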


\begin{proof}
We prove that $f(G,H) \in {\rm{Fi}}(\bf{A})$. It is clear that $1 \in f(G,H)$ and $f(G,H)$ is increasing. If $x,y \in f(G,H)$, then there exist $(g,h), (\hat{g}, \hat{h}) \in G \times H$ such that $f(g,h) \leq x$ and $f(\hat{g}, \hat{h}) \leq y$. Since $G$ and $H$ are filters, $\bar{g}=g \wedge \hat{g} \in G$ and $\bar{h}=h \wedge \hat{h} \in H$. By Proposition \ref{propo_1}, $f(\bar{g}, \bar{h}) \leq x$ and $f(\bar{g}, \bar{h}) \leq y$. So, $f(\bar{g}, \bar{h}) \leq x \wedge y$ and $x \wedge y \in f(G,H)$. Then $f(G,H) \in {\rm{Fi}}(\bf{A})$. The proof for $i(H,G) \in {\rm{Fi}}(\bf{A})$ is similar.
\end{proof}

\begin{theorem} \label{theo_1}
Let $\mathcal{M}$ be a FIDL-module. Let $G \in {\rm{Fi}}(\bf{A})$, $H \in {\rm{Fi}}(\bf{B})$ and $P \in {\mathcal{X}}({\bf{A}})$. Then:
\begin{enumerate}
\item If $f(G,H) \subseteq P$, then there exist $Q \in {\mathcal{X}}({\bf{A}})$ and $R \in {\mathcal{X}}({\bf{B}})$ such that $G \subseteq Q$, $H \subseteq R$ and $f(Q,R) \subseteq P$.
\item If $i(H,G) \subseteq P$, then there exist $R \in {\mathcal{X}}({\bf{B}})$ and $Q \in {\mathcal{X}}({\bf{A}})$ such that $H \subseteq R$, $G \subseteq Q$ and $i(R,Q) \subseteq P$.
\end{enumerate}
\end{theorem}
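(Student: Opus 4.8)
The plan is to prove both items by the classical two--step \emph{prime filter separation} argument: in each case we first enlarge $G$ to a prime filter $Q$ of ${\bf{A}}$ preserving the relevant inclusion, and then enlarge $H$ to a prime filter $R$ of ${\bf{B}}$ preserving it again. It is convenient to record the unwinding of the data. Since $P$ is increasing, $f(Q,H)\subseteq P$ holds iff $f(q,h)\in P$ for all $q\in Q$ and $h\in H$; and since $G$ is increasing, $i(H,G)=\{x\in A:\exists h\in H,\ i(h,x)\in G\}$, so $i(H,G)\subseteq P$ holds iff $i(h,x)\notin G$ whenever $h\in H$ and $x\notin P$ (the same reformulations apply with $Q$, $R$ in place of $G$, $H$). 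The only external input is the prime filter theorem for bounded distributive lattices: a filter disjoint from an ideal extends to a prime filter still disjoint from that ideal.

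For item (1), put $I=\{a\in A:f(a,h)\notin P\text{ for some }h\in H\}$. One checks that $I$ is an ideal of ${\bf{A}}$: $0\in I$ by (F3) (take $h=1$); $I$ is decreasing by monotonicity of $f$ in the first coordinate (Proposition~\ref{propo_1}) together with $P$ increasing; and if $a,a'\in I$ with witnesses $h,h'$, then for $\bar h=h\wedge h'\in H$ both $f(a,\bar h)$ and $f(a',\bar h)$ lie outside $P$, so by (F1) and primeness of $P$ so does $f(a\vee a',\bar h)=f(a,\bar h)\vee f(a',\bar h)$, giving $a\vee a'\in I$. Moreover $G\cap I=\emptyset$ by hypothesis, so there is a prime filter $Q\supseteq G$ with $Q\cap I=\emptyset$, which means exactly $f(Q,H)\subseteq P$. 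The second step is the mirror image in ${\bf{B}}$: let $J=\{b\in B:f(q,b)\notin P\text{ for some }q\in Q\}$, use (F4), Proposition~\ref{propo_1} and (F2) to see $J$ is an ideal of ${\bf{B}}$, note $H\cap J=\emptyset$ because $f(Q,H)\subseteq P$, and extend $H$ to a prime filter $R$ with $R\cap J=\emptyset$, i.e. $f(Q,R)\subseteq P$. This item is essentially routine.

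Item (2) is the substantive one. For the first step we want a prime $Q\supseteq G$ with $i(H,Q)\subseteq P$, and we obtain it by separating $G$ from the ideal $I=\mathrm{Idg}_{{\bf{A}}}(S)$ generated by $S=\{i(h,x):h\in H,\ x\in A\setminus P\}$. Suppose $g\in G\cap I$, say $g\le i(h_1,x_1)\vee\cdots\vee i(h_n,x_n)$ with $h_k\in H$ and $x_k\notin P$. Set $h=h_1\wedge\cdots\wedge h_n\in H$ and $x=x_1\vee\cdots\vee x_n$; since $P$ is prime, $x\notin P$. By Proposition~\ref{propo_1} ($i$ is antitone in the first coordinate and monotone in the second) we get $i(h_k,x_k)\le i(h,x)$ for each $k$, hence $g\le i(h,x)$ and so $i(h,x)\in G$, forcing $x\in i(H,G)\subseteq P$, a contradiction. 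Thus $G\cap I=\emptyset$, and any prime $Q\supseteq G$ with $Q\cap I=\emptyset$ is disjoint from $S$, i.e. $i(H,Q)\subseteq P$. For the second step we want a prime $R\supseteq H$ with $i(R,Q)\subseteq P$, obtained by separating $H$ from $J=\mathrm{Idg}_{{\bf{B}}}(T)$ where $T=\{b\in B:i(b,x)\in Q\text{ for some }x\notin P\}$. If $h\in H\cap J$, say $h\le b_1\vee\cdots\vee b_n$ with $i(b_k,x_k)\in Q$ and $x_k\notin P$, set $x=x_1\vee\cdots\vee x_n\notin P$; monotonicity in the second coordinate gives $i(b_k,x)\in Q$, hence $\bigwedge_k i(b_k,x)\in Q$, and by (I2) applied inductively this meet equals $i(b_1\vee\cdots\vee b_n,x)$, which by antitonicity in the first coordinate is $\le i(h,x)$; so $i(h,x)\in Q$ and $x\in i(H,Q)\subseteq P$, again a contradiction. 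Hence $H\cap J=\emptyset$, and a prime $R\supseteq H$ with $R\cap J=\emptyset$ satisfies $i(R,Q)\subseteq P$, which completes the proof.

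The point that needs care --- and the reason item (2) is not a mechanical copy of item (1) --- is that $i(b,-)$ sends meets to meets but interacts with joins only through monotonicity, so the "ideal to avoid" cannot be built as transparently as in the $f$--case. The resolution, used twice above, is to merge the finitely many offending witnesses $x_1,\dots,x_n\notin P$ into a single witness $x=\bigvee_k x_k$, which remains outside $P$ precisely because $P$ is prime, while Proposition~\ref{propo_1} lets us collapse $h_1,\dots,h_n$ (respectively $b_1,\dots,b_n$) in the first coordinate simultaneously. With that observation everything reduces to the prime filter theorem. One could instead run a single Zorn's Lemma argument on the poset of filters $Q'\supseteq G$ with $f(Q',H)\subseteq P$ (and the $i$--analogues), where maximality forces primeness by the very same join trick; the separation--theorem version above is shorter.
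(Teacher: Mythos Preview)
Your proof is correct, and the two--step prime--filter--separation argument you give is a genuinely different route from the one the paper takes. The paper proves only item~(1) and argues by a \emph{single} Zorn's Lemma on the family
\[
\mathcal{J}=\{(K,W)\in{\rm Fi}({\bf A})\times{\rm Fi}({\bf B}): G\subseteq K,\ H\subseteq W,\ f(K,W)\subseteq P\},
\]
taking a maximal pair $(Q,R)$ and then showing directly (using maximality and the primeness of $P$) that $Q$ and $R$ are prime. You instead run two sequential applications of the Prime Filter Theorem, each time packaging the obstruction as an explicit ideal ($I=\{a:\exists h\in H,\ f(a,h)\notin P\}$, then $J=\{b:\exists q\in Q,\ f(q,b)\notin P\}$, and analogously for $i$). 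The two approaches are essentially dual reorganisations of the same join trick---you even note this in your closing paragraph---but yours has the advantage of isolating the algebraic content (the ideals $I$, $J$, $\mathrm{Idg}(S)$, $\mathrm{Idg}(T)$) from the separation step, and of making the asymmetry between items~(1) and~(2) visible: for $f$ the obstruction sets are already ideals, while for $i$ you must first pass to the generated ideal and then collapse the finitely many witnesses $x_1,\dots,x_n\notin P$ into $x=\bigvee_k x_k\notin P$ using primeness of $P$. The paper's approach hides this asymmetry behind ``the proof of~(2) is analogous'', which is true but less informative. One small remark: in the second step of item~(2), your argument that $H\cap J=\emptyset$ tacitly assumes $n\ge1$ (so that $x=\bigvee x_k$ is well-defined and outside $P$); the case $n=0$ would give $h\le0$, hence $h=0\in H$, which is excluded once $H$ is proper---and $H$ being proper is the only situation in which the conclusion of~(2) can possibly hold.
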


\begin{proof}
We prove only $(1)$ because the proof of $(2)$ is analogous. Let us consider the family
\begin{equation*}
\mathcal{J} = \{ (K,W) \in {\rm{Fi}}({\bf{A}}) \times {\rm{Fi}}({\bf{B}}) \colon G \subseteq K, H \subseteq W \hspace{0.1cm} {\text{and}} \hspace{0.1cm} f(K,W) \subseteq P \}.
\end{equation*}
Since $(G,H) \in \mathcal{J}$, then $\mathcal{J} \neq \emptyset$. Observe that the union of a chain of elements of $\mathcal{J}$ is also in $\mathcal{J}$. So, by Zorn's Lemma, there is a maximal element $(Q,R) \in \mathcal{J}$. We see that $(Q,R) \in {\mathcal{X}}({\bf{A}}) \times {\mathcal{X}}({\bf{B}})$. Let $x,y \in A$ be such that $x \vee y \in Q$. Suppose that $x,y \notin Q$. Consider the filters $F_{x} = {\rm{Fig}}_{{\bf{A}}}(Q \cup \{x\})$ and $F_{y} = {\rm{Fig}}_{{\bf{A}}}(Q \cup \{y\})$. Then $Q \subset F_{x}$ and $Q \subset F_{y}$, and since $(Q,R)$ is maximal in $\mathcal{J}$, it follows that $f\left( F_{x}, R \right) \nsubseteq P$ and $f\left( F_{y}, R \right) \nsubseteq P$, i.e., there is $z \in f(F_{x}, R)$ such that $z \notin P$ and there is $t \in f(F_{y}, R)$ such that $t \notin P$. Then there exist $(f_{1}, r_{1}) \in F_{x} \times R$ and $(f_{2}, r_{2}) \in F_{y} \times R$ such that $f(f_{1}, r_{1}) \leq z$ and $f(f_{2}, r_{2}) \leq t$. So, there are $q_{1}, q_{2} \in Q$ such that $q_{1} \wedge x \leq f_{1}$ and $q_{2} \wedge y \leq f_{2}$. We take $q=q_{1} \wedge q_{2} \in Q$ and $r=r_{1} \wedge r_{2} \in R$. By Proposition \ref{propo_1}, we have $f(q \wedge x, r) \leq z$ and $f(q \wedge y, r) \leq t$. Thus, 
\begin{equation*}
f(q \wedge x, r) \vee f(q \wedge y, r) = f \left( (q \wedge x) \vee (q \wedge y), r \right) = f \left( q \wedge (x \vee y), r \right) \leq z \vee t.
\end{equation*}
As $q, x \vee y \in Q$, then $q \wedge (x \vee y) \in Q$ and $z \vee t \in f(Q,R)$. On the other hand, since $f(Q,R) \subseteq P$, we have $z \vee t \in P$. As $P$ is prime, $z \in P$ or $t \in P$ which is a contradiction. Then $Q \in {\mathcal{X}}({\bf{A}})$. The proof for $R \in {\mathcal{X}}({\bf{B}})$ is similar. It follows that there exist $Q \in {\mathcal{X}}({\bf{A}})$ and $R \in {\mathcal{X}}({\bf{B}})$ such that $G \subseteq Q$, $H \subseteq R$ and $f(Q,R) \subseteq P$.
\end{proof}

Let $\mathcal{M}$ be a FIDL-module. We define the following relations $R_{\mathcal{M}} \subseteq {\mathcal{X}}({\bf{A}}) \times {\mathcal{X}}({\bf{B}}) \times {\mathcal{X}}({\bf{A}})$ and $T_{\mathcal{M}} \subseteq {\mathcal{X}}({\bf{B}}) \times {\mathcal{X}}({\bf{A}}) \times {\mathcal{X}}({\bf{A}})$ by
\begin{equation}  \label{relation_R_{A}}
(Q,R,P) \in R_{\mathcal{M}} \Longleftrightarrow f(Q,R) \subseteq P,
\end{equation}
and 
\begin{equation} \label{relation_T_{A}}
(R,P,Q) \in T_{\mathcal{M}} \Longleftrightarrow i(R,P) \subseteq Q.
\end{equation}

\begin{lemma} \label{lem_1}
Let $\mathcal{M}$ be a FIDL-module. Let $x \in A$, $b \in B$ and $P \in {\mathcal{X}}({\bf{A}})$. Then: 
\begin{enumerate}
\item $f(x,b) \in P$ if and only if there exist $Q \in {\mathcal{X}}({\bf{A}})$ and $R \in {\mathcal{X}}({\bf{B}})$ such that $(Q,R,P) \in R_{\mathcal{M}}$, $x \in Q$ and $b \in R$.
\item $i(b,x) \in P$ if and only if for every $R \in {\mathcal{X}}({\bf{B}})$ and every $Q \in {\mathcal{X}}({\bf{A}})$, if $(R,P,Q) \in T_{\mathcal{M}}$ and $b \in R$, then $x \in Q$.
\end{enumerate}
\end{lemma}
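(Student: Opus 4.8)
The plan is to prove each equivalence by two inclusions, relying on Theorem~\ref{theo_1} for the nontrivial directions and on the monotonicity of $f$ and $i$ (Proposition~\ref{propo_1}) together with the definitions of $f(G,H)$, $i(H,G)$ and the relations $R_{\mathcal{M}}$, $T_{\mathcal{M}}$.

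For part (1), the right-to-left implication is straightforward: if $(Q,R,P)\in R_{\mathcal{M}}$ with $x\in Q$ and $b\in R$, then $[x)\times[b)\subseteq Q\times R$, and since $f(x,b)\in f([x),[b))\subseteq f(Q,R)\subseteq P$ by definition of $R_{\mathcal{M}}$ (equation~\eqref{relation_R_{A}}), we get $f(x,b)\in P$. For the left-to-right implication, suppose $f(x,b)\in P$. First I would check that $f([x),[b))\subseteq P$: indeed, if $y\in f([x),[b))$ then there are $g\geq x$, $h\geq b$ with $f(g,h)\leq y$, and by Proposition~\ref{propo_1}, $f(x,b)\leq f(g,h)\leq y$, so $y\in P$ because $P$ is increasing. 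Now apply Theorem~\ref{theo_1}(1) with $G=[x)$ and $H=[b)$: there exist $Q\in\mathcal{X}(\mathbf{A})$ and $R\in\mathcal{X}(\mathbf{B})$ with $[x)\subseteq Q$, $[b)\subseteq R$ and $f(Q,R)\subseteq P$, i.e.\ $(Q,R,P)\in R_{\mathcal{M}}$, $x\in Q$ and $b\in R$, as desired.

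For part (2), the left-to-right implication is the easy one this time: suppose $i(b,x)\in P$, and let $R\in\mathcal{X}(\mathbf{B})$, $Q\in\mathcal{X}(\mathbf{A})$ with $(R,P,Q)\in T_{\mathcal{M}}$ and $b\in R$; we must show $x\in Q$. By~\eqref{relation_T_{A}} we have $i(R,P)\subseteq Q$, so it suffices to see $x\in i(R,P)$, which holds because taking $(b,i(b,x))\in R\times P$ we have $i(b,x)\leq i(b,x)$, matching the defining condition of $i(R,P)$. For the converse, I would argue contrapositively: assume $i(b,x)\notin P$. The goal is to produce $R\in\mathcal{X}(\mathbf{B})$ and $Q\in\mathcal{X}(\mathbf{A})$ with $(R,P,Q)\in T_{\mathcal{M}}$, $b\in R$ and $x\notin Q$. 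The natural move is to consider $G=[x)$... but this is too strong; instead one wants $i(H,G)\subseteq P$ for suitable filters $H\ni b$ and $G$, then apply Theorem~\ref{theo_1}(2). The right choice is $H=[b)$ and $G$ a filter with $x\notin G$ chosen so that $i(H,G)\subseteq P$; concretely, one takes $G=\{z\in A : i(b,z)\in P\}$ and first verifies that $G$ is a filter (using (I1) and the fact that $P$ is a filter), that $x\notin G$ (this is exactly the hypothesis $i(b,x)\notin P$), and that $i([b),G)\subseteq P$ (if $g\leq i(h,w)$ with $h\geq b$ and $g\in G$, then by Proposition~\ref{propo_1} $i(b,g)\leq i(h,w)$... here one needs $i(b,w)\geq i(b,g)$—wait, one needs to push through $g\in G$, i.e.\ $i(b,g)\in P$, and $i(b,w)\geq$ something in $P$; the monotonicity gives $i(b,w)\geq i(b,i(h,w))\geq$ hmm).

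The main obstacle I anticipate is exactly this last verification in part (2): choosing the filter $G$ correctly and checking $i(H,G)\subseteq P$ so that Theorem~\ref{theo_1}(2) applies and separates $x$ from the resulting prime filter $Q$. The subtlety is that $i$ is antitone in its first argument and monotone in its second (Proposition~\ref{propo_1}), so the direction of inequalities must be tracked carefully; one likely needs the set $G=\{z\in A : i(b,z)\in P\}$ and the chain of inequalities $i(h,w)\geq g$ implies (by monotonicity in the second slot, since $i(h,w)\leq i(b,w)$ is \emph{false}—rather $i(b,w)$ relates to $i(h,w)$ via antitonicity, $b\leq h$ gives $i(h,w)\leq i(b,w)$) $g\leq i(h,w)\leq i(b,w)$, hence $i(b,w)\in P$ as $P$ is increasing and $g\in G$ means $i(b,g)\in P$—but we actually need $g\leq i(b,w)$ with $g\in G$, which does not immediately give $i(b,w)\in P$ unless $g\in P$. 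So in fact one should instead take $G$ to be the filter generated by $\{z : i(b,z)\in P\}$ together with ensuring $G\subseteq P$-compatibility; the cleanest route is to let $G=\{z\in A: i(b,z)\in P\}$, note $G$ is a filter with $x\notin G$, and verify directly that $i([b),G)\subseteq P$ by: $g\in G$, $h\geq b$, $g\leq i(h,w)$ $\Rightarrow$ $i(b,g)\leq i(b,i(h,w))$, and since $g\leq i(h,w)$ we also get, applying $i(b,-)$ monotone, a bound placing $i(b,w)$ above an element of $P$. Once this technical point is settled, Theorem~\ref{theo_1}(2) finishes the argument immediately.
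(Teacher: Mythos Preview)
Your argument for part (1) and for the forward direction of part (2) is correct and matches the paper's proof essentially verbatim.

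The genuine gap is in the contrapositive direction of part (2). Your plan is to choose a filter $G$ with $x\notin G$ and $i([b),G)\subseteq P$, then invoke Theorem~\ref{theo_1}(2). But that theorem, applied to $i([b),G)\subseteq P$, produces primes $R\supseteq [b)$ and $Q\supseteq G$ with $i(R,Q)\subseteq P$, i.e.\ $(R,Q,P)\in T_{\mathcal{M}}$. What you need is $(R,P,Q)\in T_{\mathcal{M}}$, that is, $i(R,P)\subseteq Q$: the given prime $P$ must sit in the \emph{middle} slot, not the right-hand one. Your setup puts $P$ in the wrong position, and there is no way to swap the last two coordinates after the fact. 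Moreover, even ignoring the slot issue, the theorem only guarantees $Q\supseteq G$, so $x\notin G$ does not by itself force $x\notin Q$. Your attempted verification that $i([b),G)\subseteq P$ also runs into trouble precisely because membership in $G$ records $i(b,g)\in P$, not $g\in P$, and there is no axiom relating $i(b,i(h,w))$ to $i(b,w)$.

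The paper's route avoids both problems at once: it keeps $P$ fixed in the middle by working directly with the filter $i([b),P)$. One checks $i([b),P)\cap (x]=\emptyset$ (using Proposition~\ref{propo_1} exactly as you did in part (1)), then the Prime Filter Theorem yields a prime $Q$ with $i([b),P)\subseteq Q$ and $x\notin Q$. \emph{Now} Theorem~\ref{theo_1}(2) is applied with $Q$ as the target prime, giving $R\supseteq [b)$ and $\hat P\supseteq P$ with $i(R,\hat P)\subseteq Q$; monotonicity then gives $i(R,P)\subseteq i(R,\hat P)\subseteq Q$, so $(R,P,Q)\in T_{\mathcal{M}}$ with $b\in R$ and $x\notin Q$. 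The key idea you are missing is to separate $x$ from a prime filter \emph{before} invoking Theorem~\ref{theo_1}(2), rather than hoping the theorem will do the separation for you.
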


\begin{proof}
$(1)$ Suppose $f(x,b) \in P$. We see that $f\left( [x), [b) \right) \subseteq P$. If $y \in f\left( [x), [b) \right)$, then there exists $(g,h) \in [x) \times [b)$ such that $f(g,h) \leq y$. So, $x \leq g$ and $b \leq h$, and by Proposition \ref{propo_1}, $f(x,b) \leq f(g,h) \leq y$. Since $P$ is a filter, $y \in P$ and $f\left( [x), [b) \right) \subseteq P$. So, by Theorem \ref{theo_1}, there exist $Q \in {\mathcal{X}}({\bf{A}})$ and $R \in {\mathcal{X}}({\bf{B}})$ such that $[x) \subseteq Q$, $[b) \subseteq R$ and $f(Q,R) \subseteq P$, i.e., $x \in Q$, $b \in R$ and $(Q,R,P) \in R_{\mathcal{M}}$. Conversely, if there exist $Q \in {\mathcal{X}}({\bf{A}})$ and $R \in {\mathcal{X}}({\bf{A}})$ such that $f(Q,R) \subseteq P$, $x \in Q$ and $b \in R$, because $(x,b) \in Q \times R$, we have $f(x,b) \in f(Q,R)$ and $f(x,b) \in P$.

$(2)$ Suppose $i(b,x) \in P$. Let $R \in {\mathcal{X}}(\textbf{B})$ and $Q \in {\mathcal{X}}(\textbf{A})$ be such that $i(R,P) \subseteq Q$ and $b \in R$. Then $(b, i(b,x)) \in R \times P$ and $x \in i(R,P)$. So, $x \in Q$. Reciprocally, suppose $i(b,x) \notin P$. We prove that $i\left( [b), P \right) \cap (x] = \emptyset$. Otherwise, there is $y \in i \left( [b), P \right)$ such that $y \in (x]$. Thus, there exists $(z,p) \in [b) \times P$ such that $p \leq i(z,y)$. Since $y \leq x$ and $b \leq z$, by Proposition \ref{propo_1}, we have $i(z,y) \leq i(b,x)$. Then $p \leq i(b,x)$ and $i(b,x) \in P$, which is a contradiction. So, $i\left( [b), P \right) \cap (x] = \emptyset$ and since $i\left( [b), P \right) \in {\rm{Fi}}(\textbf{A})$, by the Prime Filter Theorem there exists $Q \in {\mathcal{X}}(\textbf{A})$ such that $i \left( [b), P \right) \subseteq Q$ and $x \notin Q$. Then, by Theorem \ref{theo_1}, there exist $R \in {\mathcal{X}}(\textbf{B})$ and $\hat{P} \in {\mathcal{X}}(\textbf{A})$ such that $[b) \subseteq R$, $P \subseteq \hat{P}$ and $i(R, \hat{P}) \subseteq Q$. It is clear that $i(R,P) \subseteq i(R, \hat{P})$. Summarizing, there exist $R \in {\mathcal{X}}(\textbf{B})$ and $Q \in {\mathcal{X}}(\textbf{A})$ such that $(R,P,Q) \in T_{\mathcal{M}}$, $b \in R$ and $x \notin Q$, which contradicts the hypothesis. Therefore, $i(b,x) \in P$.
\end{proof}

Now, we introduce the concept of subalgebra of a FIDL-module.

\begin{definition}
Let $\mathcal{M}$ be a FIDL-module. Let $\hat{\bf{A}}$ be a bounded sublattice of $\bf{A}$ and $\hat{\bf{B}}$ a bounded sublattice of $\bf{B}$. 
\begin{itemize}
\item[(S1)] A structure $\langle \hat{\bf{A}}, \hat{\bf{B}}, f \rangle$ is called a {\rm{FDL-subalgebra of $\langle {\bf{A}}, {\bf{B}}, f \rangle$}}, if for every ${\hat{x}} \in {\hat{A}}$ and every ${\hat{b}} \in {\hat{B}}$, we have $f( \hat{x},\hat{b}) \in {\hat{A}}$. 
\item[(S2)] A structure $\langle \hat{\bf{A}}, \hat{\bf{B}}, i \rangle$ is called an {\rm{IDL-subalgebra of $\langle {\bf{A}}, {\bf{B}}, i \rangle$}}, if for every ${\hat{x}} \in {\hat{A}}$ and every ${\hat{b}} \in {\hat{B}}$, we have $i({\hat{b}},\hat{x}) \in {\hat{A}}$. 
\end{itemize}
Moreover, a structure $\hat{\mathcal{M}}=\langle \hat{\bf{A}}, \hat{\bf{B}}, f, i \rangle$ is called a {\rm{FIDL-subalgebra of $\mathcal{M}$}}, if $\langle \hat{\bf{A}}, \hat{\bf{B}}, f \rangle$ is a FDL-subalgebra and $\langle \hat{\bf{A}}, \hat{\bf{B}}, i \rangle$ is an IDL-subalgebra.
\end{definition}

We conclude this section with a characterization of FIDL-subalgebras by means of the relations defined in (\ref{relation_R_{A}}) and (\ref{relation_T_{A}}). 

\begin{theorem}
Let $\mathcal{M}$ be a FIDL-module. Let $\hat{\bf{A}}$ be a bounded sublattice of $\bf{A}$ and $\hat{\bf{B}}$ a bounded sublattice of $\bf{B}$. Then:
\begin{enumerate}
\item $\langle \hat{\bf{A}}, \hat{\bf{B}}, f \rangle$ is a FDL-subalgebra of $\langle {\bf{A}}, {\bf{B}}, f \rangle$ if and only if for all $P, Q, Q_{1} \in {\mathcal{X}}({\bf{A}})$ and for all $R_{1} \in {\mathcal{X}}({\bf{B}})$, if $(Q_{1}, R_{1}, P)\in R_{\mathcal{M}}$ and $P \cap \hat{A} \subseteq Q$, then there exist $Q_{2} \in {\mathcal{X}}({\bf{A}})$ and $R_{2} \in {\mathcal{X}}({\bf{B}})$ such that $Q_{1} \cap \hat{A} \subseteq Q_{2}$, $R_{1} \cap \hat{B} \subseteq R_{2}$ and $(Q_{2}, R_{2}, Q)\in R_{\mathcal{M}}$.

\item $\langle \hat{\bf{A}}, \hat{\bf{B}}, i \rangle$ is an IDL-subalgebra of $\langle {\bf{A}}, {\bf{B}}, i \rangle$ if and only if for all $P, Q, Q_{1} \in {\mathcal{X}}({\bf{A}})$ and for all $R_{1} \in {\mathcal{X}}({\bf{B}})$, if $(R_{1}, Q, Q_{1})\in T_{\mathcal{M}}$ and $P \cap \hat{A} \subseteq Q$, then there exist $Q_{2} \in {\mathcal{X}}({\bf{A}})$ and $R_{2} \in {\mathcal{X}}({\bf{B}})$ such that $Q_{2} \cap \hat{A} \subseteq Q_{1}$, $R_{1} \cap \hat{B} \subseteq R_{2}$ and $(R_{2}, P,Q_{2})\in T_{\mathcal{M}}$.
\end{enumerate}

We conclude that the structure $\hat{\mathcal{M}}=\langle \hat{\bf{A}}, \hat{\bf{B}}, f, i \rangle$ is a FIDL-subalgebra of $\mathcal{M}$ if and only if verifies the conditions $(1)$ and $(2)$.
\end{theorem}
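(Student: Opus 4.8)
The plan is to prove part (1) and then note that part (2) follows by a dual argument, the final claim about $\hat{\mathcal{M}}$ being immediate once both are established. For part (1), I would work with the reformulation of $f$ through the family $\mathcal{F}_{\mathbf{B}}$ from Remark~\ref{Fusion and impliction as unary operations}, but actually it is cleaner to argue directly with $f$ and Lemma~\ref{lem_1}. The bridge between the algebraic closure condition ``$f(\hat{x},\hat{b})\in\hat{A}$'' and the relational condition should be the observation that, via the embedding $\beta_{\hat{\mathbf{A}}}$ of $\hat{\mathbf{A}}$ into $\mathcal{C}(\mathcal{X}(\hat{\mathbf{A}}))$, membership of $f(\hat{x},\hat{b})$ in a prime filter of $\hat{\mathbf{A}}$ should be expressible purely in terms of $R_{\mathcal{M}}$ restricted appropriately. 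The prime filters of $\hat{\mathbf{A}}$ are exactly the sets of the form $P\cap\hat{A}$ for $P\in\mathcal{X}(\mathbf{A})$ (every prime filter of a sublattice extends to one of the whole lattice, and conversely the trace of a prime filter is prime), and similarly for $\hat{\mathbf{B}}$; this is the standard fact I would invoke up front.

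For the forward direction of (1), assume $\langle\hat{\mathbf{A}},\hat{\mathbf{B}},f\rangle$ is an FDL-subalgebra. Given $(Q_1,R_1,P)\in R_{\mathcal{M}}$ and $P\cap\hat{A}\subseteq Q$, I want $Q_2,R_2$ with $Q_1\cap\hat{A}\subseteq Q_2$, $R_1\cap\hat{B}\subseteq R_2$ and $f(Q_2,R_2)\subseteq Q$. The natural candidate is to take the filter of $\mathbf{A}$ generated by $Q_1\cap\hat{A}$ and the filter of $\mathbf{B}$ generated by $R_1\cap\hat{B}$, and check that $f$ of these lands inside $Q$; then apply Theorem~\ref{theo_1}(1) to pull out prime $Q_2,R_2$. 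To verify $f(\mathrm{Fig}_{\mathbf{A}}(Q_1\cap\hat{A}),\mathrm{Fig}_{\mathbf{B}}(R_1\cap\hat{B}))\subseteq Q$: an element here is $\geq f(\hat{g},\hat{h})$ for some $\hat{g}\in Q_1\cap\hat{A}$, $\hat{h}\in R_1\cap\hat{B}$ (using that $Q_1\cap\hat A$, $R_1\cap\hat B$ are closed under the meets of $\hat{\mathbf A},\hat{\mathbf B}$); since $\hat{g}\in Q_1$, $\hat{h}\in R_1$ and $f(Q_1,R_1)\subseteq P$, we get $f(\hat{g},\hat{h})\in P$; but $f(\hat{g},\hat{h})\in\hat{A}$ by the subalgebra hypothesis, so $f(\hat{g},\hat{h})\in P\cap\hat{A}\subseteq Q$, and $Q$ being a filter finishes it. Then Theorem~\ref{theo_1}(1) with $G=\mathrm{Fig}_{\mathbf{A}}(Q_1\cap\hat A)$, $H=\mathrm{Fig}_{\mathbf{B}}(R_1\cap\hat B)$, $P:=Q$ yields the required primes.

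For the converse, assume the relational condition and take $\hat{x}\in\hat{A}$, $\hat{b}\in\hat{B}$; I must show $f(\hat{x},\hat{b})\in\hat{A}$, i.e.\ that $f(\hat{x},\hat{b})$ is separated from no element of $\hat{A}$ that it should be below — more precisely, using $\beta_{\hat{\mathbf A}}$, it suffices to show that for all $P,Q\in\mathcal{X}(\mathbf{A})$ with $P\cap\hat A\subseteq Q$ one has $f(\hat x,\hat b)\in P\Rightarrow f(\hat x,\hat b)\in Q$ — this exactly says the set $\{P\cap\hat A: f(\hat x,\hat b)\in P\}$ is determined by an element of $\hat{\mathbf A}$ via the duality for $\hat{\mathbf A}$, hence $f(\hat x,\hat b)\in\hat A$. (Here I would spell out: the clopen increasing subsets of $\mathcal{X}(\hat{\mathbf A})$ are the traces, and an element of $\mathbf A$ lies in $\hat A$ iff its $\beta$-image is a union of $\beta_{\hat{\mathbf A}}$-preimages, equivalently iff it is compatible with the preorder $P\mapsto P\cap\hat A$.) So suppose $f(\hat x,\hat b)\in P$ and $P\cap\hat A\subseteq Q$. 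By Lemma~\ref{lem_1}(1) there are $Q_1\in\mathcal{X}(\mathbf A)$, $R_1\in\mathcal{X}(\mathbf B)$ with $(Q_1,R_1,P)\in R_{\mathcal{M}}$, $\hat x\in Q_1$, $\hat b\in R_1$. Apply the relational hypothesis: there are $Q_2,R_2$ with $Q_1\cap\hat A\subseteq Q_2$, $R_1\cap\hat B\subseteq R_2$, $(Q_2,R_2,Q)\in R_{\mathcal{M}}$. Since $\hat x\in Q_1\cap\hat A\subseteq Q_2$ and $\hat b\in R_1\cap\hat B\subseteq R_2$, Lemma~\ref{lem_1}(1) (the easy direction) gives $f(\hat x,\hat b)\in Q$, as needed.

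The main obstacle, and the step I would be most careful writing, is the converse's reduction of ``$f(\hat x,\hat b)\in\hat A$'' to the separation statement about pairs $(P,Q)$ with $P\cap\hat A\subseteq Q$: one needs the precise Priestley-duality characterization of when an element of $\mathbf A$ belongs to the sublattice $\hat A$ in terms of the quasi-order $P\preceq Q\iff P\cap\hat A\subseteq Q$ on $\mathcal{X}(\mathbf A)$ (equivalently, in terms of the continuous surjection $\mathcal X(\mathbf A)\to\mathcal X(\hat{\mathbf A})$ dual to the inclusion). Everything else is a routine combination of Theorem~\ref{theo_1}, Lemma~\ref{lem_1}, Proposition~\ref{propo_1}, and the fact that traces of prime filters are prime. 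Part (2) is proved by the same scheme with $i$, $T_{\mathcal{M}}$, Lemma~\ref{lem_1}(2) and the ideal-theoretic/order-reversing variants in place of their $f$-counterparts, noting that the universal quantifier in Lemma~\ref{lem_1}(2) matches the shape of the relational condition in (2); and the final equivalence for $\hat{\mathcal M}$ is the conjunction of (1) and (2) by the definition of FIDL-subalgebra.
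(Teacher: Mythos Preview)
Your proof of part~(1) is essentially the paper's: the forward direction is identical (filters generated by $Q_1\cap\hat A$ and $R_1\cap\hat B$, then Theorem~\ref{theo_1}(1)), and for the converse the paper carries out explicitly, via two applications of the Prime Filter Theorem, exactly the separation statement you package as the Priestley-duality criterion ``$a\in\hat A$ iff $\beta_{\mathbf A}(a)$ is an up-set for the quasi-order $P\preceq Q\Leftrightarrow P\cap\hat A\subseteq Q$''. That packaging is correct and arguably cleaner; just be sure to actually prove it (the paper's two PFT steps are the proof).

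The one place where your sketch is too optimistic is the forward direction of~(2). It is \emph{not} a straight dual of~(1): in~(1) you needed primes $Q_2,R_2$ enlarging given filters with $f(Q_2,R_2)\subseteq Q$, and Theorem~\ref{theo_1}(1) delivers this directly. In~(2) you need a prime $Q_2$ \emph{containing} $i(\mathrm{Fig}_{\mathbf B}(R_1\cap\hat B),P)$ while simultaneously satisfying the \emph{upper} constraint $Q_2\cap\hat A\subseteq Q_1$; Theorem~\ref{theo_1}(2) only enlarges the first two coordinates of $T_{\mathcal M}$, so it cannot produce such a $Q_2$ by itself. The paper handles this with a separate Zorn's-Lemma argument on the family $\{F\in\mathrm{Fi}(\mathbf A): i(\mathrm{Fig}_{\mathbf B}(R_1\cap\hat B),P)\subseteq F\text{ and }F\cap\hat A\subseteq Q_1\}$, checking by hand that a maximal element is prime (using that $\hat A$ is a sublattice), and only then invokes Theorem~\ref{theo_1}(2) to obtain $R_2$. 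Your phrase ``the same scheme with the ideal-theoretic/order-reversing variants'' does not cover this extra step; you should plan for it.
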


\begin{proof}
$(1)$ Let $P, Q, Q_{1} \in {\mathcal{X}}({\bf{A}})$ and $R_{1} \in {\mathcal{X}}({\bf{B}})$ be such that $(Q_{1}, R_{1}, P)\in R_{\mathcal{M}}$ and $P \cap \hat{A} \subseteq Q$. Then, $f(Q_{1}, R_{1}) \subseteq P$. Consider the filters $F_{Q_{1}} = {\rm{Fig}}_{\bf{A}} (Q_{1} \cap \hat{A})$ and $F_{R_{1}} = {\rm{Fig}}_{\bf{B}} (R_{1} \cap \hat{B})$. It follows that $f \left( F_{Q_{1}}, F_{R_{1}}\right) \subseteq Q$. Indeed, if $x \in f \left( F_{Q_{1}}, F_{R_{1}} \right)$, then there exists $(g,h) \in F_{Q_{1}} \times F_{R_{1}}$ such that $f(g,h) \leq x$. So, there is $q_{1} \in Q_{1} \cap \hat{A}$ such that $q_{1} \leq g$ and there is $r_{1} \in R_{1} \cap \hat{B}$ such that $r_{1} \leq h$. By Proposition \ref{propo_1}, $f(q_{1}, r_{1}) \leq f(g,h) \leq x$. So, $f(q_{1}, r_{1}) \in f(Q_{1}, R_{1})$ and $f(q_{1}, r_{1}) \in P$. On the other hand, since $\langle \hat{\mathbf{A}}, \hat{\mathbf{B}}, f \rangle$ is a FDL-subalgebra, $f(q_{1}, r_{1}) \in \hat{A}$. Thus, $f(q_{1}, r_{1}) \in Q$ and $x \in Q$. Therefore, $f \left( F_{Q_{1}}, F_{R_{1}}\right) \subseteq Q$ and by Theorem \ref{theo_1}, there exist $Q_{2} \in {\mathcal{X}}({\bf{A}})$ and $R_{2} \in {\mathcal{X}}({\bf{B}})$ such that $Q_{1} \cap \hat{A} \subseteq Q_{2}$, $R_{1} \cap \hat{B} \subseteq R_{2}$ and $(Q_{2}, R_{2}, Q)\in R_{\mathcal{M}}$.

Conversely, suppose there exist $\hat{x} \in \hat{A}$ and $\hat{b} \in \hat{B}$ such that $f(\hat{x}, \hat{b}) \notin \hat{A}$. We prove that ${\rm{Fig}}_{\bf{A}} ( [f(\hat{x}, \hat{b})) \cap \hat{A} ) \cap (f(\hat{x}, \hat{b})] = \emptyset$. Otherwise, there is $y \in A$ such that $y \in {\rm{Fig}}_{\bf{A}} ( [f(\hat{x}, \hat{b})) \cap \hat{A})$ and $y \leq f(\hat{x}, \hat{b})$. Then there exists $z \in [f(\hat{x}, \hat{b})) \cap \hat{A}$ such that $z \leq y$. It follows that $f(\hat{x}, \hat{b})=z$. Since $z \in \hat{A}$, we have $f(\hat{x}, \hat{b}) \in \hat{A}$ which is a contradiction. Then ${\rm{Fig}}_{\bf{A}} ( [f(\hat{x}, \hat{b})) \cap \hat{A} ) \cap (f(\hat{x}, \hat{b})] = \emptyset$ and by the Prime Filter Theorem there exists $Q \in {\mathcal{X}}({\bf{A}})$ such that $[f(\hat{x}, \hat{b})) \cap \hat{A}\subseteq Q$ and $f(\hat{x}, \hat{b}) \notin Q$. It is easy to see that $[f(\hat{x}, \hat{b})) \cap {\rm{Idg}}_{\bf{A}} ( Q^{c} \cap \hat{A} ) = \emptyset$. Then there exists $P \in {\mathcal{X}}({\bf{A}})$ such that $f(\hat{x}, \hat{b}) \in P$ and $P \cap {\rm{Idg}}_{\bf{A}} ( Q^{c} \cap \hat{A} ) = \emptyset$, i.e., $P \cap \hat{A} \subseteq Q$. Since $f(\hat{x}, \hat{b}) \in P$, by Lemma \ref{lem_1} there exist $Q_{1} \in {\mathcal{X}}({\bf{A}})$ and $R_{1} \in {\mathcal{X}}({\bf{B}})$ such that $f(Q_{1},R_{1}) \subseteq P$, $\hat{x} \in Q_{1}$ and $\hat{b} \in R_{1}$. So $(Q_{1},R_{1},P)\in R_{\mathcal{M}}$. By assumption, there exist $Q_{2} \in {\mathcal{X}}({\bf{A}})$ and $R_{2} \in {\mathcal{X}}({\bf{B}})$ such that $Q_{1} \cap \hat{A} \subseteq Q_{2}$, $R_{1} \cap \hat{B} \subseteq R_{2}$ and $f(Q_{2}, R_{2}) \subseteq Q$. Thus, $(Q_{2},R_{2},Q)\in R_{\mathcal{M}}$. Since $\hat{x} \in \hat{A}$ and $\hat{b} \in \hat{B}$, we have $\hat{x} \in Q_{2}$ and $\hat{b} \in R_{2}$. Hence, $f(\hat{x}, \hat{b}) \in f (Q_{2}, R_{2})$ and $f(\hat{x}, \hat{b}) \in Q$, which is a contradiction. Therefore, $f(\hat{x}, \hat{b}) \in \hat{A}$ and we conclude that $\langle \hat{\bf{A}}, \hat{\bf{B}}, f \rangle$ is a FDL-subalgebra.

$(2)$ Let $P, Q, Q_{1} \in {\mathcal{X}}(\textbf{A})$ and $R_{1} \in {\mathcal{X}}(\textbf{A})$ be such that $(R_{1}, Q,Q_{1})\in T_{\mathcal{M}}$ and $P \cap \hat{A} \subseteq Q$. So $i(R_{1}, Q) \subseteq Q_{1}$. We see that 
\begin{equation*}
{\rm{Fig}}_{\textbf{A}} ( i ( {\rm{Fig}}_{\textbf{B}} (R_{1} \cap \hat{B}), P ) \cap \hat{A} ) \subseteq Q_{1}.
\end{equation*}
If $x \in {\rm{Fig}}_{\textbf{A}} ( i ( {\rm{Fig}}_{\textbf{B}} (R_{1} \cap \hat{B}), P ) \cap \hat{A} )$, then there is $y \in i ( {\rm{Fig}}_{\textbf{B}} (R_{1} \cap \hat{B}), P ) \cap \hat{A}$ such that $y \leq x$. So, there are $r \in R_{1} \cap \hat{B}$ and $p \in P$ such that $p \leq i(r,y)$. Thus, $i(r,y) \in P$. On the other hand, as $y \in \hat{A}$, $r \in \hat{B}$ and $\langle \hat{\mathbf{A}}, \hat{\mathbf{B}}, i \rangle$ is an IDL-subalgebra, $i(r,y) \in \hat{A}$. Then $i(r,y) \in P \cap \hat{A}$ and $i(r,y) \in Q$. It follows that $y \in i(R_{1},Q)$ and $y \in Q_{1}$. Then $x \in Q_{1}$. Now, let us consider the family
\begin{equation*}
\mathcal{J} = \{ F \in {\rm{Fi}}(\textbf{A}) \colon i ( {\rm{Fig}}_{\textbf{B}} (R_{1} \cap \hat{B}), P ) \subseteq F \hspace{0.1cm} {\text{and}} \hspace{0.1cm}  F \cap \hat{A} \subseteq Q_{1} \}.
\end{equation*}
Then $\mathcal{J} \neq \emptyset$ and by Zorn's Lemma there exists an maximal element $Q_{2} \in \mathcal{J}$. We prove that $Q_{2} \in {\mathcal{X}}(\textbf{A})$. Let $x,y \in A$ be such that $x \vee y \in Q_{2}$ and suppose $x,y \notin Q_{2}$. We take the filters $F_{x} = {\rm{Fig}}_{\textbf{A}} ( Q_{2} \cup \{x\} )$ and $F_{y} = {\rm{Fig}}_{\textbf{A}} ( Q_{2} \cup \{y\} )$. Then $F_{x} \cap \hat{A} \nsubseteq Q_{1}$ and $F_{y} \cap \hat{A} \nsubseteq Q_{1}$, i.e., there exist $z \in F_{x} \cap \hat{A}$ and $t \in F_{y} \cap \hat{A}$ such that $z,t \notin Q_{1}$. So, there are $q_{1}, q_{2} \in Q_{2}$ such that $q_{1} \wedge x \leq z$ and $q_{2} \wedge y \leq t$. Then $(q_{1} \wedge q_{2}) \wedge (x \vee y) \leq z \vee t$ and $z \vee t \in Q_{2}$. Since $\hat{A}$ is a sublattice, $z \vee t \in Q_{2} \cap \hat{A}$ and $z \vee t \in Q_{1}$, which is a contradiction because $Q_{1}$ is prime and $z \vee t \notin Q_{1}$. Hence, $Q_{2} \in {\mathcal{X}}(\textbf{A})$. As $i ( {\rm{Fig}}_{\textbf{B}} (R_{1} \cap \hat{B}), P ) \subseteq Q_{2}$ and $F \cap \hat{A} \subseteq Q_{1}$, by Theorem \ref{theo_1} there exists $R_{2} \in {\mathcal{X}}(\textbf{B})$ such that $R_{1} \cap \hat{B} \subseteq R_{2}$ and $(R_{2},P,Q_{2})\in T_{\mathcal{M}}$.

Reciprocally, suppose there exist $\hat{x} \in \hat{A}$ and $\hat{b} \in \hat{B}$ such that $i(\hat{b}, \hat{x}) \notin \hat{A}$. In order to prove our claim, first we show that ${\rm{Idg}}_{\textbf{A}} ( (i(\hat{b}, \hat{x})] \cap \hat{A} ) \cap [i(\hat{b}, \hat{x})) = \emptyset$. If there is $y \in {\rm{Idg}}_{\textbf{A}} ( (i(\hat{b}, \hat{x})] \cap \hat{A} )$ such that $i(\hat{b}, \hat{x}) \leq y$, then there exists $z \in (i(\hat{b}, \hat{x})] \cap \hat{A}$ such that $y \leq z$. Thus, $i(\hat{b}, \hat{x})=z$ and $i(\hat{b}, \hat{x}) \in \hat{A}$, which is a contradiction. Then ${\rm{Idg}}_{\textbf{A}} ( (i(\hat{b}, \hat{x})] \cap \hat{A} ) \cap [i(\hat{b}, \hat{x})) = \emptyset$ and consequently from the Prime Filter Theorem, there exists $P \in {\mathcal{X}}(\textbf{A})$ such that $i(\hat{b}, \hat{x}) \in P$ and ${\rm{Idg}}_{\textbf{A}} ( (i(\hat{b}, \hat{x})] \cap \hat{A} ) \cap P = \emptyset$. It is easy to prove that $(i(\hat{b}, \hat{x})] \cap {\rm{Fig}}_{\textbf{A}}(P \cap \hat{A}) = \emptyset$. Then again by the Prime Filter Theorem, there is $Q \in {\mathcal{X}}(\textbf{A})$ such that $P \cap \hat{A} \subseteq Q$ and $i(\hat{b}, \hat{x}) \notin Q$. By Lemma \ref{lem_1}, there exist $R_{1} \in {\mathcal{X}}(\textbf{B})$ and $Q_{1} \in {\mathcal{X}}(\textbf{A})$ such that $i(R_{1},Q) \subseteq Q_{1}$, $\hat{b} \in R_{1}$ and $\hat{x} \notin Q_{1}$. So, by hypothesis, there exist $Q_{2} \in {\mathcal{X}}(\textbf{A})$ and $R_{2} \in {\mathcal{X}}(\textbf{B})$ such that $Q_{2} \cap \hat{A} \subseteq Q_{1}$, $R_{1} \cap \hat{B} \subseteq R_{2}$ and $i(R_{2}, P) \subseteq Q_{2}$. Thus, $(R_{2}, P, Q_{2})\in T_{\mathcal{M}}$. As $\hat{b} \in R_{1}$, we have $\hat{b} \in R_{2}$. On the other hand, as $i(\hat{b}, \hat{x}) \in P$, we have $\hat{x} \in i(R_{2}, P)$ and $\hat{x} \in Q_{2}$. Then $\hat{x} \in Q_{2} \cap \hat{A}$ and $\hat{x} \in Q_{1}$ which is a contradiction. Hence, $i(\hat{b}, \hat{x}) \in \hat{A}$ and $\langle \hat{\bf{A}}, \hat{\bf{B}}, i \rangle$ is an IDL-subalgebra.
\end{proof}

\section{Representation for FIDL-modules} \label{Representation of FIDL-modules}

The main purpose of this section is to show a representation theorem for FIDL-modules in terms of certain relational structures consisting of bi-posets endowed with two relations.

We start by defining a category whose objects are FIDL-modules. So, we need to describe first, the notion of homomorphism between FIDL-modules. Recall that for every pair of functions $\alpha \colon A \to \hat{A}$ and $\gamma \colon B \to \hat{B}$ we can consider the map $\alpha \times \gamma \colon A \times B \to \hat{A} \times \hat{B}$ which is defined by $\left( \alpha \times \gamma \right) (x,y) = \left( \alpha(x), \gamma(y) \right)$.

\begin{definition} \label{Definition FIDL-homomorphism}
Let $\mathcal{M}=\langle {\bf{A}}, {\bf{B}}, f, i \rangle$ and $\hat{\mathcal{M}}=\langle \hat{\bf{A}}, \hat{\bf{B}}, \hat{f}, \hat{i} \rangle$ be two FIDL-modules. We shall say that a pair $\left( \alpha, \gamma \right) \colon \mathcal{M} \to \hat{\mathcal{M}}$ is a {\rm{FIDL-homomorphism}}, if $\alpha \colon A \to \hat{A}$ and $\gamma \colon B \to \hat{B}$ are homomorphisms between bounded distributive lattices and the following diagrams commute:
\begin{displaymath}
\begin{tabular}{ccc}
\xymatrix{
A \times B \ar[r]^-{f} \ar[d]_-{\alpha \times \gamma} & A \ar[d]^-{\alpha} \\
\hat{A} \times \hat{B} \ar[r]_-{\hat{f}} & \hat{A}
} & & \xymatrix{
B \times A \ar[r]^-{i} \ar[d]_-{\gamma \times \alpha} & A \ar[d]^-{\alpha} \\
\hat{B} \times \hat{A} \ar[r]_-{\hat{i}} & \hat{A}
}
\end{tabular}
\end{displaymath}
\end{definition}

\begin{remark} \label{Remark_f and i}
Notice that from Remark \ref{Fusion and impliction as unary operations}, the diagrams of Definition \ref{Definition FIDL-homomorphism} are commutative if and only if for every $b \in B$, the following diagrams commute:
\begin{displaymath}
\begin{tabular}{ccc}
\xymatrix{
A  \ar[r]^-{f_{b}} \ar[d]_-{\alpha} & A \ar[d]^-{\alpha} \\
\hat{A}  \ar[r]_-{{\hat{f}}_{\gamma(b)}} & \hat{A}
} & & \xymatrix{
A  \ar[r]^-{i_{b}} \ar[d]_-{\alpha} & A \ar[d]^-{\alpha} \\
\hat{A}  \ar[r]_-{{\hat{i}}_{\gamma(b)}} & \hat{A}
}
\end{tabular}
\end{displaymath}
We stress that for the rest of the paper we will use the functions ${\hat{f}}_{\gamma(b)}$ and ${\hat{i}}_{\gamma(b)}$ as well as the notation of Definition \ref{Definition FIDL-homomorphism} indistinctly.
\end{remark}

\begin{example}
Let $\mathcal{M}$ be a FIDL-module. Let ${\bf{C}}$ be a bounded distributive lattice and $h \colon C \to B$ be a lattice homomorphism. If we define the functions $\hat{f} \colon A \times C \to A$ by $\hat{f}(x,c)=f(x,h(c))$ and $\hat{i} \colon C \times A \to A$ by $\hat{i}(c,x)=i(h(c),x)$, then the structure $\mathcal{N} = \langle {\bf{A}}, {\bf{C}}, \hat{f}, \hat{i} \rangle$ is a FIDL-module and the pair $\left( id_{A}, h \right) \colon \mathcal{N} \to \mathcal{M}$ is a FIDL-homomorphism.
\end{example}

Let $\mathcal{M}=\langle {\bf{A}}, {\bf{B}}, f, i \rangle$, ${\hat{\mathcal{M}}}=\langle \hat{\bf{A}}, \hat{\bf{B}}, \hat{f}, \hat{i} \rangle$ and ${\bar{\mathcal{M}}} = \langle \bar{\bf{A}}, \bar{\bf{B}}, \bar{f}, \bar{i} \rangle$ be FIDL-modules. Consider the FIDL-homomorphisms $\left( \alpha, \gamma \right) \colon \mathcal{M} \to {\hat{\mathcal{M}}}$ and $\left( \delta, \lambda \right) \colon {\hat{\mathcal{M}}} \to {\bar{\mathcal{M}}}$. Then we define the composition $\left( \delta, \lambda \right)  \left( \alpha, \gamma \right) \colon \mathcal{M} \to {\bar{\mathcal{M}}}$ as the pair $\left( \delta \alpha, \lambda \gamma \right)$. It is clear that the FIDL-homomorphisms between FIDL-modules are closed by composition and that such a composition is associative. Moreover, we may define the identity of $\mathcal{M}$ as the pair $\left( id_{A}, id_{B} \right)$. So, we obtain that the class $\mathsf{FIMod}$ of FIDL-modules as objects and FIDL-homomorphisms as morphisms is a category. 

The following technical result will be useful later.

\begin{lemma} \label{Isos FDL}
Let $\mathcal{M}=\langle {\bf{A}}, {\bf{B}}, f, i \rangle$ and $\hat{\mathcal{M}}=\langle \hat{\bf{A}}, \hat{\bf{B}}, \hat{f}, \hat{i} \rangle$ be two FIDL-modules and $\left( \alpha, \gamma \right) \colon \mathcal{M} \to \hat{\mathcal{M}}$ a FIDL-homomorphism. Then the following conditions are equivalent: 
\begin{enumerate}
\item $\left( \alpha, \gamma \right)$ is an isomorphism in the category ${\mathsf{FIMod}}$, 
\item $\alpha$ and $\gamma$ are isomorphisms of bounded distributive lattices.
\end{enumerate}
\end{lemma}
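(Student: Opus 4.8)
The plan is to prove the equivalence by showing both implications, where the substantial content lies in $(2) \Rightarrow (1)$ since $(1) \Rightarrow (2)$ is essentially a formal consequence of how the category $\mathsf{FIMod}$ is built. For $(1) \Rightarrow (2)$: if $(\alpha,\gamma)$ is an isomorphism in $\mathsf{FIMod}$, there is an inverse FIDL-homomorphism $(\alpha',\gamma') \colon \hat{\mathcal{M}} \to \mathcal{M}$ with $(\alpha',\gamma')(\alpha,\gamma) = (id_A, id_B)$ and $(\alpha,\gamma)(\alpha',\gamma') = (id_{\hat A}, id_{\hat B})$. Since composition in $\mathsf{FIMod}$ is defined componentwise as $(\delta,\lambda)(\alpha,\gamma) = (\delta\alpha, \lambda\gamma)$ and the identity on $\mathcal{M}$ is $(id_A, id_B)$, these identities unwind to $\alpha'\alpha = id_A$, $\alpha\alpha' = id_{\hat A}$, $\gamma'\gamma = id_B$, $\gamma\gamma' = id_{\hat B}$. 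Hence $\alpha$ and $\gamma$ are bijective bounded lattice homomorphisms, and a bijective homomorphism of bounded distributive lattices is automatically a lattice isomorphism (its set-theoretic inverse preserves $\vee, \wedge, 0, 1$). This gives $(2)$.

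For $(2) \Rightarrow (1)$: assume $\alpha \colon A \to \hat A$ and $\gamma \colon B \to \hat B$ are isomorphisms of bounded distributive lattices. Let $\alpha^{-1}$ and $\gamma^{-1}$ denote their inverses, which are again bounded lattice homomorphisms. The candidate inverse morphism is the pair $(\alpha^{-1}, \gamma^{-1}) \colon \hat{\mathcal{M}} \to \mathcal{M}$. First I would verify that this pair is actually a FIDL-homomorphism, i.e., that the two diagrams in Definition \ref{Definition FIDL-homomorphism} commute for $(\alpha^{-1}, \gamma^{-1})$. The point is that the commuting square $\alpha \circ f = \hat f \circ (\alpha \times \gamma)$, which holds by hypothesis on $(\alpha,\gamma)$, can be rearranged: composing on the left with $\alpha^{-1}$ and on the right with $\alpha^{-1} \times \gamma^{-1}$ yields $f \circ (\alpha^{-1} \times \gamma^{-1}) = \alpha^{-1} \circ \hat f$, which is exactly the fusion square for $(\alpha^{-1},\gamma^{-1})$. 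The analogous manipulation with the implication square $\alpha \circ i = \hat i \circ (\gamma \times \alpha)$ gives $i \circ (\gamma^{-1} \times \alpha^{-1}) = \alpha^{-1} \circ \hat i$. Here one uses that $\alpha \times \gamma$ is a bijection with inverse $\alpha^{-1} \times \gamma^{-1}$, and likewise for $\gamma \times \alpha$. Thus $(\alpha^{-1}, \gamma^{-1})$ is a legitimate morphism of $\mathsf{FIMod}$.

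Finally I would check that $(\alpha^{-1}, \gamma^{-1})$ is a two-sided inverse of $(\alpha, \gamma)$ in the category. Using the componentwise composition law, $(\alpha^{-1}, \gamma^{-1})(\alpha, \gamma) = (\alpha^{-1}\alpha, \gamma^{-1}\gamma) = (id_A, id_B)$, which is the identity of $\mathcal{M}$, and symmetrically $(\alpha, \gamma)(\alpha^{-1}, \gamma^{-1}) = (\alpha\alpha^{-1}, \gamma\gamma^{-1}) = (id_{\hat A}, id_{\hat B})$, the identity of $\hat{\mathcal{M}}$. Hence $(\alpha,\gamma)$ is an isomorphism in $\mathsf{FIMod}$, proving $(1)$.

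The only mildly delicate point — and the one I would be careful to state explicitly rather than grind through — is the diagram-chasing in the second implication: one must confirm that the set-theoretic inverse of $\alpha \times \gamma$ (respectively $\gamma \times \alpha$) is precisely $\alpha^{-1} \times \gamma^{-1}$ (respectively $\gamma^{-1} \times \alpha^{-1}$), so that the hypothesized commutativity transfers cleanly to the inverse pair. Everything else is a routine consequence of the definitions of composition and identity in $\mathsf{FIMod}$ already recorded in the excerpt. By Remark \ref{Remark_f and i} one could equivalently phrase the verification in terms of the unary operations $f_b$ and $i_b$, but the direct diagram argument is shortest.
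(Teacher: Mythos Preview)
Your proof is correct and follows essentially the same approach as the paper: for $(2)\Rightarrow(1)$ you verify that $(\alpha^{-1},\gamma^{-1})$ is a FIDL-homomorphism by conjugating the commuting squares for $(\alpha,\gamma)$, exactly as the paper does, and then conclude it is a two-sided inverse via the componentwise composition law. Your treatment of $(1)\Rightarrow(2)$ is slightly more explicit than the paper's one-word ``Immediate'', but the argument is the same.
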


\begin{proof}
$(1) \Rightarrow (2)$ Immediate.

$(2) \Rightarrow (1)$ Let us assume that $\alpha$ and $\gamma$ are isomorphism of bounded distributive lattices. We will show that the pair $\left( {\alpha}^{-1}, {\gamma}^{-1} \right) \colon \hat{\mathcal{M}} \to \mathcal{M}$ is a FIDL-homomorphism. Since ${\alpha}^{-1}$ and ${\gamma}^{-1}$ are isomorphisms of bounded distributive lattices, only remains to check the commutativity of the following diagram:
\begin{displaymath}
\xymatrix{
\hat{A} \times \hat{B} \ar[r]^-{{\hat{f}}} \ar[d]_-{{\alpha}^{-1} \times {\gamma}^{-1}} & \hat{A} \ar[d]^-{{\alpha}^{-1}} \\
A \times B \ar[r]_-{f} & A
}
\end{displaymath}
By hypothesis, we have ${\hat{f}} \left( \alpha \times \gamma \right) = \alpha f$. So, ${\alpha}^{-1} {\hat{f}} \left( \alpha \times \gamma \right) = f$ and 
\begin{equation*}
f \left( {\alpha}^{-1} \times {\gamma}^{-1} \right) = {\alpha}^{-1} {\hat{f}} \left( {\alpha} \times {\gamma} \right) \left( {\alpha}^{-1} \times {\gamma}^{-1} \right) = {\alpha}^{-1} {\hat{f}} \left( id_{\hat{A} \times \hat{B}} \right) = {\alpha}^{-1} {\hat{f}},
\end{equation*}
i.e., $f \left( {\alpha}^{-1} \times {\gamma}^{-1} \right) = {\alpha}^{-1} {\hat{f}}$. Similarly, the commutativity of the following diagram is easily verified:
\begin{displaymath}
\xymatrix{
\hat{B} \times \hat{A} \ar[r]^-{{\hat{i}}} \ar[d]_-{{\gamma}^{-1} \times {\alpha}^{-1}} & \hat{A} \ar[d]^-{{\alpha}^{-1}} \\
B \times A \ar[r]_-{i} & A
}
\end{displaymath}
By definition of the composition in ${\mathsf{FIMod}}$, it follows that $\left( \alpha, \gamma \right)^{-1} = \left( {\alpha}^{-1}, {\gamma}^{-1} \right)$ and $\left( \alpha, \gamma \right)$ is an isomorphism in the category ${\mathsf{FIMod}}$.
\end{proof}

Now, we introduce the class of relational structures needed to develop our representation theorem as well as the notion of morphisms between them.

\begin{definition}   \label{definition FI-frames}
Let $\langle X, \leq_{X} \rangle$ and $\langle Y, \leq_{Y} \rangle$ be two posets. A structure $\langle X, Y, \leq_{X}, \leq_{Y}, R \rangle$ is called a {\rm{F-frame}}, if $R \subseteq X \times Y \times X$ is a relation such that:
\begin{equation}   \label{condition_R}
\text{if} \hspace{0.1cm} (x,y,z) \in R, \bar{x} \leq_{X} x, \bar{y} \leq_{Y} y \hspace{0.1cm} \text{and} \hspace{0.1cm} z \leq_{X} \bar{z}, \hspace{0.1cm} \text{then} \hspace{0.1cm} (\bar{x},\bar{y},\bar{z}) \in R. 
\end{equation}
A structure $\langle X, Y, \leq_{X}, \leq_{Y}, T \rangle$ is called an {\rm{I-frame}}, if $T \subseteq Y \times X \times X$ is a relation such that:
\begin{equation}   \label{condition_T}
\text{if} \hspace{0.1cm} (y,x,z) \in T, \bar{y} \leq_{Y} y, \bar{x} \leq_{X} x \hspace{0.1cm} \text{and} \hspace{0.1cm} z \leq_{X} \bar{z}, \hspace{0.1cm} \text{then} \hspace{0.1cm} (\bar{y},\bar{x},\bar{z}) \in T. 
\end{equation}
Moreover, a structure $\mathcal{F}=\langle X, Y, \leq_{X}, \leq_{Y}, R, T \rangle$ is called a {\rm{FI-frame}}, if $\langle X, Y, \leq_{X}, \leq_{Y}, R \rangle$ is a F-frame and $\langle X, Y, \leq_{X}, \leq_{Y}, T \rangle$ is an I-frame.
\end{definition}

\begin{definition} \label{Definition FI-morphism}
Let $\mathcal{F}$ and $\hat{\mathcal{F}}$ be two FI-frames. We shall say that a pair $\left( g, h \right) \colon \mathcal{F} \to \hat{\mathcal{F}}$ is a {\rm{FI-morphism}}, if $g \colon X \to \hat{X}$ and $h \colon Y \to \hat{Y}$ are morphisms between posets and the following conditions hold:
\begin{itemize}
\item[(M1)] If $(x,y,z) \in R$, then $(g(x), h(y), g(z)) \in \hat{R}$.
\item[(M2)] If $(\bar{x}, \bar{y}, g(z)) \in \hat{R}$, then there exist $x \in X$ and $y \in Y$ such that $(x,y,z) \in R$, $\bar{x} \leq_{\hat{X}} g(x)$ and $\bar{y} \leq_{\hat{Y}} h(y)$.
\item[(N1)] If $(x,y,z) \in T$, then $(h(x), g(y), g(z)) \in \hat{T}$.
\item[(N2)] If $(\bar{x}, g(y), \bar{z}) \in \hat{T}$, then there exist $x \in Y$ and $z \in X$ such that $(x,y,z) \in T$, $\bar{x} \leq_{\hat{Y}} h(x)$ and $g(z) \leq_{\hat{X}} \bar{z}$.
\end{itemize}
\end{definition}

The composition of FI-morphisms is defined component-wise. It is clear from Definition \ref{Definition FI-morphism}, that such a composition is closed and associative and for every FI-Frame $\mathcal{F}$, the identity arrow is given by the pair $(id_{X},id_{Y})$. We write $\mathsf{FIFram}$ for the category of FI-frames and FI-morphisms.

The following result is similar to Lemma \ref{Isos FDL} and will be useful at the moment of proving the main theorem of this section.

\begin{lemma} \label{Isos FIF}
Let $\mathcal{F}$ and $\hat{\mathcal{F}}$ be two FI-frames and $\left( g, h \right) \colon \mathcal{F} \to \hat{\mathcal{F}}$ a FI-morphism. Then the following conditions are equivalent: 
\begin{enumerate}
\item $\left( g, h \right)$ is an isomorphism in the category ${\mathsf{FIFram}}$, 
\item $g$ and $h$ are isomorphisms of posets.
\end{enumerate}
\end{lemma}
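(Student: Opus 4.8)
The plan is to prove $(1) \Rightarrow (2)$ trivially and then focus on $(2) \Rightarrow (1)$, following the same strategy used in Lemma \ref{Isos FDL}. Assume $g$ and $h$ are isomorphisms of posets; then in particular they are order-reflecting bijections, and $g^{-1}, h^{-1}$ are again morphisms between posets. To conclude that $(g,h)$ is an isomorphism in $\mathsf{FIFram}$, it suffices to check that the pair $(g^{-1}, h^{-1}) \colon \hat{\mathcal{F}} \to \mathcal{F}$ is a FI-morphism, i.e., that it satisfies (M1), (M2), (N1), (N2); once that is done, $(g^{-1},h^{-1})$ is a two-sided inverse of $(g,h)$ by the component-wise definition of composition, and we are done.

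For (M1) applied to $(g^{-1},h^{-1})$: suppose $(\bar{x},\bar{y},\bar{z}) \in \hat{R}$. Write $\bar{x}=g(x)$, $\bar{y}=h(y)$, $\bar{z}=g(z)$ with $x=g^{-1}(\bar{x})$, etc. Apply (M2) for $(g,h)$ to the triple $(g(x),h(y),g(z))=(\bar{x},\bar{y},g(z)) \in \hat{R}$: this yields $x' \in X$, $y' \in Y$ with $(x',y',z)\in R$, $g(x) \leq_{\hat{X}} g(x')$ and $h(y) \leq_{\hat{Y}} h(y')$. Since $g$ and $h$ reflect the order, $x \leq_X x'$ and $y \leq_Y y'$; also $z \leq_X z$. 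Now apply the frame condition \eqref{condition_R} to $(x',y',z) \in R$ with $x \leq_X x'$, $y \leq_Y y'$, $z \leq_X z$ to get $(x,y,z) \in R$, that is, $(g^{-1}(\bar{x}), h^{-1}(\bar{y}), g^{-1}(\bar{z})) \in R$, which is exactly (M1) for $(g^{-1},h^{-1})$. For (M2) applied to $(g^{-1},h^{-1})$: suppose $(\hat{x},\hat{y},g^{-1}(\hat{z})) \in R$ for some $\hat{x},\hat{y} \in \hat{X}$-preimages — more precisely, suppose $(a,b,g^{-1}(c)) \in R$ with $a\in X$, $b\in Y$, $c\in\hat X$; applying (M1) for $(g,h)$ gives $(g(a), h(b), g(g^{-1}(c)))=(g(a),h(b),c) \in \hat{R}$, and then $x := g(a) = g(a)$, $y := h(b)$ witness (M2) with the inequalities being equalities $g^{-1}(\cdots) \leq g^{-1}(\cdots)$ up to applying $g^{-1},h^{-1}$; one checks the required inequalities hold reflexively. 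The arguments for (N1) and (N2) are entirely analogous, using condition \eqref{condition_T} in place of \eqref{condition_R} and the clauses (N1), (N2) for $(g,h)$.

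The main (and essentially only) obstacle is bookkeeping: making sure the direction of each inequality and each relation is correct when transporting (M2) and (N2) back through $g^{-1}$ and $h^{-1}$, and invoking the frame conditions \eqref{condition_R} and \eqref{condition_T} at the right moment to "close up" the existential witnesses into the actual point we need. There is no deep content — the key facts used are that an order-isomorphism of posets both preserves and reflects $\leq$, and that F-frames and I-frames are closed under the monotonicity-type conditions \eqref{condition_R} and \eqref{condition_T}. Once (M1), (M2), (N1), (N2) are verified for $(g^{-1},h^{-1})$, associativity and the component-wise identity arrows (already noted after Definition \ref{Definition FI-morphism}) give that $(g,h) \circ (g^{-1},h^{-1})$ and $(g^{-1},h^{-1}) \circ (g,h)$ are the respective identities, so $(g,h)$ is an isomorphism in $\mathsf{FIFram}$.
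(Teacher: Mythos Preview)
Your proposal is correct and follows essentially the same route as the paper: both directions are handled identically, and for $(2)\Rightarrow(1)$ you verify that $(g^{-1},h^{-1})$ satisfies (M1) by invoking (M2) for $(g,h)$ together with the frame condition \eqref{condition_R}, and (M2) by invoking (M1) for $(g,h)$, just as the paper does. The only cosmetic difference is that the paper wraps the (M1) verification in an unnecessary proof by contradiction, whereas you argue directly; your phrasing of (M2) is a bit garbled but the content is right.
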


\begin{proof}
$(1) \Rightarrow (2)$ Immediate.

$(2) \Rightarrow (1)$ Since $g$ and $h$ are isomorphisms of posets, then there exist $g^{-1}$ and $h^{-1}$. It is clear that $(g,h)^{-1} = (g^{-1}, h^{-1})$. We need to check that $(g^{-1}, h^{-1}) \colon \hat{\mathcal{F}} \to \mathcal{F}$ is a FI-morphism. We prove (M1). Let $(\bar{x}, \bar{y}, \bar{z}) \in \hat{X} \times \hat{Y} \times \hat{X}$ such that $(\bar{x}, \bar{y}, \bar{z}) \in \hat{R}$ and suppose that $(g^{-1}(\bar{x}), h^{-1}(\bar{y}), g^{-1}(\bar{z})) \notin R$. Due to $(\bar{x}, \bar{y}, g(g^{-1}(\bar{z}))) \in \hat{R}$ and $(g,h)$ is a FI-morphism, there exist $x \in X$ and $y \in Y$ such that $(x,y,g^{-1}(\bar{z})) \in R$, $\bar{x} \leq_{\hat{X}} g(x)$ and $\bar{y} \leq_{\hat{Y}} h(y)$. On the other hand, since $g^{-1}$ and $h^{-1}$ are monotone, we have $g^{-1}(\bar{x}) \leq_{X} x$, $h^{-1}(\bar{y}) \leq_{Y} y$ and $g^{-1}(\bar{z}) \leq_{X} g^{-1}(\bar{z})$. It follows, by (\ref{condition_R}), that $(g^{-1}(\bar{x}), h^{-1}(\bar{y}), g^{-1}(\bar{z})) \in R$ which is a contradiction. We prove (M2). Let $(x,y,g^{-1}(\bar{z})) \in R$. As $x=g^{-1}(g(x))$ and $y=h^{-1}(h(y))$, then $(g^{-1}(g(x)), h^{-1}(h(y)), g^{-1}(\bar{z})) \in R$. Since $(g,h)$ is a FI-morphism, $(g(x), h(y), \bar{z}) \in \hat{R}$. Conditions (N1) and (N2) can be verified analogously. 
\end{proof}

It is the moment to show how we build our representation. Let $\mathcal{F}=\langle X, Y, \leq_{X}, \leq_{Y}, R, T \rangle$ be a FI-frame. Then it follows that $\langle \mathcal{P}_{i}(X), \cup, \cap, \emptyset, X \rangle$ and $\langle \mathcal{P}_{i}(Y), \cup, \cap, \emptyset, Y \rangle$ are bounded distributive lattices. Let $U \in \mathcal{P}_{i}(X)$ and $V \in \mathcal{P}_{i}(Y)$, and let us to consider the following subsets of $X$:  
\begin{equation}    \label{f_in P}
f_{\mathcal{F}}(U,V)=\{ z \in X \colon \exists (x,y) \in U \times V {\hspace{0.1cm}} {\text{such that}} {\hspace{0.1cm}} (x,y,z) \in R \}
\end{equation}
and 
\begin{equation}  \label{i_in P}
i_{\mathcal{F}}(V,U)=\{ y \in X \colon \forall x \in Y, \forall z \in X {\hspace{0.05cm}} (((x,y,z) \in T {\hspace{0.1cm}} {\text{and}} {\hspace{0.1cm}} x \in V) {\hspace{0.1cm}} {\text{implies}} {\hspace{0.1cm}} z \in U) \}.
\end{equation}

It is easy to prove that $f_{\mathcal{F}}(U,V), i_{\mathcal{F}}(V,U) \in \mathcal{P}_{i}(X)$.

The proof of the following two results are routine so the details are left to the reader.

\begin{lemma} \label{representation}
Let $\mathcal{F}$ be a FI-frame. Then the structure 
\begin{equation*}
\mathcal{M}_{\mathcal{F}}=\langle \mathcal{P}_{i}(X), \mathcal{P}_{i}(Y), f_{\mathcal{F}}, i_{\mathcal{F}} \rangle
\end{equation*}
is a FIDL-module, where $f_{\mathcal{F}}$ and $i_{\mathcal{F}}$ are given by \ref{f_in P} and \ref{i_in P}, respectively.
\end{lemma}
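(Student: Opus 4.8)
The plan is to verify directly that $\mathcal{M}_{\mathcal{F}}$ satisfies all the defining conditions of a FIDL-module from Definition \ref{def_FIDL-modules}, namely (F1)--(F4) for $f_{\mathcal{F}}$ and (I1)--(I3) for $i_{\mathcal{F}}$; since $\langle \mathcal{P}_{i}(X), \cup, \cap, \emptyset, X \rangle$ and $\langle \mathcal{P}_{i}(Y), \cup, \cap, \emptyset, Y \rangle$ are already known to be bounded distributive lattices, and since we are told $f_{\mathcal{F}}(U,V), i_{\mathcal{F}}(V,U) \in \mathcal{P}_{i}(X)$, this is the only thing left to check. First I would handle the fusion axioms. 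For (F1) and (F2), the equalities $f_{\mathcal{F}}(U_{1} \cup U_{2}, V) = f_{\mathcal{F}}(U_{1},V) \cup f_{\mathcal{F}}(U_{2},V)$ and $f_{\mathcal{F}}(U, V_{1} \cup V_{2}) = f_{\mathcal{F}}(U,V_{1}) \cup f_{\mathcal{F}}(U,V_{2})$ follow immediately by unwinding the existential quantifier defining $f_{\mathcal{F}}$ in (\ref{f_in P}): a witness $(x,y)$ with $x \in U_{1} \cup U_{2}$ lies in exactly one of the two pieces, and conversely. Axioms (F3) and (F4) are trivial since $f_{\mathcal{F}}(\emptyset, V) = \emptyset = f_{\mathcal{F}}(U, \emptyset)$ because there is no witness $(x,y) \in \emptyset \times V$ or $(x,y) \in U \times \emptyset$.

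Next I would treat the implication axioms, which are dual and involve the universal quantifier in (\ref{i_in P}). For (I3), $i_{\mathcal{F}}(V, X) = X$ is immediate: the conclusion $z \in X$ of the implication in (\ref{i_in P}) always holds. For (I1), I would show $i_{\mathcal{F}}(V, U_{1} \cap U_{2}) = i_{\mathcal{F}}(V, U_{1}) \cap i_{\mathcal{F}}(V, U_{2})$ using the fact that ``$z \in U_{1} \cap U_{2}$'' is equivalent to ``$z \in U_{1}$ and $z \in U_{2}$'', which lets the universally quantified implication split across the conjunction. Similarly, for (I2), $i_{\mathcal{F}}(V_{1} \cup V_{2}, U) = i_{\mathcal{F}}(V_{1}, U) \cap i_{\mathcal{F}}(V_{2}, U)$ follows because the hypothesis ``$x \in V_{1} \cup V_{2}$'' splits as a disjunction, and an implication with a disjunctive hypothesis is equivalent to the conjunction of the two implications.

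The only genuinely non-routine point, and the one I would present with a little care, is the monotonicity/closure claim that $f_{\mathcal{F}}(U,V)$ and $i_{\mathcal{F}}(V,U)$ are indeed increasing subsets of $X$ — that is, that they land in $\mathcal{P}_{i}(X)$. This is where the frame conditions (\ref{condition_R}) and (\ref{condition_T}) are used. For $f_{\mathcal{F}}$: if $z \in f_{\mathcal{F}}(U,V)$ via a witness $(x,y,z) \in R$ and $z \leq_{X} \bar{z}$, then taking $\bar{x} = x$ and $\bar{y} = y$ in (\ref{condition_R}) gives $(x,y,\bar{z}) \in R$, so $\bar{z} \in f_{\mathcal{F}}(U,V)$. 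For $i_{\mathcal{F}}$: if $y \in i_{\mathcal{F}}(V,U)$ and $y \leq_{X} \bar{y}$, one must show $\bar{y} \in i_{\mathcal{F}}(V,U)$; given any $(x, \bar{y}, z) \in T$ with $x \in V$, condition (\ref{condition_T}) applied with the middle coordinate $\bar{y}$ and $y \leq_{X} \bar{y}$ (and identities in the other slots) yields $(x, y, z) \in T$, whence $z \in U$ by the hypothesis on $y$. Since the statement already grants us this closure fact, in the write-up I would simply note it and otherwise present the axiom verifications above as the essentially mechanical checks they are; I do not expect any serious obstacle, only bookkeeping with quantifiers.
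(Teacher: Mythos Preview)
Your proposal is correct and matches the paper's approach: the paper explicitly declares this result to be routine and leaves the details to the reader, and your outline is precisely the routine verification of (F1)--(F4) and (I1)--(I3) that is intended. Your extra care in explaining why $f_{\mathcal{F}}(U,V)$ and $i_{\mathcal{F}}(V,U)$ are increasing via the frame conditions (\ref{condition_R}) and (\ref{condition_T}) is exactly the point the paper alludes to just before stating the lemma.
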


\begin{lemma} \label{Representation of a FI-frame}
Let $ \mathcal{M}$ be a FIDL-module. Then the structure 
\begin{equation*}
\mathcal{F}_{\mathcal{M}}=\langle {\mathcal{X}}({\bf{A}}), {\mathcal{X}}({\bf{B}}), \subseteq_{\bf{A}}, \subseteq_{\bf{B}}, R_{\mathcal{M}}, T_{\mathcal{M}} \rangle
\end{equation*}
is a FI-frame, where $R_{\mathcal{M}}$ and $T_{\mathcal{M}}$ are given by \ref{relation_R_{A}} and \ref{relation_T_{A}}, respectively.
\end{lemma}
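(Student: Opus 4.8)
The plan is to check, straight from the definitions, the two conditions that turn $\mathcal{F}_{\mathcal{M}}$ into a FI-frame. First, $\langle {\mathcal{X}}({\bf{A}}), \subseteq_{\bf{A}} \rangle$ and $\langle {\mathcal{X}}({\bf{B}}), \subseteq_{\bf{B}} \rangle$ are posets since set inclusion is a partial order, and $R_{\mathcal{M}} \subseteq {\mathcal{X}}({\bf{A}}) \times {\mathcal{X}}({\bf{B}}) \times {\mathcal{X}}({\bf{A}})$, $T_{\mathcal{M}} \subseteq {\mathcal{X}}({\bf{B}}) \times {\mathcal{X}}({\bf{A}}) \times {\mathcal{X}}({\bf{A}})$ already have the arities demanded by Definition \ref{definition FI-frames}. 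So what remains is to verify \eqref{condition_R} for $\langle {\mathcal{X}}({\bf{A}}), {\mathcal{X}}({\bf{B}}), \subseteq_{\bf{A}}, \subseteq_{\bf{B}}, R_{\mathcal{M}} \rangle$ and \eqref{condition_T} for $\langle {\mathcal{X}}({\bf{A}}), {\mathcal{X}}({\bf{B}}), \subseteq_{\bf{A}}, \subseteq_{\bf{B}}, T_{\mathcal{M}} \rangle$.

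The one elementary fact I would isolate first is that the set-valued operations $(G,H) \mapsto f(G,H)$ and $(H,G) \mapsto i(H,G)$ on filters of ${\bf{A}}$ and ${\bf{B}}$ are monotone in each argument. Indeed, if $G_{1} \subseteq G_{2}$ in ${\rm{Fi}}({\bf{A}})$ and $H_{1} \subseteq H_{2}$ in ${\rm{Fi}}({\bf{B}})$, then any pair witnessing $x \in f(G_{1},H_{1})$ (resp. $x \in i(H_{1},G_{1})$) already lies in $G_{2} \times H_{2}$ (resp. $H_{2} \times G_{2}$), so it also witnesses membership in $f(G_{2},H_{2})$ (resp. $i(H_{2},G_{2})$); hence $f(G_{1},H_{1}) \subseteq f(G_{2},H_{2})$ and $i(H_{1},G_{1}) \subseteq i(H_{2},G_{2})$.

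With this in hand both conditions follow by chaining inclusions. For \eqref{condition_R}: assume $(Q,R,P) \in R_{\mathcal{M}}$, $\bar{Q} \subseteq_{\bf{A}} Q$, $\bar{R} \subseteq_{\bf{B}} R$ and $P \subseteq_{\bf{A}} \bar{P}$; then $f(Q,R) \subseteq P$ by \eqref{relation_R_{A}}, and monotonicity gives $f(\bar{Q},\bar{R}) \subseteq f(Q,R) \subseteq P \subseteq \bar{P}$, i.e. $(\bar{Q},\bar{R},\bar{P}) \in R_{\mathcal{M}}$. For \eqref{condition_T}: assume $(R,P,Q) \in T_{\mathcal{M}}$, $\bar{R} \subseteq_{\bf{B}} R$, $\bar{P} \subseteq_{\bf{A}} P$ and $Q \subseteq_{\bf{A}} \bar{Q}$; then $i(R,P) \subseteq Q$ by \eqref{relation_T_{A}}, so $i(\bar{R},\bar{P}) \subseteq i(R,P) \subseteq Q \subseteq \bar{Q}$, i.e. $(\bar{R},\bar{P},\bar{Q}) \in T_{\mathcal{M}}$. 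This establishes \eqref{condition_R} and \eqref{condition_T}, so $\mathcal{F}_{\mathcal{M}}$ is a FI-frame.

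I do not expect any genuine obstacle here; the argument is essentially bookkeeping. The only point requiring care is to align the covariant coordinate of each frame condition (the third one, which may only be enlarged) with the \emph{output} prime filter of ${\bf{A}}$ in both $R_{\mathcal{M}}$ and $T_{\mathcal{M}}$, and the two contravariant coordinates (which may only be shrunk) with the \emph{input} arguments of $f$ and $i$ respectively — which is exactly what the monotonicity of $f(\cdot,\cdot)$ and $i(\cdot,\cdot)$ provides.
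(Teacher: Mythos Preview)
Your proof is correct and is precisely the routine verification the paper has in mind: the authors explicitly leave this lemma to the reader, and the only content is the monotonicity of $f(\cdot,\cdot)$ and $i(\cdot,\cdot)$ on filters, which you isolate and apply exactly as needed.
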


\begin{lemma} \label{Representation of FIDL morphisms} 
Let $\mathcal{M}$ and $\hat{\mathcal{M}}$ be two FIDL-modules. If $( \alpha, \gamma) \colon \mathcal{M}\to \hat{\mathcal{M}}$ is a FIDL-homomorphism, then $(\alpha^{*}, \gamma^{*}) \colon \mathcal{F}_{\hat{\mathcal{M}}} \to \mathcal{F}_{\mathcal{M}}$ is a FI-morphism.
\end{lemma}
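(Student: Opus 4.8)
The plan is to verify the four conditions (M1), (M2), (N1), (N2) of Definition \ref{Definition FI-morphism} for the pair $(\alpha^{*}, \gamma^{*})$. First recall that, since $\alpha$ and $\gamma$ are lattice homomorphisms, the maps $\alpha^{*} \colon \mathcal{X}(\hat{\bf{A}}) \to \mathcal{X}({\bf{A}})$ and $\gamma^{*} \colon \mathcal{X}(\hat{\bf{B}}) \to \mathcal{X}({\bf{B}})$ are monotone (this is the Preliminaries material), so we only need the four relational conditions. Throughout I would use Remark \ref{Remark_f and i} and, crucially, Lemma \ref{lem_1}, which translates membership of $f(x,b)$ or $i(b,x)$ in a prime filter into the existence (resp.\ universality) of witnessing prime filters related by $R_{\mathcal{M}}$ or $T_{\mathcal{M}}$.

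For (M1): suppose $(\hat{Q}, \hat{R}, \hat{P}) \in R_{\hat{\mathcal{M}}}$, i.e.\ $\hat{f}(\hat{Q}, \hat{R}) \subseteq \hat{P}$. I must show $f(\alpha^{*}\hat{Q}, \gamma^{*}\hat{R}) \subseteq \alpha^{*}\hat{P}$, that is, $f(\alpha^{-1}\hat{Q}, \gamma^{-1}\hat{R}) \subseteq \alpha^{-1}\hat{P}$. Take $x \in \alpha^{-1}\hat{Q}$ and $b \in \gamma^{-1}\hat{R}$; then $\alpha(x) \in \hat{Q}$, $\gamma(b) \in \hat{R}$, so $\hat{f}(\alpha(x), \gamma(b)) \in \hat{P}$, and by the commuting diagram this equals $\alpha(f(x,b))$, whence $f(x,b) \in \alpha^{-1}\hat{P}$; since $\alpha^{-1}\hat{P}$ is a filter and $f(\alpha^{-1}\hat{Q}, \gamma^{-1}\hat{R})$ is generated by such $f(x,b)$ upward, inclusion follows. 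For (N1) the argument is the dual one using the second diagram and the fact that $i(R_{1}, Q) \subseteq Q_{1}$ unpacks through $i(\gamma^{-1}\hat{R}, \alpha^{-1}\hat{Q})$; here one uses Lemma \ref{lem_1}(2) in the contrapositive form or argues directly with the definition of $i(H,G)$.

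For (M2), the existential conditions, is where the real work lies. Suppose $(\bar{Q}, \bar{R}, \alpha^{*}\hat{P}) \in R_{\mathcal{M}}$ for $\bar{Q} \in \mathcal{X}({\bf{A}})$, $\bar{R} \in \mathcal{X}({\bf{B}})$, $\hat{P} \in \mathcal{X}(\hat{\bf{A}})$; I need $\hat{Q} \in \mathcal{X}(\hat{\bf{A}})$ and $\hat{R} \in \mathcal{X}(\hat{\bf{B}})$ with $(\hat{Q},\hat{R},\hat{P}) \in R_{\hat{\mathcal{M}}}$, $\bar{Q} \subseteq \alpha^{*}\hat{Q}$ and $\bar{R} \subseteq \gamma^{*}\hat{R}$. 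The idea is to push the filters forward: set $G = {\rm{Fig}}_{\hat{\bf{A}}}(\alpha[\bar{Q}])$ and $H = {\rm{Fig}}_{\hat{\bf{B}}}(\gamma[\bar{R}])$, check that $\hat{f}(G,H) \subseteq \hat{P}$ — this reduces, via the diagram and Proposition \ref{propo_1}, to showing $\hat{f}(\alpha(q), \gamma(r)) = \alpha(f(q,r)) \in \hat{P}$ for $q \in \bar{Q}$, $r \in \bar{R}$, which holds because $f(q,r) \in f(\bar{Q},\bar{R}) \subseteq \alpha^{-1}\hat{P}$ — and then invoke Theorem \ref{theo_1}(1) to obtain prime filters $\hat{Q} \supseteq G$, $\hat{R} \supseteq H$ with $\hat{f}(\hat{Q},\hat{R}) \subseteq \hat{P}$. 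Since $\bar{Q} \subseteq \alpha^{-1}(G) \subseteq \alpha^{-1}(\hat{Q}) = \alpha^{*}\hat{Q}$ and similarly for $\bar{R}$, we are done. Condition (N2) is handled symmetrically: one forms the forward-pushed filter on the $\hat{\bf{B}}$ side and the relevant $i$-filter, verifies the inclusion into the appropriate prime filter using the second diagram and Proposition \ref{propo_1}, and applies Theorem \ref{theo_1}(2); the only subtlety, as in the proof of Lemma \ref{lem_1}(2), is bookkeeping the direction of the inclusions, since $i$ is antitone in its first coordinate. The main obstacle is therefore purely this (M2)/(N2) step — correctly identifying which generated filters to feed into Theorem \ref{theo_1} so that the output prime filters pull back to contain $\bar{Q}$ and $\bar{R}$ — but no new idea beyond Theorem \ref{theo_1} and the commuting diagrams is needed.
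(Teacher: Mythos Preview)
Your treatment of (M1), (N1) and (M2) is correct and matches the paper's approach; in fact your (M2) argument---pushing forward to the generated filters $G={\rm Fig}_{\hat{\bf A}}(\alpha[\bar Q])$, $H={\rm Fig}_{\hat{\bf B}}(\gamma[\bar R])$, checking $\hat f(G,H)\subseteq\hat P$, and invoking Theorem~\ref{theo_1}(1)---is cleaner than the paper's somewhat elliptical appeal to Lemma~\ref{lem_1} at that point.

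The gap is in (N2): it is \emph{not} symmetric to (M2), and Theorem~\ref{theo_1}(2) alone does not finish it. In (M2) the target prime filter $\hat P$ is already given and you only need to enlarge $G,H$ to primes; in (N2) the hypothesis is $(R,\alpha^{*}(\hat P),Q)\in T_{\mathcal M}$ and you must \emph{produce} a prime $\hat Q\in\mathcal X(\hat{\bf A})$ satisfying the reversed inclusion $\alpha^{*}(\hat Q)\subseteq Q$, i.e.\ $\hat Q\cap\alpha[Q^{c}]=\emptyset$. Theorem~\ref{theo_1}(2) takes a prime filter in the third slot as \emph{input}, so it cannot manufacture this $\hat Q$ for you. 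The paper handles this by first forming $F={\rm Fig}_{\hat{\bf B}}(\gamma[R])$ and the ideal $I={\rm Idg}_{\hat{\bf A}}(\alpha[Q^{c}])$, proving $\hat i(F,\hat P)\cap I=\emptyset$ (this is the computation using the $i$-diagram and Proposition~\ref{propo_1}), and then applying the Prime Filter Theorem to obtain $\hat Q$ with $\hat i(F,\hat P)\subseteq\hat Q$ and $\hat Q\cap I=\emptyset$; only \emph{then} does Theorem~\ref{theo_1}(2) upgrade $F$ to a prime $\hat R$. Your remark about ``bookkeeping the direction of the inclusions'' is pointing in the right direction, but the concrete extra step---separating the filter $\hat i(F,\hat P)$ from the ideal $I$---is the missing idea, and without it the plan for (N2) does not go through.
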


\begin{proof}
We start by proving (M1). Let $(\hat{Q},\hat{R},\hat{P})\in R_{\hat{\mathcal{M}}}$. In order to prove $(\alpha^{\ast}(\hat{Q}),\gamma^{\ast}(\hat{R}),\alpha^{\ast}(\hat{P}))\in R_{\mathcal{M}}$, let $y\in f(\alpha^{\ast}(\hat{Q}),\gamma^{\ast}(\hat{R}))$. Then there exist $(a,b)\in \alpha^{\ast}(\hat{Q})\times \gamma^{\ast}(\hat{R})$ such that $f(a,b)\leq_{A} y$. Since $\alpha$ is monotone and $(\alpha,\gamma)$ is a FIDL-homomorphism, we have $\alpha(f(a,b))=\hat{f}(\alpha(a),\gamma(b))\leq_{\hat{A}} \alpha(a)$. Since $\alpha(a)\in P$ and $\gamma(b)\in R$, then $\alpha(y)\in \hat{f}(\hat{Q},\hat{R})$. So, our assumption allows us to conclude that $\alpha(y)\in \hat{P}$. Hence $f(\alpha^{\ast}(\hat{Q}),\gamma^{\ast}(\hat{R}))\subseteq_{\mathcal{X}(\bf{A})} \alpha^{\ast}(\hat{P})$. The proof of (N1) is similar. Now we prove (M2). Let us assume that $(Q,R,\alpha^{\ast}(\hat{P}))\in R_{\mathcal{M}}$. Note that, such an assumption allows us to say that $f(a,b)\in \alpha^{\ast}(\hat{P})$ for every $(a,b)\in Q\times R$. Then, since $(\alpha,\gamma)$ is a FIDL-homomorphism, it follows that $\hat{f}(\alpha(a),\gamma(b))\in P$. From $(1)$ of Lemma \ref{lem_1}, there exist $\hat{Q}\in \mathcal{X}(\hat{\mathbf{A}})$ and $\hat{R}\in \mathcal{X}(\hat{\mathbf{B}})$ such that $\alpha(a)\in Q$, $\gamma(b)\in R$ such that $(\hat{Q},\hat{R},\hat{P})\in R_{\hat{\mathcal{M}}}$. Hence (M2) holds. Finally, for proving (N2), let us assume that $(R,\alpha^{\ast}(\hat{P}),Q)\in T_{\mathcal{M}}$. Consider $F={\rm{Fig}}_{\hat{\bf{B}}}(\gamma(R))$ and $I={\rm{Idg}}_{\hat{\bf{A}}}(\alpha(Q^{c}))$. We see that $\hat{i}(F, \hat{P})\cap I=\emptyset$. Assume the contrary, then there exist $y\in \hat{A}$, $a\in \hat{P}$, $b\in \hat{B}$, $r\in R$ and $q\notin Q$ such that $a\leq_{\hat{A}} \hat{i}(b,y)$, $\gamma(r)\leq_{\hat{B}} b$ and $y\leq_{\hat{A}} \alpha(q)$. Since $(\alpha,\gamma)$ is a FIDL-homomorphism, by Proposition \ref{propo_1}, we obtain that $a\leq_{\hat{A}} \hat{i}(\gamma(r), \alpha(q))=\alpha(i(r,q))$. Hence $i(r,q)\in \alpha^{\ast}(\hat{P})$ so $q\in i(R,\alpha^{\ast}(\hat{P}))$ and therefore $q\in Q$, which is a contradiction. Then $\hat{i}(F, \hat{P})\cap I=\emptyset$ and by the Prime Filter Theorem, there exist $\hat{Q}\in \mathcal{X}(\hat{\mathbf{A}})$ such that $\hat{i}(F,\hat{P})\subseteq_{\mathcal{X}(\hat{\mathbf{A}})} \hat{Q}$ and $\hat{Q}\cap I=\emptyset$. On the other hand, from Theorem \ref{theo_1}, there exists $\hat{R}\in \mathcal{X}(\hat{\mathbf{B}})$ such that $\hat{i}(\hat{R},\hat{P})\subseteq_{\mathcal{X}(\hat{\mathbf{A}})} \hat{Q}$ and $R\subseteq_{\mathcal{X}(\mathbf{B})} \gamma^{\ast}(\hat{R})$. As $\alpha(Q^{c})\subseteq_{\mathcal{X}(\hat{\mathbf{A}})} I$ we get that $\hat{Q}\cap \alpha(Q^{c})=\emptyset$. It is not hard to see that the latter is equivalent to say that $\alpha^{\ast}(\hat{Q})\subseteq_{\mathcal{X}(\mathbf{A})} Q$.
\end{proof}

\begin{lemma}\label{Representation of FI-frame morphisms} 
Let $\mathcal{F}$ and $\hat{\mathcal{F}}$ be two FI-frames. If $(g, h) \colon \mathcal{F} \to \hat{\mathcal{F}}$ is a FI-morphism, then $(g^{*}, h^{*}) \colon \mathcal{M}_{\hat{\mathcal{F}}} \\ \to \mathcal{M}_{\mathcal{F}}$ is a FIDL-homomorphism.
\end{lemma}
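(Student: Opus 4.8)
The goal is to show that for a FI-morphism $(g,h)\colon\mathcal{F}\to\hat{\mathcal{F}}$, the pair $(g^{*},h^{*})\colon\mathcal{M}_{\hat{\mathcal{F}}}\to\mathcal{M}_{\mathcal{F}}$ is a FIDL-homomorphism, where $g^{*}(U)=g^{-1}(U)$ for $U\in\mathcal{P}_{i}(\hat{X})$ and $h^{*}(V)=h^{-1}(V)$ for $V\in\mathcal{P}_{i}(\hat{Y})$. I would first recall that since $g,h$ are monotone, $g^{*}$ and $h^{*}$ are well-defined (preimages of increasing sets are increasing) and that they are bounded lattice homomorphisms, since preimages commute with finite unions and intersections and preserve $\emptyset$ and the top. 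Thus the only substantive content is the commutativity of the two diagrams of Definition \ref{Definition FIDL-homomorphism}, i.e. the identities $g^{*}\bigl(f_{\hat{\mathcal{F}}}(U,V)\bigr)=f_{\mathcal{F}}\bigl(g^{*}(U),h^{*}(V)\bigr)$ and $g^{*}\bigl(i_{\hat{\mathcal{F}}}(V,U)\bigr)=i_{\mathcal{F}}\bigl(h^{*}(V),g^{*}(U)\bigr)$ for all $U\in\mathcal{P}_{i}(\hat{X})$ and $V\in\mathcal{P}_{i}(\hat{Y})$.

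For the fusion identity I would argue by mutual inclusion, unwinding the definition \eqref{f_in P}. Take $z\in X$. Then $z\in g^{*}(f_{\hat{\mathcal{F}}}(U,V))$ means $g(z)\in f_{\hat{\mathcal{F}}}(U,V)$, i.e. there are $\bar{x}\in U$, $\bar{y}\in V$ with $(\bar{x},\bar{y},g(z))\in\hat{R}$. By condition (M2) of Definition \ref{Definition FI-morphism} there exist $x\in X$, $y\in Y$ with $(x,y,z)\in R$, $\bar{x}\leq_{\hat{X}}g(x)$ and $\bar{y}\leq_{\hat{Y}}h(y)$; since $U,V$ are increasing, $g(x)\in U$ and $h(y)\in V$, so $x\in g^{*}(U)$ and $y\in h^{*}(V)$, whence $z\in f_{\mathcal{F}}(g^{*}(U),h^{*}(V))$. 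Conversely, if $z\in f_{\mathcal{F}}(g^{*}(U),h^{*}(V))$, pick $x\in g^{*}(U)$, $y\in h^{*}(V)$ with $(x,y,z)\in R$; then (M1) gives $(g(x),h(y),g(z))\in\hat{R}$ with $g(x)\in U$ and $h(y)\in V$, so $g(z)\in f_{\hat{\mathcal{F}}}(U,V)$, i.e. $z\in g^{*}(f_{\hat{\mathcal{F}}}(U,V))$.

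For the implication identity I would likewise unwind \eqref{i_in P}, using (N1) and (N2). Fix $y\in X$. For the inclusion $g^{*}(i_{\hat{\mathcal{F}}}(V,U))\subseteq i_{\mathcal{F}}(h^{*}(V),g^{*}(U))$, assume $g(y)\in i_{\hat{\mathcal{F}}}(V,U)$ and let $x\in Y$, $z\in X$ with $(x,y,z)\in T$ and $x\in h^{*}(V)$; by (N1), $(h(x),g(y),g(z))\in\hat{T}$ with $h(x)\in V$, so $g(z)\in U$, i.e. $z\in g^{*}(U)$, as required. For the reverse inclusion, suppose $g(y)\notin i_{\hat{\mathcal{F}}}(V,U)$, so there are $\bar{x}\in\hat{Y}$, $\bar{z}\in\hat{X}$ with $(\bar{x},g(y),\bar{z})\in\hat{T}$, $\bar{x}\in V$ and $\bar{z}\notin U$; by (N2) there exist $x\in Y$, $z\in X$ with $(x,y,z)\in T$, $\bar{x}\leq_{\hat{Y}}h(x)$ and $g(z)\leq_{\hat{X}}\bar{z}$; then $h(x)\in V$ (as $V$ is increasing), so $x\in h^{*}(V)$, while $g(z)\notin U$ (since $U$ is increasing and $g(z)\leq_{\hat{X}}\bar{z}\notin U$ would force $\bar z\in U$ otherwise), i.e. $z\notin g^{*}(U)$; this witnesses $y\notin i_{\mathcal{F}}(h^{*}(V),g^{*}(U))$. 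Combining the two identities with Remark \ref{Fusion and impliction as unary operations} (or directly with Definition \ref{Definition FIDL-homomorphism}) yields that $(g^{*},h^{*})$ is a FIDL-homomorphism.

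I do not anticipate a genuine obstacle here: the argument is a bookkeeping exercise matching the frame conditions (M1)/(M2) and (N1)/(N2) against the definitions of $f_{\mathcal{F}}$ and $i_{\mathcal{F}}$, which is exactly why the authors may elect to leave it to the reader. The only point requiring a little care is the implication clause, whose universally-quantified shape forces one to argue the nontrivial inclusion contrapositively and to invoke (N2); one must also be attentive that the monotonicity hypotheses on $U,V$ (their being increasing subsets) are what let the inequalities $\bar{x}\leq_{\hat{X}}g(x)$, $g(z)\leq_{\hat{X}}\bar{z}$ propagate membership or non-membership in the right direction.
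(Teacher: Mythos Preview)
Your proof is correct and follows essentially the same approach as the paper: verify that $g^{*},h^{*}$ are bounded lattice homomorphisms, then establish the two commutativity identities by mutual inclusion, using (M1)/(M2) for the fusion operation and (N1)/(N2) for the implication operation. Your version is in fact more complete than the paper's, which works in the unary formulation of Remark~\ref{Remark_f and i} and leaves the (M1) and (N1) inclusions as ``straightforward'' and ``easy'' respectively, whereas you spell out all four inclusions.
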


\begin{proof}
Let $\mathcal{M}_{\mathcal{F}}$ and $\mathcal{M}_{\mathcal{\hat{F}}}$ be the FIDL-modules that arise from Lemma \ref{representation}. In order to simplify notation, in this proof we write $f$ and $i$ instead of $f_{\mathcal{F}}$. Similarly, we write $\hat{f}$ and $\hat{i}$ instead of $f_{\mathcal{\hat{F}}}$ and $i_{\mathcal{\hat{F}}}$. This is with the aim of setting our proof within the context of Remark \ref{Remark_f and i}. It is clear that $g^{\ast}:\mathcal{P}_{i}(\hat{X})\rightarrow \mathcal{P}_{i}(X)$ and $h^{\ast}:\mathcal{P}_{i}(\hat{Y})\rightarrow \mathcal{P}_{i}(Y)$ are homomorphisms of bounded distributive lattices so, in order to prove our claim we proceed to check that for every $U\in \mathcal{P}_{i}(\hat{Y})$, the following diagrams 
\begin{displaymath}
\begin{array}{ccc}
\xymatrix{
\mathcal{P}_{i}(\hat{X}) \ar[d]_-{g^{\ast}} \ar[r]^-{\hat{f}_{U}} & \mathcal{P}_{i}(\hat{X}) \ar[d]^-{g^{\ast}} \\
\mathcal{P}_{i}(X) \ar[r]_-{f_{h^{\ast}(U)}} & \mathcal{P}_{i}(X)
}
 & & 
\xymatrix{
\mathcal{P}_{i}(\hat{X}) \ar[d]_-{g^{\ast}} \ar[r]^-{\hat{i}_{U}} & \mathcal{P}_{i}(\hat{X}) \ar[d]^-{g^{\ast}} \\
\mathcal{P}_{i}(X) \ar[r]_-{i_{h^{\ast}(U)}} & \mathcal{P}_{i}(X)
}
\end{array}
\end{displaymath}
commute. For the diagram of the left, let $V\in \mathcal{P}_{i}(\hat{X})$ and $z\in g^{\ast}(\hat{f}_{U}(V))$. So, there exist $x'\in U$ and $y'\in V$ such that $(x',y',g(z))\in \hat{R}$. Since $(g,h)$ is a FI-morphism then from (M2) of Definition \ref{Definition FI-morphism}, there exist $x\in Y$ and $y\in X$ such that $x'\leq_{\hat{Y}} h(x)$, $y'\leq_{\hat{X}} g(y)$ and $(x,y,z)\in R$. Hence $x\in h^{\ast}(U)$, $y\in g^{\ast}(V)$ and $(x,y,z)\in R$. That is to say, $z\in f_{h^{\ast}(U)}(g^{\ast}(V))$. The other inclusion is straightforward. For the diagram of the right, let $V\in \mathcal{P}_{i}(\hat{X})$ and suppose that $y\in i_{h^{\ast}(U)}(g^{\ast}(V))$. So, for every $x\in Y$ and $z\in X$, if $(x,y,z)\in T$ and $h(x)\in U$, then $g(z)\in V$. We recall that for showing $y\in g^{\ast}(\hat{i}_{U}(V))$ we need to prove that for every  $x'\in \hat{Y}$ and $z'\in \hat{X}$ such that $(x',g(y),z')\in \hat{T}$ and $x'\in U$, then $z'\in V$. Indeed, since $(g,h)$ is a FI-morphism, from (N2) of Definition \ref{Definition FI-morphism}, there exist $x\in Y$ and $z\in X$ such that $(x,y,z)\in T$, $x'\leq_{\hat{Y}} h(x)$ and $g(z)\leq_{\hat{X}} z'$. As $U\in \mathcal{P}_{i}(Y)$, it follows that $h(x)\in U$ and since $(x,y,z)\in T$ from assumption, then we obtain $g(z)\in V$ and $z' \in V$. The remaining inclusion is easy. 
\end{proof}

Observe that Lemmas \ref{representation} and \ref{Representation of FI-frame morphisms} allow to define a functor $\mathbb{G} \colon \mathsf{FIFram} \to \mathsf{FIMod}^{op}$ as follows: 
\begin{displaymath}
\begin{array}{rcl}
\mathcal{F} & \mapsto & \mathcal{M}_{\mathcal{F}}
\\
(g,h) & \mapsto & (g^{\ast},h^{\ast}).
\end{array}
\end{displaymath}

On the other hand, from Lemmas \ref{Representation of a FI-frame} and \ref{Representation of FIDL morphisms}, we can define a functor $\mathbb{F} \colon \mathsf{FIMod}^{op} \to \mathsf{FIFram}$ as follows:
\begin{displaymath}
\begin{array}{rcl}
\mathcal{M} & \mapsto & \mathcal{F}_{\mathcal{M}}
\\
(\alpha,\gamma) & \mapsto & (\alpha^{\ast},\gamma^{\ast}).
\end{array}
\end{displaymath}

We conclude this section by proving our representation theorem for FIDL-modules. 

\begin{theorem} \label{Functor 2 DLFI}
$\mathbb{G}$ is a left adjoint of $\mathbb{F}$ and the counit is an isomorphism.
\end{theorem}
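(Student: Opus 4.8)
The plan is to establish the adjunction $\mathbb{G} \dashv \mathbb{F}$ directly, by producing the unit and counit natural transformations and checking the triangle identities, and then to verify that the counit is componentwise an isomorphism. Since $\mathbb{G} \colon \mathsf{FIFram} \to \mathsf{FIMod}^{op}$, an adjunction $\mathbb{G} \dashv \mathbb{F}$ amounts to natural bijections $\mathsf{FIMod}^{op}(\mathbb{G}\mathcal{F}, \mathcal{M}) \cong \mathsf{FIFram}(\mathcal{F}, \mathbb{F}\mathcal{M})$, i.e. (unwinding the $\mathrm{op}$) natural bijections $\mathsf{FIMod}(\mathcal{M}, \mathcal{M}_{\mathcal{F}}) \cong \mathsf{FIFram}(\mathcal{F}, \mathcal{F}_{\mathcal{M}})$. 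First I would record that the maps $\epsilon_X \colon X \to \mathcal{X}(\mathcal{C}(X))$ and $\beta_{\mathbf A} \colon \mathbf A \to \mathcal C(\mathcal X(\mathbf A))$ from the Preliminaries already give the unit and counit of the underlying Priestley adjunction on each sort; the work is to check that these interact correctly with the $R$/$T$ relations and the $f$/$i$ operations.

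The key steps, in order, are as follows. (i) For a FI-frame $\mathcal F$, show that the pair $\eta_{\mathcal F} = (\epsilon_X, \epsilon_Y) \colon \mathcal F \to \mathcal F_{\mathcal M_{\mathcal F}} = \mathbb F \mathbb G \mathcal F$ is a FI-morphism; this requires checking (M1), (M2), (N1), (N2), which translate into statements relating $(x,y,z)\in R$ to $(\epsilon_X(x), \epsilon_Y(y), \epsilon_X(z)) \in R_{\mathcal M_{\mathcal F}}$, i.e. to the inclusion $f_{\mathcal F}(\epsilon_X(x), \epsilon_Y(y)) \subseteq \epsilon_X(z)$ unpacked via \eqref{f_in P}. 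One direction uses the defining property \eqref{condition_R} of F-frames together with the fact that $\epsilon_X$ is an order-isomorphism onto $\mathcal X(\mathcal C(X))$; the reverse direction (M2) uses that every prime filter of $\mathcal C(X)$ is $\epsilon_X(x)$ for a unique $x$, plus the already-established Priestley machinery. (ii) For a FIDL-module $\mathcal M$, show that $\varepsilon_{\mathcal M} = (\beta_{\mathbf A}, \beta_{\mathbf B}) \colon \mathcal M \to \mathcal M_{\mathcal F_{\mathcal M}} = \mathbb G \mathbb F \mathcal M$ is a FIDL-homomorphism, regarded as a morphism in $\mathsf{FIMod}^{op}$. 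The commutativity of the two squares from Definition \ref{Definition FIDL-homomorphism} is exactly the content of Lemma \ref{lem_1}: the condition $\beta_{\mathbf A}(f(x,b)) = f_{\mathcal F_{\mathcal M}}(\beta_{\mathbf A}(x), \beta_{\mathbf B}(b))$ says $P \ni f(x,b) \iff \exists (Q,R)\ (Q,R,P)\in R_{\mathcal M},\ x\in Q,\ b\in R$, which is precisely Lemma \ref{lem_1}(1), and dually for $i$ via Lemma \ref{lem_1}(2). (iii) Check naturality of $\eta$ and $\varepsilon$, which reduces on each sort to the naturality of the Priestley unit/counit (already known) since the $\ast$-constructions on morphisms are the underlying Priestley duals. (iv) Verify the two triangle identities $\mathbb G \eta \circ \varepsilon \mathbb G = \mathrm{id}$ and $\eta \mathbb F \circ \mathbb F \varepsilon = \mathrm{id}$; again these hold sort-by-sort because they hold in the Priestley duality. (v) Finally, since $\beta_{\mathbf A}$ and $\beta_{\mathbf B}$ are isomorphisms of bounded distributive lattices for every $\mathbf A, \mathbf B$, Lemma \ref{Isos FDL} gives that $\varepsilon_{\mathcal M} = (\beta_{\mathbf A}, \beta_{\mathbf B})$ is an isomorphism in $\mathsf{FIMod}$, hence in $\mathsf{FIMod}^{op}$; so the counit is an isomorphism.

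The main obstacle I expect is step (i), specifically verifying (M2) and (N2) for $\eta_{\mathcal F}$: one must show that an abstract prime-filter-level witness $(\bar x, \bar y, \epsilon_X(z)) \in R_{\mathcal M_{\mathcal F}}$ descends to an honest triple in $R$ with the required inequalities. This is where one genuinely uses the F-frame closure condition \eqref{condition_R} (and $\eqref{condition_T}$ for $T$) rather than just formal Priestley nonsense — concretely, one argues by contradiction that if no such $(x,y,z) \in R$ exists, then the increasing sets $[\bar x)$-type and $[\bar y)$-type clopens separate the filters, contradicting $f_{\mathcal F}(\bar x, \bar y) \subseteq \epsilon_X(z)$. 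Once that localization lemma is in place, everything else is either a direct citation of Lemma \ref{lem_1}, Lemma \ref{Isos FDL}, and the Priestley duality, or a routine diagram chase, so I would state steps (iii)–(iv) briefly and spend the bulk of the argument on (i) and (ii).
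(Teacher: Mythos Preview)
Your proposal has a genuine gap arising from a conflation of two different settings. Theorem~\ref{Functor 2 DLFI} lives in Section~\ref{Representation of FIDL-modules}, where FI-frames are bare posets and $\mathbb{G}(\mathcal{F}) = \langle \mathcal{P}_i(X), \mathcal{P}_i(Y), \ldots\rangle$ uses \emph{all} increasing subsets, not the clopen ones $\mathcal{C}(X)$ from the Priestley duality of Section~\ref{Topological duality for FIDL-modules}. Consequently the map you call the counit, $(\beta_{\mathbf{A}},\beta_{\mathbf{B}})$, lands in $\mathcal{P}_i(\mathcal{X}(\mathbf{A}))\times \mathcal{P}_i(\mathcal{X}(\mathbf{B}))$, and $\beta_{\mathbf{A}}\colon A\to \mathcal{P}_i(\mathcal{X}(\mathbf{A}))$ is only an embedding onto $\beta_{\mathbf{A}}[A]$, not an isomorphism (the Preliminaries say exactly this). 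So step~(v) fails, and with it your argument that the counit is an isomorphism. The same confusion appears earlier when you write $\epsilon_X\colon X\to \mathcal{X}(\mathcal{C}(X))$ and speak of ``the underlying Priestley adjunction''; there is no topology on $X$ here.

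By comparison, the paper does two things differently. First, it establishes the adjunction via the universal property rather than via unit/counit and triangle identities: given a FI-morphism $(g,h)\colon\mathcal{F}\to\mathcal{F}_{\mathcal{M}}$, it constructs the transposed FIDL-homomorphism $(\overline{g},\overline{h})\colon\mathcal{M}\to\mathcal{M}_{\mathcal{F}}$ directly and verifies the two commuting squares using Lemma~\ref{lem_1} together with conditions (M1), (M2), (N1), (N2). Second, and more importantly, the paper takes the ``counit'' to be the \emph{frame-side} transformation $(\epsilon_X,\epsilon_Y)\colon \mathcal{F}\to \mathcal{F}_{\mathcal{M}_{\mathcal{F}}}$ (what under the usual convention for $\mathbb{G}\dashv\mathbb{F}$ one would call the unit) and argues it is an isomorphism by invoking that $\epsilon_X,\epsilon_Y$ are poset isomorphisms and then applying Lemma~\ref{Isos FIF}. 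So even setting aside the terminological mismatch, your identification of which natural transformation is claimed to be invertible is the opposite of the paper's, and that is exactly where your argument breaks. If you want to salvage your unit/counit approach, you should replace $\mathcal{C}$ by $\mathcal{P}_i$ throughout, drop the appeal to Priestley duality, and aim to show that $(\epsilon_X,\epsilon_Y)$, not $(\beta_{\mathbf{A}},\beta_{\mathbf{B}})$, is the invertible one.
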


\begin{proof}
We start by showing that $\mathbb{G}$ is a left adjoint of $\mathbb{F}$. Let $\mathcal{F}=\langle X,Y,\leq_{X},\leq_{Y}, R,T\rangle$ be a FI-frame, $\mathcal{M}=\langle {\bf{A}}, {\bf{B}}, f, i \rangle$ a FIDL-module and $(g,h) \colon \mathcal{F} \to \mathcal{F}_{\mathcal{M}}$ a FI-morphism. Then $g \colon X \to \mathcal{X}(\textbf{A})$ and $h \colon Y \to \mathcal{X}(\textbf{B})$ are maps of posets satisfying the conditions of Definition \ref{Definition FI-morphism}. From Stone's representation theorem, there exist a unique pair of lattice homomorphisms $\overline{g} \colon A \to \mathcal{P}_{i}(X)$ and $\overline{h} \colon B \to \mathcal{P}_{i}(Y)$ defined by $\overline{g}(a) = \{y \in X \colon a \in g(y)\}$ and $\overline{h}(b) = \{x \in Y \colon b \in h(x)\}$. In order to prove that $(\overline{g}, \overline{h}) \colon \mathcal{M} \to \mathcal{M}_{\mathcal{F}}$ is a FIDL-homomorphism, we need to show the commutativity of the following diagrams
\begin{displaymath}
\begin{array}{ccc}
\xymatrix{
A \ar[r]^-{f_{b}} \ar[d]_-{\overline{g}}  & A \ar[d]^-{\overline{g}}
\\
\mathcal{P}_{i}(X) \ar[r]_-{f_{{\mathcal{F}}_{\overline{h}(b)}}} & \mathcal{P}_{i}(X)
} & & \xymatrix{
A \ar[r]^-{i_{b}} \ar[d]_-{\overline{g}}  & A \ar[d]^-{\overline{g}}
\\
\mathcal{P}_{i}(X) \ar[r]_-{i_{{\mathcal{F}}_{\overline{h}(b)}}} & \mathcal{P}_{i}(X)
}
\end{array}
\end{displaymath}  
for every $b \in B$. We prove $\overline{g}(f_{b}(a)) = f_{\mathcal{F}_{\overline{h}(b)}}(\overline{g}(a))$, for every $a \in A$. So to check that $\overline{g}(f_{b}(a)) \subseteq f_{\mathcal{F}_{\overline{h}(b)}}(\overline{g}(a))$, let $z\in \overline{g}(f_{b}(a))$. Thus, $f_{b}(a)\in g(z)$. By Lemma \ref{lem_1}, there exist $Q \in \mathcal{X}(\textbf{B})$ and $E \in \mathcal{X}(\textbf{A})$ such that $b \in Q$, $a \in E$ and $(Q,E, g(z)) \in R_{\mathcal{M}}$. From condition (M2), there exist $x\in Y$ and $y\in X$ such that $Q\subseteq_{\mathcal{X}(\textbf{B})} h(x)$, $E \subseteq_{\mathcal{X}(\textbf{A})} g(x)$ and $(x,y,z) \in R$. Hence, $x\in \overline{h}(b)$ and $y \in \overline{g}(a)$. Therefore $z \in  f_{\mathcal{F}_{\overline{h}(b)}}(\overline{g}(a))$. Conversely, if $z \in f_{\mathcal{F}_{\overline{h}(b)}}(\overline{g}(a))$, then there exist $x \in \overline{h}(b)$ and $y \in \overline{g}(a)$ such that $(x,y,z)\in R$. So, $(h(x),g(y),g(z)) \in R_{\mathcal{M}}$ from (M1) and consequently, $f(h(x),g(y))\subseteq g(z)$. By Lemma \ref{lem_1}, $f_{b}(a) \in g(z)$, or equivalently, $z \in \overline{g}(f_{b}(a))$.

Now we prove that $\overline{g}(i_{b}(a)) = i_{\mathcal{F}_{\overline{h}(b)}}(\overline{g}(a))$, for every $a \in A$. Let $z \in \overline{g}(i_{b}(a))$. If $(x,y,z)\in T$ and $a \in h(x)$, then from condition (N1) we have $i(h(x),g(y)) \subseteq_{\mathcal{X}(\textbf{A})} g(z)$. Since $i_{b}(a) \in g(z)$, from Lemma \ref{lem_1}, we can conclude that $a \in g(z)$ and $z \in i_{\mathcal{F}_{\overline{h}(b)}}(\overline{g}(a))$. On the other hand, assume that $z \in i_{\mathcal{F}_{\overline{h}(b)}}(\overline{g}(a))$. In order to prove that $z\in \overline{g}(i_{b}(a))$, let $R\in \mathcal{X}(\textbf{B})$ and $Q \in \mathcal{X}(\textbf{A})$ such that $(R,g(y),Q)\in T_{\mathcal{M}}$ and $b \in R$. So, $i(R,g(y))\subseteq_{\mathcal{X}(\textbf{A})} Q$. From condition (N2), then there exist $x \in Y$ and $z\in X$ such that $(x,y,z)\in T$, $R\subseteq_{\mathcal{X}(\textbf{B})}h(x)$ and $g(z)\subseteq_{\mathcal{X}(\textbf{B})}Q$. So, from assumption we have $a \in g(z)$. Hence, by Lemma \ref{lem_1}, $z \in \overline{g}(i_{b}(a))$. 

For the last part, note that for each frame $\mathcal{F}=\langle X,Y,\leq_{X},\leq_{Y}, R,T\rangle$, the counit of the adjunction $\mathbb{G} \dashv \mathbb{F}$ is determined by the pair of monotone maps $\epsilon_{X} \colon X \to X(\mathcal{P}_{i}(X))$ and $\epsilon_{Y} \colon Y \to X(\mathcal{P}_{i}(Y))$, which are defined by $\epsilon_{X}(y) = \{V \in \mathcal{P}_{i}(X) \colon y\in V\}$ and $\epsilon_{Y}(x) = \{U \in \mathcal{P}_{i}(Y) \colon x\in U\}$. It is clear from Stone's representation theorem, that the maps $\epsilon_{X}$ and $\epsilon_{Y}$ are isomorphisms of posets. So, from Lemma \ref{Isos FIF}, the result follows.
\end{proof}

\section{Topological duality}   \label{Topological duality for FIDL-modules}

In this section we prove a duality for FIDL-modules by using some of the results of Section \ref{Representation of FIDL-modules} together with a suitable extension of Priestley duality for distributive lattices. The dual objects are certain topological bi-spaces endowed with two relations satisfying some particular properties. 

\begin{definition}\label{Associated Spaces}
A structure $\mathcal{U}=\langle X,Y,\leq_{X}, \leq_{Y}, \tau_{X}, \tau_{Y}, R, T\rangle$ is called an {\rm{Urquhart space}}, if the following conditions hold:
\begin{enumerate}
\item $\langle X, \leq_{X}, \tau_{X} \rangle$ and $\langle Y, \leq_{Y}, \tau_{Y} \rangle$ are Priestley spaces,
\item $R\subseteq X\times Y\times X$ and $T\subseteq Y\times X \times X$,
\item For every $U\in \mathcal{C}(Y)$ and every $V \in \mathcal{C}(X)$, we have $f(V,U), i(U,V)\in \mathcal{C}(X)$,
\item For every $x \in Y$ and every $y,z\in X$, if $f(\epsilon_{X}(y) ,\epsilon_{Y}(x))\subseteq \epsilon_{X}(z)$, then $(y, x, z)\in R$,
\item For every $x\in Y$ and every $y,z\in X$, if $i(\epsilon_{Y}(x) ,\epsilon_{X}(y))\subseteq \epsilon_{X}(z)$, then $(x,y,z)\in T$.
\end{enumerate}
\end{definition}

\begin{lemma}\label{Spaces are Frames}
If $\mathcal{U}$ is an Urquhart spaces, then it is a FI-frame.
\end{lemma}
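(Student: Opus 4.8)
The plan is to show that an Urquhart space $\mathcal{U}=\langle X,Y,\leq_{X},\leq_{Y},\tau_{X},\tau_{Y},R,T\rangle$, once we forget the topologies, is exactly a FI-frame in the sense of Definition \ref{definition FI-frames}. By condition (1) of Definition \ref{Associated Spaces}, $\langle X,\leq_{X}\rangle$ and $\langle Y,\leq_{Y}\rangle$ are posets, and by condition (2) we have $R\subseteq X\times Y\times X$ and $T\subseteq Y\times X\times X$ of the right type. Thus the only thing to verify is that $R$ satisfies the monotonicity property (\ref{condition_R}) and that $T$ satisfies (\ref{condition_T}); once these hold, $\langle X,Y,\leq_{X},\leq_{Y},R\rangle$ is an F-frame and $\langle X,Y,\leq_{X},\leq_{Y},T\rangle$ is an I-frame, so $\mathcal{U}$ (stripped of its topology) is a FI-frame.

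First I would handle (\ref{condition_R}). Suppose $(x,y,z)\in R$, $\bar{x}\leq_{X}x$, $\bar{y}\leq_{Y}y$ and $z\leq_{X}\bar{z}$; the goal is $(\bar{x},\bar{y},\bar{z})\in R$. By condition (4) of Definition \ref{Associated Spaces} it suffices to prove $f\bigl(\epsilon_{X}(\bar{x}),\epsilon_{Y}(\bar{y})\bigr)\subseteq\epsilon_{X}(\bar{z})$. Now $\epsilon_{X}$ and $\epsilon_{Y}$ are order-isomorphisms (from the Priestley setup recalled in Section \ref{Preliminaries}), so $\bar{x}\leq_{X}x$ gives $\epsilon_{X}(\bar{x})\subseteq\epsilon_{X}(x)$, and similarly $\epsilon_{Y}(\bar{y})\subseteq\epsilon_{Y}(y)$ and $\epsilon_{X}(z)\subseteq\epsilon_{X}(\bar{z})$. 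Since $f=f_{\mathcal{F}}$ on $\mathcal{P}_{i}(X)\times\mathcal{P}_{i}(Y)$ is monotone in each argument (immediate from the definition (\ref{f_in P}), or from the analogue of Proposition \ref{propo_1}), we get
\begin{equation*}
f\bigl(\epsilon_{X}(\bar{x}),\epsilon_{Y}(\bar{y})\bigr)\subseteq f\bigl(\epsilon_{X}(x),\epsilon_{Y}(y)\bigr).
\end{equation*}
From $(x,y,z)\in R$ and the definition of $f$, since $x\in\epsilon_{X}(x)$ (no: rather, one uses that $R$ and $f$ are linked as in the representation) we have $z\in f\bigl(\epsilon_{X}(x),\epsilon_{Y}(y)\bigr)$, and as this set is increasing it contains $\bar{z}$; more to the point, the witnessing triple shows $f\bigl(\epsilon_{X}(x),\epsilon_{Y}(y)\bigr)\subseteq\epsilon_{X}(z)\subseteq\epsilon_{X}(\bar{z})$. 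Chaining the inclusions yields $f\bigl(\epsilon_{X}(\bar{x}),\epsilon_{Y}(\bar{y})\bigr)\subseteq\epsilon_{X}(\bar{z})$, and condition (4) delivers $(\bar{x},\bar{y},\bar{z})\in R$.

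The treatment of (\ref{condition_T}) is entirely parallel, using condition (5) in place of (4) and the monotonicity of $i=i_{\mathcal{F}}$, which by (\ref{i_in P}) is order-reversing in its first argument and order-preserving in its second (mirroring Proposition \ref{propo_1}); the order-reversal in the first coordinate matches the fact that in (\ref{condition_T}) one passes from $y$ to a \emph{smaller} $\bar{y}$ while still needing the inclusion to go the right way. I expect the main (mild) obstacle to be bookkeeping the directions of the several inclusions produced by $\epsilon_{X},\epsilon_{Y}$ and by the monotonicity of $f$ and $i$, and making precise the link between $(x,y,z)\in R$ and the inclusion $f(\epsilon_{X}(x),\epsilon_{Y}(y))\subseteq\epsilon_{X}(z)$ — in fact this link is exactly the content that $R$ coincides with $R_{\mathcal{M}_{\mathcal{U}}}$ under the identifications $X\cong\mathcal{X}(\mathcal{C}(X))$, $Y\cong\mathcal{X}(\mathcal{C}(Y))$, so one may also phrase the whole argument by transporting along these isomorphisms and invoking Lemma \ref{Representation of a FI-frame}. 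Either way no genuinely new idea is required beyond Definition \ref{Associated Spaces}(3)--(5) and the order-isomorphism property of the Priestley evaluation maps.
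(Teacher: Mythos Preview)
Your overall strategy is the same as the paper's: reduce the desired $(\bar x,\bar y,\bar z)\in R$ to the filter inclusion via condition~(4), use monotonicity of $\epsilon_X,\epsilon_Y$ and of $f$ to shrink the left-hand side, and use $z\leq_X\bar z$ to enlarge the right-hand side. The one step you treat loosely is exactly the one the paper spells out, namely the implication
\[
(x,y,z)\in R \ \Longrightarrow\ f\bigl(\epsilon_X(x),\epsilon_Y(y)\bigr)\subseteq\epsilon_X(z).
\]
Your sentence ``$z\in f(\epsilon_X(x),\epsilon_Y(y))$'' is a type error (the members of that filter are clopen upsets, not points of $X$), and the phrase ``the witnessing triple shows\ldots'' asserts the inclusion without proving it. Note also that condition~(4) only gives the \emph{converse} direction, so it cannot be quoted here; and your alternative route via Lemma~\ref{Representation of a FI-frame} is circular, because the identification $R=R_{\mathcal{M}_{\mathcal{U}}}$ you would need is precisely this unproved implication together with~(4).

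The paper fills the gap as follows. Take any $P\in f(\epsilon_X(x),\epsilon_Y(y))$; by the definition of the filter $f(G,H)$ there exist $S\in\epsilon_X(x)$ and $Q\in\epsilon_Y(y)$ (i.e.\ $S\in\mathcal{C}(X)$ with $x\in S$, $Q\in\mathcal{C}(Y)$ with $y\in Q$) such that $f_{\mathcal F}(S,Q)\subseteq P$. Since $x\in S$, $y\in Q$ and $(x,y,z)\in R$, the very definition~(\ref{f_in P}) of $f_{\mathcal F}$ gives $z\in f_{\mathcal F}(S,Q)\subseteq P$, hence $P\in\epsilon_X(z)$. With this step in place your chain of inclusions is complete, and the argument for $T$ is the analogous one using~(\ref{i_in P}) and condition~(5).
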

\begin{proof}
Let $\mathcal{U}=\langle X,Y,\leq_{X}, \leq_{Y}, R,T\rangle$ be an Urquhart space. We need to check that $R$ and $T$ satisfy conditions (\ref{condition_R}) and (\ref{condition_T}) of Definition \ref{definition FI-frames}, respectively. Since both proofs are similar, we only prove that $R$ satisfies (\ref{condition_R}). Suppose $(y,x,z)\in R$, $y'\leq_{X}  y$, $x' \leq_{Y} x$ and $z \leq_{X} z'$. Because of $\epsilon_{X}$ and $\epsilon_{Y}$ are monotonous, then $\epsilon_{X}(y')\subseteq \epsilon_{X}(y)$ and $\epsilon_{Y}(x')\subseteq \epsilon_{Y}(x)$. So, from Proposition \ref{propo_1}, we obtain $f(\epsilon_{X}(y'), \epsilon_{Y}(x'))\subseteq f(\epsilon_{X}(y) ,\epsilon_{Y}(x))$. If $P\in f(\epsilon_{X}(y') ,\epsilon_{Y}(x'))$, then there exist $S\in \mathcal{C}(X)$ and $Q\in \mathcal{C}(Y)$ such that $y\in S$, $x\in Q$ and $f(S,Q)\subseteq P$. As $\mathcal{U}$ is an Urquhart space then $f(S,Q)\in \mathcal{C}(X)$, and due to $(y,x,z)\in R$ from hypothesis, we obtain $z\in f(S,Q)$. Thus, $z\in P$. Because $P\in \mathcal{C}(X)$ and $z\leq_{X}z'$, we conclude that $f(\epsilon_{Y}(x'),\epsilon_{X}(y'))\subseteq \epsilon_{X}(z')$. Therefore, from $(4)$ of Definition \ref{Associated Spaces}, we get $(y',x',z')\in R$.
\end{proof}

\begin{definition}
Let $\mathcal{U}=\langle X,Y,\leq_{X}, \leq_{Y}, \tau_{X}, \tau_{Y}, R, T\rangle$ and $\hat{\mathcal{U}}=\langle \hat{X},\hat{Y},\leq_{\hat{X}}, \leq_{\hat{Y}}, \tau_{\hat{X}}, \tau_{\hat{X}} \hat{R}, \hat{T}\rangle$ be Urquhart spaces. We shall say that a pair $(g,h) \colon \mathcal{U} \to \hat{\mathcal{U}}$ is an {\rm{U-map}}, if $g \colon X \to \hat{X}$ and $h \colon Y \to \hat{Y}$ are monotonous and continuous maps, and satisfy the conditions (M1), (M2), (N1) and (N2) of Definition \ref{Definition FI-morphism}. 
\end{definition}

We denote by $\mathsf{USp}$ the category of Urquhart spaces and U-maps.

Let $\mathcal{M}$ be a FIDL-module and $\mathbb{F} \colon \mathsf{FIMod}^{op} \to \mathsf{FIFram}$ the functor of Theorem \ref{Functor 2 DLFI}. Notice that $\mathbb{F}(\mathcal{M})=\mathcal{F}_{\mathcal{M}}$ is an Urquhart space. The latter assertion lies in the following facts which are immediate from Priestley duality: (1) $\langle\mathcal{X}(\textbf{A}),\subseteq_{\bf{A}}, \tau_{\textbf{A}}\rangle$ and $\langle\mathcal{X}(\textbf{B}), \subseteq_{\bf{B}}, \tau_{\textbf{B}}\rangle$ are Priestley spaces; (2) Since $\mathcal{C}(\mathcal{X}(\textbf{A}))=\{\beta_{\bf{A}}(a) \colon  a\in A \}$ and $\mathcal{C}(\mathcal{X}(\textbf{B}))=\{\beta_{\bf{B}}(b) \colon b\in B\}$ then, for every $U\in \mathcal{C}(\mathcal{X}(\textbf{B}))$ and every $V \in \mathcal{C}(\mathcal{X}(\textbf{A}))$, there exist $a\in A$ and $b\in B$ such that $f(V,U)=f(\beta_{\bf{A}}(a), \beta_{\bf{B}}(b))=\beta_{\bf{A}}(f(a,b))$ and $i(U,V)=i(\beta_{\bf{B}}(b),\beta_{\bf{A}}(a))=\beta_{\bf{A}}(i(b,a))$. Hence $f(V,U), i(U,V)\in \mathcal{C}(\mathcal{X}(\textbf{A}))$; (3) Since $\epsilon_{\mathcal{X}(\textbf{A})}(P)=P$ and $\epsilon_{\mathcal{X}(\textbf{B})}(Q)=Q$, for every $P\in \mathcal{X}(\textbf{A})$ and every $Q\in \mathcal{X}(\textbf{B})$, it follows that conditions $(4)$ and $(5)$ of the Definition \ref{Associated Spaces} hold. In addition, if $(\alpha,\gamma) \colon \mathcal{M} \to \hat{\mathcal{M}}$ is a FIDL-homomorphism between two FIDL-modules $\mathcal{M}$ and $\hat{\mathcal{M}}$, it is also clear from Priestley duality that $\mathbb{F}(\alpha, \gamma) = (\alpha^{\ast}, \lambda^{\ast})$ is an U-map between Urquhart spaces.

On the other hand, if $\mathcal{U}$ is an Urquhart space, then from Priestley duality the structure
\begin{equation*}
\mathcal{M}_{\mathcal{U}}= \langle\mathcal{C}(X), \mathcal{C}(Y),f_{\mathcal{U}},i_{\mathcal{U}}\rangle
\end{equation*}
is a FIDL-module. These facts allows us to define an assignment $\mathbb{J} \colon \mathsf{USp} \rightarrow \mathsf{FIMod}^{op}$ as follows: 
\begin{displaymath}
\begin{array}{rcl}
\mathcal{U} & \mapsto & \mathcal{M}_{\mathcal{U}}
\\
(g,h) & \mapsto & (g^{\ast},h^{\ast}),
\end{array}
\end{displaymath}
where $f_{\mathcal{U}}$ and $i_{\mathcal{U}}$ are the operations defined in (\ref{f_in P}) and (\ref{i_in P}), respectively. Such an assignment is clearly functorial. Notice that as an straight application of Priestley duality, it follows that $\mathbb{J}$ is the inverse functor of $\mathbb{F}$. Since this is routine, we leave to the reader the details of the proof of the following result. 

\begin{theorem} \label{Duality for DLFI-modules}
The categories $\mathsf{FIMod}$ and $\mathsf{USp}$ are dually equivalent.
\end{theorem}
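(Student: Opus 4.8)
The plan is to assemble the dual equivalence out of the pieces already in place, promoting the adjunction $\mathbb{G} \dashv \mathbb{F}$ of Theorem~\ref{Functor 2 DLFI} to an equivalence once we restrict the frame side to Urquhart spaces. First I would verify that $\mathbb{F}$ and $\mathbb{J}$ are genuine functors between $\mathsf{FIMod}^{op}$ and $\mathsf{USp}$ (not merely $\mathsf{FIFram}$): for $\mathbb{F}$ this amounts to checking that $\mathcal{F}_{\mathcal{M}}$ satisfies conditions (1)--(5) of Definition~\ref{Associated Spaces}, which is exactly the three-point discussion given just before Theorem~\ref{Duality for DLFI-modules} (Priestley spaces, condition (3) via $f(\beta_{\bf A}(a),\beta_{\bf B}(b))=\beta_{\bf A}(f(a,b))$ and its $i$-analogue, and conditions (4)--(5) since $\epsilon_{\mathcal{X}(\bf A)}=\mathrm{id}$); and that $(\alpha^{*},\gamma^{*})$ is an U-map, i.e.\ continuous and monotone (Priestley) plus (M1),(M2),(N1),(N2), which is Lemma~\ref{Representation of FIDL morphisms}. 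For $\mathbb{J}$ one uses Lemma~\ref{Spaces are Frames} to see an Urquhart space is a FI-frame, so $\mathcal{M}_{\mathcal{U}}$ is a FIDL-module by Lemma~\ref{representation}, and $(g^{*},h^{*})$ is a FIDL-homomorphism by Lemma~\ref{Representation of FI-frame morphisms}; functoriality is componentwise composition.

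Next I would exhibit the two natural isomorphisms. On the algebra side, for a FIDL-module $\mathcal{M}=\langle {\bf A},{\bf B},f,i\rangle$ the pair $(\beta_{\bf A},\beta_{\bf B})$ is the candidate iso $\mathcal{M}\to \mathcal{M}_{\mathcal{F}_{\mathcal{M}}}=\langle \mathcal{C}(\mathcal{X}({\bf A})),\mathcal{C}(\mathcal{X}({\bf B})),f_{\mathcal{F}_{\mathcal{M}}},i_{\mathcal{F}_{\mathcal{M}}}\rangle$. Each of $\beta_{\bf A},\beta_{\bf B}$ is a bounded-lattice isomorphism by Priestley/Stone, so by Lemma~\ref{Isos FDL} it suffices to check that $(\beta_{\bf A},\beta_{\bf B})$ is a FIDL-homomorphism, i.e.\ $\beta_{\bf A}(f(a,b))=f_{\mathcal{F}_{\mathcal{M}}}(\beta_{\bf A}(a),\beta_{\bf B}(b))$ and $\beta_{\bf A}(i(b,a))=i_{\mathcal{F}_{\mathcal{M}}}(\beta_{\bf B}(b),\beta_{\bf A}(a))$; unwinding the definitions (\ref{f_in P}) and (\ref{i_in P}) of $f_{\mathcal{F}}$, $i_{\mathcal{F}}$ in terms of $R_{\mathcal{M}}$, $T_{\mathcal{M}}$, these two identities are precisely the statements of Lemma~\ref{lem_1}(1) and (2). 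Naturality in $\mathcal{M}$ is the commuting square $\beta_{\hat{\bf A}}\circ\alpha=\alpha^{**}\circ\beta_{\bf A}$, which is the standard Stone naturality. On the space side, for an Urquhart space $\mathcal{U}$ the pair $(\epsilon_X,\epsilon_Y)$ is the candidate iso $\mathcal{U}\to \mathcal{F}_{\mathcal{M}_{\mathcal{U}}}$; each $\epsilon_X,\epsilon_Y$ is a Priestley homeomorphism and order-isomorphism, so by Lemma~\ref{Isos FIF} it is enough that $(\epsilon_X,\epsilon_Y)$ is an U-map, and that $(\epsilon_X,\epsilon_Y)^{-1}$ is too; the only nontrivial point is that it transports $R$ to $R_{\mathcal{M}_{\mathcal{U}}}$ and $T$ to $T_{\mathcal{M}_{\mathcal{U}}}$. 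One direction of this, namely that $f(\epsilon_X(y),\epsilon_Y(x))\subseteq \epsilon_X(z)$ forces $(y,x,z)\in R$, is built into conditions (4) and (5) of Definition~\ref{Associated Spaces}; the reverse direction, that $(y,x,z)\in R$ implies the inclusion, follows from condition (3) ($f$ of clopen increasing sets is clopen increasing) together with the compactness argument already rehearsed inside the proof of Lemma~\ref{Spaces are Frames}.

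Finally I would package these into the adjoint-equivalence conclusion: by Theorem~\ref{Functor 2 DLFI} we already know $\mathbb{G}\dashv \mathbb{F}$ with counit an isomorphism; restricting the left adjoint's domain from $\mathsf{FIFram}$ to the full subcategory $\mathsf{USp}$, the composite $\mathbb{G}\circ\mathbb{F}=\mathbb{J}\circ\mathbb{F}$ carries $\mathcal{M}$ to $\mathcal{M}_{\mathcal{F}_{\mathcal{M}}}$, and the unit of the adjunction at $\mathcal{M}$ is exactly $(\beta_{\bf A},\beta_{\bf B})$, which we have just shown is an isomorphism. An adjunction whose unit and counit are both natural isomorphisms is an adjoint equivalence; hence $\mathbb{F} \colon \mathsf{FIMod}^{op}\to\mathsf{USp}$ and $\mathbb{J} \colon \mathsf{USp}\to\mathsf{FIMod}^{op}$ are quasi-inverse equivalences, which is the asserted dual equivalence of $\mathsf{FIMod}$ and $\mathsf{USp}$. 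Alternatively, and perhaps more transparently for the reader, one verifies directly that $\mathbb{J}\circ\mathbb{F}\cong \mathrm{Id}_{\mathsf{FIMod}^{op}}$ via $(\beta_{\bf A},\beta_{\bf B})$ and $\mathbb{F}\circ\mathbb{J}\cong \mathrm{Id}_{\mathsf{USp}}$ via $(\epsilon_X,\epsilon_Y)$. The main obstacle is bookkeeping rather than mathematics: correctly matching the definitions of $R_{\mathcal{M}},T_{\mathcal{M}}$ against $f_{\mathcal{F}},i_{\mathcal{F}}$ so that Lemma~\ref{lem_1} really does yield the homomorphism identities for $(\beta_{\bf A},\beta_{\bf B})$, and checking the $T$-side U-map conditions for $(\epsilon_X,\epsilon_Y)^{-1}$, where the universal/$\exists$--$\forall$ asymmetry between $R$ and $T$ makes the argument slightly less symmetric than it first appears; everything else is a direct appeal to Priestley duality and to the lemmas of Sections~\ref{Representation of FIDL-modules} and~\ref{Topological duality for FIDL-modules}.
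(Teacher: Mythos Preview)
Your direct-verification route (the ``alternatively'' paragraph) is correct and is exactly what the paper has in mind: the paper's own proof is just the sentence ``as a straight application of Priestley duality, $\mathbb{J}$ is the inverse functor of $\mathbb{F}$; details left to the reader'', and your outline via $(\beta_{\bf A},\beta_{\bf B})$, $(\epsilon_X,\epsilon_Y)$, Lemma~\ref{lem_1}, Lemma~\ref{Isos FDL}, Lemma~\ref{Isos FIF}, and conditions (3)--(5) of Definition~\ref{Associated Spaces} fills those details in the intended way.

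One caution about your first packaging, though: you write ``restricting the left adjoint's domain from $\mathsf{FIFram}$ to the full subcategory $\mathsf{USp}$, the composite $\mathbb{G}\circ\mathbb{F}=\mathbb{J}\circ\mathbb{F}$''. This identification is not correct. The functor $\mathbb{G}$ of Theorem~\ref{Functor 2 DLFI} sends a frame $\mathcal{F}$ to $\langle \mathcal{P}_i(X),\mathcal{P}_i(Y),\ldots\rangle$, whereas $\mathbb{J}$ sends an Urquhart space to $\langle \mathcal{C}(X),\mathcal{C}(Y),\ldots\rangle$; these are different FIDL-modules, and $\mathsf{USp}$ is not a full subcategory of $\mathsf{FIFram}$ (U-maps carry an extra continuity requirement). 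In particular the unit of $\mathbb{G}\dashv\mathbb{F}$ at $\mathcal{M}$ lands in $\mathcal{P}_i(\mathcal{X}({\bf A}))$ and is \emph{not} an isomorphism, so Theorem~\ref{Functor 2 DLFI} alone does not upgrade to an equivalence by restriction. The passage to clopen sets is precisely what repairs this, and that is your second argument, not a corollary of the first. So drop the ``$\mathbb{G}\circ\mathbb{F}=\mathbb{J}\circ\mathbb{F}$'' line and run only the direct verification; then your proof is complete and matches the paper's intended argument.
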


\section{Congruences of FIDL-modules}  \label{Congruences of FIDL-modules}

In this section we introduce the concept of congruence in the class of FIDL-modules and we show how through the duality of Section \ref{Topological duality for FIDL-modules} we can provide a characterization of these in terms of certain pairs of closed subsets of the associated Urquhart space. This result will allows us to give a topological bi-spaced characterization of the simple and subdirectly irreducible FIDL-modules.  

\begin{definition} \label{DLFI Congruences}
Let $\mathcal{M}$ be a FIDL-module. Let $\theta_{\bf{A}}$ be a congruence of $\bf{A}$ and $\theta_{\bf{B}}$ a congruence of $\bf{B}$. 
\begin{itemize}
\item[(C1)] A pair $\left( \theta_{\bf{A}}, \theta_{\bf{B}} \right) \subseteq A^{2} \times B^{2}$ is called a {\rm{FDL-congruence of $\langle {\bf{A}}, {\bf{B}}, f \rangle$}}, if for every $(a,c) \in \theta_{\bf{A}}$ and every $(b,d) \in \theta_{\bf{B}}$, we have $(f(a,b), f(c,d)) \in \theta_{\bf{A}}$.
\item[(C2)] A pair $\left( \theta_{\bf{A}}, \theta_{\bf{B}} \right) \subseteq A^{2} \times B^{2}$ is called an {\rm{IDL-congruence of $\langle {\bf{A}}, {\bf{B}}, i \rangle$}}, if for every $(a,c) \in \theta_{\bf{A}}$ and every $(b,d) \in \theta_{\bf{B}}$, we have $(i(b,a), i(d,c)) \in \theta_{\bf{A}}$.
\end{itemize}
Moreover, a pair $\left( \theta_{\bf{A}}, \theta_{\bf{B}} \right) \subseteq A^{2} \times B^{2}$ is called a {\rm{FIDL-congruence of $\mathcal{M}$}}, if $\left( \theta_{\bf{A}}, \theta_{\bf{B}} \right)$ is a FDL-congruence and an IDL-congruence.
\end{definition}

If $\mathcal{M}$ is a FIDL-module, then we write $Con_{f}(\mathcal{M})$ for the set of all FDL-congruences, $Con_{i}(\mathcal{M})$ for the set of all IDL-congruences, and $Con(\mathcal{M})$ for the set of all FIDL-congruences of $\mathcal{M}$. It is not hard to see that $Con(\mathcal{M})$ is an algebraic lattice.

We now proceed to introduce the topological notions required for our characterization. Let $\mathcal{U}$ be an Urquhart space. We define the following subsets of $X$ and $Y$:

\begin{itemize}
\item For every $x,z \in X$ and every $y \in Y$, we have
\[R^{1}(y,z) = \{x \in X \colon x {\hspace{0.1cm}} \text{is maximal in $X$ and} {\hspace{0.1cm}} (x,y,z) \in R\},\]
\[R^{2}(x,z) = \{y \in Y \colon y {\hspace{0.1cm}} \text{is maximal in $Y$ and} {\hspace{0.1cm}} (x,y,z) \in R\},\]
\[T^{1}(x,z) = \{y \in Y \colon y {\hspace{0.1cm}} \text{is maximal in $Y$ and} {\hspace{0.1cm}} (y,x,z) \in T\},\]
\[T^{3}(y,x) = \{z \in X \colon z {\hspace{0.1cm}} \text{is minimal in $X$ and} {\hspace{0.1cm}} (y,x,z)\in T\}.\]
\item For every $x,z \in X$, we have 
\[Max(R^{-1}(z)) =\{ (x,y) \in X \times Y \colon x \in R^{1}(y,z) {\hspace{0.1cm}} \text{and} {\hspace{0.1cm}} y \in R^{2}(x,z)\},\]
\[\mathcal{D}(x)  =\{ (y,z) \in Y \times X \colon y \in T^{1}(x,z) {\hspace{0.1cm}} \text{and} {\hspace{0.1cm}} z \in T^{3}(y,x)\}.\]
\end{itemize}

\begin{definition}   \label{strongly closed sets}
Let $\mathcal{U}$ be an Urquhart space. Let $Z_{1}$ be a closed set of $X$ and $Z_{2}$ a closed set of $Y$.
\begin{itemize}
\item[(CL1)] A pair $(Z_{1},Z_{2}) \subseteq X \times Y$ is called a {\rm{$R$-closed set of $\mathcal{U}$}}, if for every $z \in Z_{1}$, we have $Max(R^{-1}(z)) \subseteq Z_{1} \times Z_{2}$.
\item[(CL2)] A pair $(Z_{1},Z_{2}) \subseteq X \times Y$ is called a {\rm{$T$-closed set of $\mathcal{U}$}}, if for every $x \in Z_{1}$, we have $\mathcal{D}(x) \subseteq Z_{2} \times Z_{1}$.
\end{itemize}
Moreover, a pair $(Z_{1},Z_{2}) \subseteq X \times Y$ is called a {\rm{strongly closed set of $\mathcal{U}$}}, if $(Z_{1},Z_{2})$ is both a $R$-closed set and a $T$-closed set. 
\end{definition}

If $\mathcal{U}$ is an Urquhart space, then we write $\mathcal{C}_{f}(\mathcal{U})$ for the set of all $R$-closed sets of $\mathcal{U}$, $\mathcal{C}_{i}(\mathcal{U})$ for the set of all $T$-closed sets of $\mathcal{U}$, and $\mathcal{C}_{s}(\mathcal{U})$ for the set of all strongly closed sets of $\mathcal{U}$.

\begin{theorem}  \label{Characterization of congruences}
Let $\mathcal{M}$ be a FIDL-module and $\mathcal{F}_{\mathcal{M}}$ be the Urquhart space associated of $\mathcal{M}$. We consider the correspondence $(Z_{1},Z_{2}) \to (\theta(Z_{1}),\theta(Z_{2}))$ for every $Z_{1} \in \mathcal{C}( \mathcal{X}(\textbf{A}))$ and every $Z_{2} \in \mathcal{C}( \mathcal{X}(\textbf{B}))$, where $\theta(-)$ is given by (\ref{congruence-closed}). Then:
\begin{enumerate}
\item There exists an anti-isomorphism between $\mathcal{C}_{f}(\mathcal{F}_{\mathcal{M}})$ and $Con_{f}(\mathcal{M})$.
\item There exists an anti-isomorphism between $\mathcal{C}_{i}(\mathcal{F}_{\mathcal{M}})$ and $Con_{i}(\mathcal{M})$.
\item There exists an anti-isomorphism between $\mathcal{C}_{s}(\mathcal{F}_{\mathcal{M}})$ and $Con(\mathcal{M})$.
\end{enumerate}
\end{theorem}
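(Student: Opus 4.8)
The plan is to leverage the classical Priestley duality fact recalled in Section~\ref{Preliminaries}: for a bounded distributive lattice $\mathbf{A}$, the map $Y \mapsto \theta(Y)$ is an anti-isomorphism between the lattice of closed subsets of $\mathcal{X}(\mathbf{A})$ and the lattice of congruences of $\mathbf{A}$. Applying this separately to $\mathbf{A}$ and $\mathbf{B}$ gives, for free, an anti-isomorphism between pairs of closed sets $(Z_1,Z_2) \in \mathcal{C}(\mathcal{X}(\mathbf{A})) \times \mathcal{C}(\mathcal{X}(\mathbf{B}))$ and pairs of congruences $(\theta_{\mathbf{A}},\theta_{\mathbf{B}}) \in \mathrm{Con}(\mathbf{A}) \times \mathrm{Con}(\mathbf{B})$. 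So the whole theorem reduces to showing that, under this correspondence, the $R$-closed condition (CL1) is equivalent to the FDL-congruence condition (C1), the $T$-closed condition (CL2) is equivalent to the IDL-congruence condition (C2), and then (3) follows immediately by intersecting (1) and (2). Thus I would prove (1) and (2) and note that (3) is formal.

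For (1), I would argue both directions. Assume $(Z_1,Z_2)$ is $R$-closed and take $(a,c)\in\theta(Z_1)$, $(b,d)\in\theta(Z_2)$; I must show $\beta_{\mathbf{A}}(f(a,b))\cap Z_1 = \beta_{\mathbf{A}}(f(c,d))\cap Z_1$. By symmetry it suffices to show $\subseteq$. Fix $z\in Z_1$ with $f(a,b)\in z$. By Lemma~\ref{lem_1}(1) there are $Q\in\mathcal{X}(\mathbf{A})$, $\hat{R}\in\mathcal{X}(\mathbf{B})$ with $(Q,\hat{R},z)\in R_{\mathcal{M}}$, $a\in Q$, $b\in\hat{R}$. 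By a Zorn/maximality argument (as in Theorem~\ref{theo_1}) one can enlarge $Q,\hat{R}$ to a pair $(x,y)$ that is maximal with $(x,y,z)\in R_{\mathcal{M}}$ and still contains $a,b$; crucially such a maximal pair satisfies $x\in R^1(y,z)$ and $y\in R^2(x,z)$, hence $(x,y)\in \mathrm{Max}(R_{\mathcal{M}}^{-1}(z))\subseteq Z_1\times Z_2$. Since $a\in x\in Z_1$, $c\in Z_1$, and $a\,\theta(Z_1)\,c$, we get $c\in x$; likewise $d\in y$; so $f(c,d)\in f(x,y)\subseteq z$, giving $f(c,d)\in z$. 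Conversely, assume (C1) holds for $(\theta(Z_1),\theta(Z_2))$ and suppose (CL1) fails: there is $z\in Z_1$ and $(x,y)\in\mathrm{Max}(R_{\mathcal{M}}^{-1}(z))$ with, say, $x\notin Z_1$. Using that $Z_1$ is closed and $\beta_{\mathbf{A}}[A]$ separates points (Priestley), pick $a\in A$ with $x\notin\beta_{\mathbf{A}}(a)$ but $Z_1\subseteq\beta_{\mathbf{A}}(a)$, so that $a\,\theta(Z_1)\,1$; similarly, using maximality of $x$ in the fiber and prime-filter separation, produce a suitable $b$, apply (C1) to get $(f(a,b),f(1,b))=(f(a,b),f(1,b))\in\theta(Z_1)$, and derive a contradiction with $f(a,b)\notin z$ via Lemma~\ref{lem_1}(1) evaluated at the maximal pair $(x,y)$. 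The symmetric case $y\notin Z_2$ is handled the same way using the $\mathbf{B}$-side separation. Part (2) is entirely analogous, with Lemma~\ref{lem_1}(2), the relation $T_{\mathcal{M}}$, the sets $T^1(x,z)$, $T^3(y,x)$ and $\mathcal{D}(x)$, and minimality replacing maximality in the last coordinate.

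The main obstacle I anticipate is the "enlargement to a maximal pair" step and the verification that maximal pairs in $R_{\mathcal{M}}^{-1}(z)$ are exactly the pairs of the form $(x,y)$ with $x\in R^1(y,z)$ and $y\in R^2(x,z)$ — i.e.\ that componentwise maximality of prime filters in the fiber coincides with joint maximality. This needs the frame condition~(\ref{condition_R}) (to know enlarging one coordinate keeps us in $R_{\mathcal{M}}$ only if we shrink the output coordinate, which here is fixed at $z$, and that prime filters above $z$ cannot be properly enlarged without leaving the fiber) together with compactness of the Priestley topology to guarantee maximal elements exist; this is where the bulk of the careful bookkeeping lies. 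Once that lemma is isolated, the rest is a routine transfer along the classical correspondence, and monotonicity of $\theta(-)$ (larger closed set $\leftrightarrow$ smaller congruence) immediately upgrades the bijections of (1) and (2) to anti-isomorphisms of lattices, with (3) obtained by restricting to $\mathcal{C}_s(\mathcal{F}_{\mathcal{M}})=\mathcal{C}_f(\mathcal{F}_{\mathcal{M}})\cap\mathcal{C}_i(\mathcal{F}_{\mathcal{M}})$ and $\mathrm{Con}(\mathcal{M})=\mathrm{Con}_f(\mathcal{M})\cap\mathrm{Con}_i(\mathcal{M})$.
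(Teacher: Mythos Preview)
Your overall strategy matches the paper's: reduce to the classical Priestley anti-isomorphism componentwise, then show that (CL1) $\Leftrightarrow$ (C1) and (CL2) $\Leftrightarrow$ (C2), with (3) formal. Your forward direction of (1) is exactly the paper's argument, and your identification of the Zorn/maximal-pair step as the delicate point is correct.

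There is, however, a genuine gap in your converse direction of (1). You propose to separate the offending prime filter $x\notin Z_1$ from $Z_1$ by a single increasing clopen, i.e.\ to find $a\in A$ with $Z_1\subseteq\beta_{\mathbf{A}}(a)$ and $x\notin\beta_{\mathbf{A}}(a)$, yielding $a\,\theta(Z_1)\,1$. This is not in general possible in a Priestley space: if some $P\in Z_1$ satisfies $P\subsetneq x$, then every increasing clopen containing $P$ also contains $x$. The correct separation is by a \emph{basic} clopen $\beta_{\mathbf{A}}(a)\cap\beta_{\mathbf{A}}(b)^{c}$: since $Z_1$ is closed and $x\notin Z_1$, there exist $a,b\in A$ with $a\in x$, $b\notin x$, and $\beta_{\mathbf{A}}(a)\cap\beta_{\mathbf{A}}(b)^{c}\cap Z_1=\emptyset$, which is exactly $(a\wedge b,a)\in\theta(Z_1)$. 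With this in hand, maximality of $x$ in $R^{1}(y,z)$ says that the enlarged filter ${\rm Fig}_{\mathbf{A}}(x\cup\{b\})$ no longer satisfies $f(\cdot\,,y)\subseteq z$, so there are $q\in x$, $r\in y$ with $f(q\wedge b,r)\notin z$. Now (C1) applied to $(a\wedge b\wedge q,\,a\wedge q)\in\theta(Z_1)$ gives $f(a\wedge b\wedge q,r)\,\theta(Z_1)\,f(a\wedge q,r)$; since $a\wedge q\in x$ forces $f(a\wedge q,r)\in z\in Z_1$, we get $f(a\wedge b\wedge q,r)\in z$, hence $f(q\wedge b,r)\in z$ by monotonicity --- a contradiction. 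Your sketch ``$(f(a,b),f(1,b))\in\theta(Z_1)$ and contradict $f(a,b)\notin z$'' does not produce this contradiction, because nothing in your setup forces $f(1,b)\in z$ or $f(a,b)\notin z$. The analogous correction is needed in (2), where for $Q\notin Z_1$ one must additionally use \emph{minimality} of $Q$ in $T^{3}(R,P)$ together with an ideal ${\rm Idg}_{\mathbf{A}}(Q^{c}\cup\{y\})$ meeting $i(R,P)$, rather than a single-element separation.
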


\begin{proof}
Since $(3)$ is clearly a straight consequence of $(1)$ and $(2)$, we only prove such items.

$(1)$ Let us assume that $(Z_{1},Z_{2})$ is a $R_{\mathcal{M}}$-closed set of $\mathcal{F}_{\mathcal{M}}$. We prove that $(\theta(Z_{1}),\theta(Z_{2}))$ is a FDL-congruence. Let $(x,y) \in \theta(Z_{1})$ and $(b,c) \in \theta(Z_{2})$. If $P \in \beta_{\bf{A}}(f(x,b)) \cap Z_{1}$, then $f(x,b) \in P$. By Lemma \ref{lem_1}, there exist $Q \in \mathcal{X}(\textbf{A})$ and $R \in \mathcal{X}(\textbf{B})$ such that $f(Q,R) \subseteq P$, $x \in Q$ and $b \in R$. Using Zorn's Lemma, it is easy to prove that there are $Q' \in \mathcal{X}(\textbf{A})$ and $R' \in \mathcal{X}(\textbf{B})$ maximals such that $(Q',R') \in Max(R_{\mathcal{M}}^{-1}(P))$. Since $(Z_{1},Z_{2})$ is a $R_{\mathcal{M}}$-closed set of $\mathcal{F}_{\mathcal{M}}$ by assumption, then it follows that $Q' \in Z_{1}$ and $R' \in Z_{2}$. Thus, $f(Q',R') \subseteq P$, $y \in Q'$ and $c \in R'$. Then, by Lemma \ref{lem_1}, we have $f(y,c) \in P$. So, $P \in \beta_{\bf{A}}(f(y,c)) \cap Z_{1}$. The other inclusion is similar. Therefore $(f(x,b), f(y,c)) \in \theta(Z_{1})$.

For the converse, let $(\theta(Z_{1}),\theta(Z_{2}))$ be a FDL-congruence and suppose that the pair $(Z_{1},Z_{2})$ is not a $R_{\mathcal{M}}$-closed set of $\mathcal{F}_{\mathcal{M}}$. Then, there exist $P \in Z_{1}$ and $(Q,R) \in \mathcal{X}(\textbf{A}) \times \mathcal{X}(\textbf{B})$ such that $(Q,R) \in Max(R_{\mathcal{M}}^{-1}(P))$ and $(Q,R) \notin Z_{1} \times Z_{2}$. Suppose that $Q \notin Z_{1}$. Since $Z_{1}$ is a closed set of $\mathcal{X}(\textbf{A})$, then there exist $a,b \in A$ such that $a \in Q$, $b \notin Q$ and $(a \wedge b,a) \in \theta (Z_{1})$. Let us consider the filter ${\rm{Fig}}_{\textbf{A}}( Q \cup \{b\})$. As $Q \in R^{1}_{\mathcal{M}}(R,P)$, then $Q$ is maximal and $f({\rm{Fig}}_{\textbf{A}}(Q \cup \{b\}), R) \nsubseteq P$. So, there exist $q \in Q$ and $r \in R$ such that $f(q \wedge b, r) \notin P$. Since $(\theta(Z_{1}),\theta(Z_{2}))$ is a FDL-congruence, it follows that $(f(a \wedge b \wedge q, r), f(a \wedge q, r)) \in \theta(Z_{1})$. Now, since $a \wedge q \in Q$, then $f(a \wedge q, r) \in f(Q,R) \subseteq P$. Hence $f(a \wedge b \wedge q, r) \in P$. Notice that $f(a \wedge b \wedge q, r) \leq f(b \wedge q, r )$, therefore $f(b \wedge q, r) \in P$, which is a contradiction. Then $Q \in Z_{1}$. The proof of $R \in Z_{2}$ is similar. So, $(Z_{1},Z_{2})$ is a $R_{\mathcal{M}}$-closed.

$(2)$ Assume that $(Z_{1},Z_{2})$ is a $T_{\mathcal{M}}$-closed set of $\mathcal{F}_{\mathcal{M}}$. Let $(x,y) \in \theta(Z_{1})$ and $(b,c) \in \theta(Z_{2})$. We will see that $(\theta(Z_{1}), \theta(Z_{2}))$ is an IDL-congruence of ${\bf{A}}$. Let $P \in \mathcal{X}(\textbf{A})$. Suppose that $P \in \beta_{\bf{A}}(i(b,x)) \cap Z_{1}$ and $P \notin \beta_{\bf{A}}(i(c,y)) \cap Z_{1}$, i.e., $i(b,x) \in P$ and $i(c,y) \notin P$. By Lemma \ref{lem_1}, there exist $R \in \mathcal{X}(\textbf{B})$ and $Q \in \mathcal{X}(\textbf{A})$ such that $i(R,P) \subseteq Q$, $c \in R$ and $y \notin Q$. Note that from Zorn's Lemma it is not hard to see that there are $R' \in \mathcal{X}(\textbf{B})$ and $Q' \in \mathcal{X}(\textbf{A})$ such that $(R',Q') \in \mathcal{D}(P)$. Since $(Z_{1},Z_{2})$ is a $T_{\mathcal{M}}$-closed set, then $R' \in Z_{2}$ and $Q' \in Z_{1}$. Due to $R \subseteq R'$, we have $c \in R'$ and because $(b,c) \in \theta(Z_{2})$, it follows that $b \in R'$. On the other hand, since $i(b,x) \in P$, $i(R',P) \subseteq Q'$ and $b \in R'$, by Lemma \ref{lem_1} we have $x \in Q'$. Then $(x,y) \in \theta(Z_{1})$, $y \in Q' \subseteq Q$ and $y \in Q$, which is a contradiction. We conclude that $(\theta(Z_{1}), \theta(Z_{2}))$ is an IDL-congruence.

Conversely, we assume $(\theta(Z_{1}), \theta(Z_{2}))$ is an IDL-congruence. Suppose that $(Z_{1},Z_{2})$ is not a $T_{\mathcal{M}}$-closed set of $\mathcal{F}_{\mathcal{M}}$. Then there exist $P \in Z_{1}$, $Q \in \mathcal{X}(\textbf{A})$ and $R \in \mathcal{X}(\textbf{B})$ such that $(R,Q) \in \mathcal{D}(P)$ and $(R,Q) \notin Z_{2} \times Z_{1}$. If $R \notin Z_{2}$, then since $Z_{2}$ is a closed set of $\mathcal{X}(\textbf{B})$ there exist $b,c \in B$ such that $b \in R$, $c \notin R$ and $(b \wedge c, b) \in \theta(Z_{2})$. Let us consider ${\rm{Fig}}_{\textbf{B}}(R \cup \{c\})$. Since $R \in T_{\mathcal{M}}^{1}(P,Q)$, then $i({\rm{Fig}}_{\textbf{B}}(R \cup \{c\}), P) \nsubseteq Q$, i.e., there exists $x \in A$ such that $x \notin Q$ and $p \leq i(r \wedge c, z)$, for some $r \in R$ and $p \in P$. Hence $i(r \wedge c, z) \in P$ and by Proposition \ref{propo_1}, $i(r \wedge b \wedge c, z) \in P$. On the other hand, since $(\theta(Z_{1}), \theta(Z_{2}))$ is a congruence, we obtain that $(i(r \wedge b \wedge c, z), i(r \wedge b, z)) \in \theta(Z_{1})$ and $i(r \wedge b, z) \in P$. So, $i(R,P) \subseteq Q$, $r \wedge b \in R$ and by Lemma \ref{lem_1} we have $z \in Q$, which is a contradiction. If $Q \notin Z_{1}$, then there exist $x,y \in A$ such that $x \in Q$, $y \notin Q$ and $(x \wedge y, x) \in \theta(Z_{1})$. Let us consider $I={\rm{Idg}}_{\textbf{A}}(Q^{c} \cup \{y\})$. Observe that $I \cap i(R,P) \neq \emptyset$, because otherwise from the Prime Filter Theorem, there would exists $H \in \mathcal{X}(\textbf{A})$ such that $i(R,P) \subseteq H$, $H \subseteq Q$ and $x \notin H$ which is absurd since $Q$ is minimal. Thus, there exist $a \in A$ such that $a \leq q \vee x$ and $p \leq i(r,a)$, for some $q \notin Q$, $r \in R$ and $p \in P$. So, by Proposition \ref{propo_1}, $p \leq i(r,a) \leq i(r, q \vee x)$ and $i(p, q \vee x) \in P$. Therefore, since $(\theta(Z_{1}), \theta(Z_{2}))$ is a congruence, it follows that $(i(r, q \vee (x \wedge y)), i(r, q \vee x)) \in \theta(Z_{1})$. Hence $i(r, q \vee (x \wedge y)) \in P$. Since $i(R,P) \subseteq Q$ and $r \in R$, then by Lemma \ref{lem_1} we get that $q \vee (x \wedge y)  \in Q$, which is a contradiction because $Q$ is prime. Then $(Z_{1},Z_{2})$ is a $T_{\mathcal{M}}$-closed set.
\end{proof}

Let $\{\mathcal{M}_{k}\}_{k\in K}$ be a family of FIDL-modules, with $\mathcal{M}_{k}=\langle \textbf{A}_{k}, \textbf{B}_{k},f_{k},i_{k}\rangle$. Then 
\begin{equation*}
\underset{k\in K}{\prod}\mathcal{M}_{k} = \left\langle \underset{k\in K}{\prod}{\bf{A}_{k}}, \underset{k\in K}{\prod}{\bf{B}_{k}}, f, i \right\rangle
\end{equation*}
has a FIDL-module structure, where $f(a,b)(k)=f_{k}(a(k),b(k))$ and $i(b,a)(k)=i_{k}(b(k),a(k))$, for every $k \in K$. Let $\pi^{\bf{A}}_{k}: \underset{k\in K}{\prod}{\bf{A}_{k}} \rightarrow \bf{A}_{k}$ and $\pi^{\bf{B}}_{k}:\underset{k\in K}{\prod}{\bf{B}_{k}} \rightarrow \bf{B}_{k}$ be the projection homomorphisms. Note that the pair $(\pi^{\bf{A}}_{k}, \pi^{\bf{B}}_{k})$ is a FIDL-homomorphism, for every $k \in K$. It is no hard to see that $\underset{k\in K}{\prod}\mathcal{M}_{k}$ together with the family $\{(\pi^{\bf{A}}_{k}, \pi^{\bf{B}}_{k})\}_{k \in K}$ is in fact the categorical product of $\{\mathcal{M}_{k}\}_{k\in K}$.

Let $(\alpha,\gamma)$ be a FIDL-homomorphism. We say that $(\alpha,\gamma)$ is a 1-1 FIDL-homomorphism if $\alpha$ and $\gamma$ are 1-1, and similarly, we say that $(\alpha,\gamma)$ is a onto FIDL-homomorphism if $\alpha$ and $\gamma$ are onto.

If $\mathcal{M}$ is a FIDL-module, then we introduce the following concepts: 
\begin{itemize}
\item We will say that $\mathcal{M}$ is a {\it{subdirect product}} of a family $\{\mathcal{M}_{k}\}_{k\in K}$ of FIDL-modules, if there exists a 1-1 FIDL-homomorphism 
\begin{equation*}
(\alpha, \gamma) \colon \mathcal{M} \to \underset{k\in K}{\prod}\mathcal{M}_{k}
\end{equation*}
such that $(\pi^{\bf{A}}_{k} \alpha, \pi^{\bf{B}}_{k} \gamma)$ is an onto FIDL-homomorphism, for every $k\in K$.
\item We will say that $\mathcal{M}$ is \emph{subdirectly irreducible} if for every family of FIDL-modules $\{\mathcal{M}_{k}\}_{k\in K}$ and 1-1 FIDL-homomorphism 
\begin{equation*}
(\alpha, \gamma) \colon \mathcal{M} \to \underset{k\in K}{\prod}\mathcal{M}_{k}
\end{equation*}
there exists a $k\in K$ such that $(\pi^{\bf{A}}_{k} \alpha, \pi^{\bf{B}}_{k} \gamma)$ is an isomorphism of FIDL-modules.
\item We will say that $\mathcal{M}$ is \emph{simple} if the lattice of the FIDL-congruences has only two elements.
\end{itemize}

The following result is immediate from Theorem \ref{Characterization of congruences}.

\begin{corol}  \label{subdirectly irreducible DLFI-modules}
Let $\mathcal{M}$ be a FIDL-module. Then $\mathcal{M}$ is subdirectly irreducible if and only if $\mathcal{M}$ is trivial or there exists a minimal non-trivial FIDL-congruence in $\mathcal{M}$.
\end{corol}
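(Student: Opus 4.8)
The plan is to reduce the statement to a routine lattice-theoretic dichotomy in $Con(\mathcal{M})$, which — as already noted — is an algebraic, hence complete, lattice whose least element is the diagonal congruence $\Delta=(\Delta_{\mathbf{A}},\Delta_{\mathbf{B}})$ and whose greatest element is $\nabla$; here I use that an arbitrary intersection of FIDL-congruences is again a FIDL-congruence (verify (C1), (C2) componentwise), so that arbitrary meets in $Con(\mathcal{M})$ are intersections. Throughout, ``non-trivial'' means strictly above $\Delta$, and by a \emph{minimal non-trivial} FIDL-congruence I mean the least element of $Con(\mathcal{M})\setminus\{\Delta\}$ (the monolith), when it exists. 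Before the two implications I would record two bookkeeping facts, both immediate from Definition \ref{DLFI Congruences}: (a) for a FIDL-homomorphism $(\alpha,\gamma)$ the pair $(\ker\alpha,\ker\gamma)$ is a FIDL-congruence, and $(\alpha,\gamma)$ is $1$-$1$ if and only if $(\ker\alpha,\ker\gamma)=\Delta$; and (b) for a FIDL-congruence $\theta=(\theta_{\mathbf{A}},\theta_{\mathbf{B}})$ the quotient $\mathcal{M}/\theta=\langle\mathbf{A}/\theta_{\mathbf{A}},\mathbf{B}/\theta_{\mathbf{B}},\bar{f},\bar{i}\rangle$ is a FIDL-module and the pair of canonical projections $q_\theta\colon\mathcal{M}\to\mathcal{M}/\theta$ is an onto FIDL-homomorphism with kernel $\theta$.

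For the implication ($\Leftarrow$) the trivial case is vacuous, so I would assume $\mathcal{M}$ non-trivial with monolith $\mu$, and let $(\alpha,\gamma)\colon\mathcal{M}\to\prod_{k\in K}\mathcal{M}_k$ exhibit $\mathcal{M}$ as a subdirect product, each composite $(\pi^{\mathbf{A}}_k\alpha,\pi^{\mathbf{B}}_k\gamma)$ being onto. Setting $\theta_k=(\ker(\pi^{\mathbf{A}}_k\alpha),\ker(\pi^{\mathbf{B}}_k\gamma))\in Con(\mathcal{M})$, fact (a) together with the fact that $(\alpha,\gamma)$ is $1$-$1$ gives $\bigcap_{k\in K}\theta_k=\Delta$. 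If every $\theta_k$ were non-trivial then $\mu\le\theta_k$ for all $k$, hence $\mu\le\bigcap_k\theta_k=\Delta$, a contradiction; so $\theta_{k_0}=\Delta$ for some $k_0$, which by (a) makes $\pi^{\mathbf{A}}_{k_0}\alpha$ and $\pi^{\mathbf{B}}_{k_0}\gamma$ injective, hence — being also onto — isomorphisms of bounded distributive lattices, and then Lemma \ref{Isos FDL} upgrades $(\pi^{\mathbf{A}}_{k_0}\alpha,\pi^{\mathbf{B}}_{k_0}\gamma)$ to an isomorphism of FIDL-modules. Thus $\mathcal{M}$ is subdirectly irreducible.

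For ($\Rightarrow$) I would argue by contraposition: assume $\mathcal{M}$ is non-trivial and has no monolith. Then $\mathcal{D}:=Con(\mathcal{M})\setminus\{\Delta\}$ is non-empty (it contains $\nabla$, since $\Delta\neq\nabla$) and has no least element, so $\bigwedge\mathcal{D}=\Delta$, because a meet strictly above $\Delta$ would itself be a monolith. For each $\theta\in\mathcal{D}$ take $\mathcal{M}/\theta$ and $q_\theta$ as in (b); by the universal property of the categorical product $\prod_{\theta\in\mathcal{D}}\mathcal{M}/\theta$ established just before the corollary, these amalgamate into a FIDL-homomorphism $(\alpha,\gamma)\colon\mathcal{M}\to\prod_{\theta\in\mathcal{D}}\mathcal{M}/\theta$ with $(\pi^{\mathbf{A}}_\theta,\pi^{\mathbf{B}}_\theta)(\alpha,\gamma)=q_\theta$. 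Its kernel is $\bigcap_{\theta\in\mathcal{D}}\theta=\Delta$, so $(\alpha,\gamma)$ is $1$-$1$, and each $q_\theta$ is onto, so $\mathcal{M}$ is a subdirect product of $\{\mathcal{M}/\theta\}_{\theta\in\mathcal{D}}$. Subdirect irreducibility then forces some $q_{\theta_0}$ to be an isomorphism, whence $\theta_0=\ker q_{\theta_0}=\Delta$, contradicting $\theta_0\in\mathcal{D}$. Hence a monolith must exist.

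I expect the genuinely substantive step to be small: it is just the lattice dichotomy ``either $Con(\mathcal{M})$ has a least non-trivial element, or the non-trivial congruences meet down to $\Delta$'', combined with the passage from a subdirect representation to an intersection of kernels. The remaining points are routine but deserve care for these two-sorted structures — namely verifying facts (a) and (b) above (this is precisely the content behind Theorem \ref{Characterization of congruences}, which identifies $Con(\mathcal{M})$ with the lattice of strongly closed sets of $\mathcal{F}_{\mathcal{M}}$), and observing in the forward direction that the whole set $\{\mathcal{M}/\theta:\theta\neq\Delta\}$ may legitimately serve as a single index family because $Con(\mathcal{M})$ is a set. The only other subtlety is keeping the trivial module as a separate, vacuously handled case.
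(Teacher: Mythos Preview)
Your proof is correct and supplies exactly what the paper omits: the paper offers no argument beyond declaring the corollary ``immediate from Theorem~\ref{Characterization of congruences}''. Your route is the standard Birkhoff argument carried over to the two-sorted setting --- record that kernels of FIDL-homomorphisms are FIDL-congruences and that quotients exist, then exploit the complete lattice structure of $Con(\mathcal{M})$ to run the monolith dichotomy. This is the right way to fill the gap.

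One small comment: you do not actually need Theorem~\ref{Characterization of congruences} anywhere. Your facts (a) and (b) are direct verifications from Definition~\ref{DLFI Congruences} (and the closure of $Con(\mathcal{M})$ under arbitrary intersections is equally elementary), so the argument is purely algebraic. The paper's pointer to that theorem is more atmospheric than substantive for this particular corollary; the duality only genuinely enters in the surrounding Proposition~\ref{Simple algebras} and Theorem~\ref{Subdirectly irreducible algebras}. You could therefore drop the parenthetical linking (a), (b) to Theorem~\ref{Characterization of congruences} without loss.
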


If $\mathcal{U}$ is an Urquhart space, then from Theorem \ref{Characterization of congruences} it is clear that $\mathcal{C}_{s}(\mathcal{U})$ is an algebraic lattice. So, if $Z_{1} \times  Z_{2} \subseteq X \times Y$, let ${\rm{cl}}_{\mathcal{C}_{s}}(Z_{1},Z_{2})$ be the smallest element of $\mathcal{C}_{s}(\mathcal{U})$ which contains $Z_{1} \times Z_{2}$. Let $(x,y) \in X \times Y$. If there is no place to confusion, we write ${\rm{cl}}_{\mathcal{C}_{s}}(x,y)$ instead of ${\rm{cl}}_{\mathcal{C}_{s}}(\{x\},\{y\})$.

\begin{prop} \label{Simple algebras}
Let $\mathcal{M}$ be a FIDL-module and $\mathcal{F}_{\mathcal{M}}$ be the Urquhart space associated of $\mathcal{M}$. Then $\mathcal{M}$ is simple if and only if ${\rm{cl}}_{\mathcal{C}_{s}}(P,Q) = \mathcal{X}(\textbf{A}) \times  \mathcal{X}(\textbf{B})$, for every $ (P,Q) \in \mathcal{X}(\textbf{A}) \times \mathcal{X}(\textbf{B})$.
\end{prop}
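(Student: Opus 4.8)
The plan is to move everything to the dual side and exploit the anti-isomorphism of Theorem~\ref{Characterization of congruences}(3). That result says that $(Z_{1},Z_{2}) \mapsto (\theta(Z_{1}),\theta(Z_{2}))$ is an order-reversing bijection between $\mathcal{C}_{s}(\mathcal{F}_{\mathcal{M}})$ and $Con(\mathcal{M})$, so $\mathcal{M}$ is simple precisely when $\mathcal{C}_{s}(\mathcal{F}_{\mathcal{M}})$ has exactly two elements. The first step is therefore to single out these two elements explicitly. Since $\beta_{\textbf{A}}$ and $\beta_{\textbf{B}}$ are injective, $\theta(\mathcal{X}(\textbf{A}))$ and $\theta(\mathcal{X}(\textbf{B}))$ are the identity congruences while $\theta(\emptyset)$ is the total congruence; and both $(\emptyset,\emptyset)$ and $(\mathcal{X}(\textbf{A}),\mathcal{X}(\textbf{B}))$ satisfy (CL1) and (CL2) trivially, hence are strongly closed. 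Consequently $(\emptyset,\emptyset)$ is the least element of $\mathcal{C}_{s}(\mathcal{F}_{\mathcal{M}})$ and $(\mathcal{X}(\textbf{A}),\mathcal{X}(\textbf{B}))$ is the greatest, and these correspond respectively to the greatest and the least element of $Con(\mathcal{M})$. Thus $\mathcal{M}$ is simple if and only if these two are distinct and $\mathcal{C}_{s}(\mathcal{F}_{\mathcal{M}})$ contains nothing else.

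For the direction from simplicity to the closure condition, assume $\mathcal{C}_{s}(\mathcal{F}_{\mathcal{M}}) = \{(\emptyset,\emptyset),(\mathcal{X}(\textbf{A}),\mathcal{X}(\textbf{B}))\}$ and fix $(P,Q) \in \mathcal{X}(\textbf{A}) \times \mathcal{X}(\textbf{B})$. The pair ${\rm{cl}}_{\mathcal{C}_{s}}(P,Q)$ is a strongly closed set whose components contain $P$ and $Q$ respectively; in particular it is not $(\emptyset,\emptyset)$, so it must be $(\mathcal{X}(\textbf{A}),\mathcal{X}(\textbf{B}))$. This gives one implication.

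For the reverse direction, assume ${\rm{cl}}_{\mathcal{C}_{s}}(P,Q) = \mathcal{X}(\textbf{A}) \times \mathcal{X}(\textbf{B})$ for every $(P,Q)$. The idea is that $\mathcal{C}_{s}(\mathcal{F}_{\mathcal{M}})$ is join-generated by the sets ${\rm{cl}}_{\mathcal{C}_{s}}(P,Q)$, each of which is the top by hypothesis, so the only strongly closed sets left are the bottom $(\emptyset,\emptyset)$ and the top. Concretely: given a strongly closed $(Z_{1},Z_{2}) \neq (\emptyset,\emptyset)$ with $Z_{1}$ and $Z_{2}$ both non-empty, pick $P \in Z_{1}$ and $Q \in Z_{2}$; since $(Z_{1},Z_{2})$ itself is a strongly closed set containing $(P,Q)$, minimality of the closure forces $\mathcal{X}(\textbf{A}) \times \mathcal{X}(\textbf{B}) = {\rm{cl}}_{\mathcal{C}_{s}}(P,Q) \subseteq Z_{1} \times Z_{2}$, i.e.\ $(Z_{1},Z_{2}) = (\mathcal{X}(\textbf{A}),\mathcal{X}(\textbf{B}))$. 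Hence $|\mathcal{C}_{s}(\mathcal{F}_{\mathcal{M}})| = 2$ and $\mathcal{M}$ is simple.

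The step I expect to require the most care is justifying that this join-generation really accounts for \emph{every} element of $\mathcal{C}_{s}(\mathcal{F}_{\mathcal{M}})$, that is, that there is no strongly closed set $(Z_{1},Z_{2})$ distinct from $(\emptyset,\emptyset)$ with exactly one component empty (equivalently, that no congruence lying strictly above the identity is ``invisible'' to the singleton closures). This has to be read off from (CL1) and (CL2) by unwinding the definitions of $R^{1},R^{2},T^{1},T^{3}$, $Max(R^{-1}(-))$ and $\mathcal{D}(-)$, and combining them with Lemma~\ref{lem_1} together with a Zorn's-Lemma construction of the required maximal/minimal prime filters, in the style of the proof of Theorem~\ref{Characterization of congruences}. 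Granting that point, the rest of the argument is the routine bookkeeping sketched above.
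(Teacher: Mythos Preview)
Your overall strategy coincides with the paper's: reduce simplicity to $\mathcal{C}_{s}(\mathcal{F}_{\mathcal{M}})=\{(\emptyset,\emptyset),(\mathcal{X}(\mathbf{A}),\mathcal{X}(\mathbf{B}))\}$ via Theorem~\ref{Characterization of congruences}(3), then pass to the pointwise closure condition. The paper's proof is a single sentence (``$\ldots$and the result follows'') and does not address the case you flag in your last paragraph; your write-up is strictly more careful in that respect.

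However, the concern you isolate is not merely a point ``requiring care'' that can be discharged by unwinding Lemma~\ref{lem_1} and Zorn's Lemma: it is an actual obstruction, and your proposed method cannot close it. Conditions (CL1) and (CL2) of Definition~\ref{strongly closed sets} are both universally quantified over elements of $Z_{1}$, so when $Z_{1}=\emptyset$ they hold vacuously; hence $(\emptyset,Z_{2})$ is strongly closed for \emph{every} closed $Z_{2}\subseteq\mathcal{X}(\mathbf{B})$. On the congruence side this is the observation that $(\nabla^{\mathbf{A}},\theta_{\mathbf{B}})$ satisfies Definition~\ref{DLFI Congruences} for any lattice congruence $\theta_{\mathbf{B}}$, since the compatibility conditions only demand membership in $\theta_{\mathbf{A}}=\nabla^{\mathbf{A}}$. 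Thus whenever $|A|>1$ and $|B|>1$ the pair $(\emptyset,\mathcal{X}(\mathbf{B}))$---corresponding to $(\nabla^{\mathbf{A}},\Delta^{\mathbf{B}})$---is a third element of $\mathcal{C}_{s}(\mathcal{F}_{\mathcal{M}})$, and this is entirely independent of the hypothesis ${\rm cl}_{\mathcal{C}_{s}}(P,Q)=(\mathcal{X}(\mathbf{A}),\mathcal{X}(\mathbf{B}))$ for all $(P,Q)$. So the reverse implication, as you set it up, cannot be completed: your Case ``$Z_{1}=\emptyset$, $Z_{2}\neq\emptyset$'' does occur and is not ruled out by the closure hypothesis. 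The paper's one-line argument glosses over exactly the same gap.
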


\begin{proof}
Since $\mathcal{M}$ is simple if and only $Con(\mathcal{M})=\{(\Delta^{\mathbf{A}}, \Delta^{\mathbf{B}}), (\nabla^{\mathbf{A}},\nabla^{\mathbf{B}})\}$, then by Theorem \ref{Characterization of congruences} this is equivalent to $\mathcal{C}_{s}(\mathcal{F}_{\mathcal{M}})=\{(\emptyset,\emptyset),(\mathcal{X}(\textbf{A}),\mathcal{X}(\textbf{B}))\}$ and the result follows. 
\end{proof}

\begin{theorem} \label{Subdirectly irreducible algebras}
Let $\mathcal{M}$ be a FIDL-module and $\mathcal{F}_{\mathcal{M}}$ be the Urquhart space associated of $\mathcal{M}$. Then $\mathcal{M}$ is subdirectly irreducible but no simple if and only if the set 
\begin{equation*}
\mathcal{J} = \{ (P,Q) \in \mathcal{X}(\textbf{A}) \times \mathcal{X}(\textbf{B}) \colon {\rm{cl}}_{\mathcal{C}_{s}}(P,Q)=( \mathcal{X}(\textbf{A}), \mathcal{X}(\textbf{B})) \}
\end{equation*}
is a non-empty open set distinct from $(\mathcal{X}(\textbf{A}), \mathcal{X}(\textbf{B}))$.
\end{theorem}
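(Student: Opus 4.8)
The plan is to translate the whole statement, via the anti-isomorphism of Theorem~\ref{Characterization of congruences}, into a statement about the lattice $\mathcal{C}_{s}(\mathcal{F}_{\mathcal{M}})$ of strongly closed sets of the associated Urquhart space, and then to combine it with Corollary~\ref{subdirectly irreducible DLFI-modules} and Proposition~\ref{Simple algebras}; throughout we may assume $\mathcal{M}$ is non-trivial. I would first record two elementary closure properties of $\mathcal{C}_{s}(\mathcal{F}_{\mathcal{M}})$: it is closed under arbitrary componentwise intersections and under finite componentwise unions. Both are immediate from conditions (CL1) and (CL2) of Definition~\ref{strongly closed sets} together with the corresponding closure properties of closed subsets of a topological space, since the sets $Max(R_{\mathcal{M}}^{-1}(P))$ and $\mathcal{D}(P)$ do not involve the chosen pair of closed sets. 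In particular $\mathcal{C}_{s}(\mathcal{F}_{\mathcal{M}})$ is a complete lattice whose meets are computed componentwise, so ${\rm{cl}}_{\mathcal{C}_{s}}(P,Q)$ is simply $\bigcap\{(Z_{1},Z_{2})\in\mathcal{C}_{s}(\mathcal{F}_{\mathcal{M}}) \colon P\in Z_{1},\ Q\in Z_{2}\}$, the least strongly closed set whose rectangle $Z_{1}\times Z_{2}$ contains $(P,Q)$. Hence $(P,Q)\notin\mathcal{J}$ precisely when $(P,Q)\in Z_{1}\times Z_{2}$ for some strongly closed set $(Z_{1},Z_{2})$ different from $(\mathcal{X}(\textbf{A}),\mathcal{X}(\textbf{B}))$, that is,
\begin{equation*}
(\mathcal{X}(\textbf{A})\times\mathcal{X}(\textbf{B}))\setminus\mathcal{J}=\bigcup\{Z_{1}\times Z_{2} \colon (Z_{1},Z_{2})\in\mathcal{C}_{s}(\mathcal{F}_{\mathcal{M}}),\ (Z_{1},Z_{2})\neq(\mathcal{X}(\textbf{A}),\mathcal{X}(\textbf{B}))\}.
\end{equation*}

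For the forward implication, assume $\mathcal{M}$ is subdirectly irreducible but not simple. Not being simple, Proposition~\ref{Simple algebras} supplies a pair $(P,Q)$ with ${\rm{cl}}_{\mathcal{C}_{s}}(P,Q)\neq(\mathcal{X}(\textbf{A}),\mathcal{X}(\textbf{B}))$, so $\mathcal{J}$ is a proper subset of $\mathcal{X}(\textbf{A})\times\mathcal{X}(\textbf{B})$. Being subdirectly irreducible and non-trivial, Corollary~\ref{subdirectly irreducible DLFI-modules} gives a minimal non-trivial FIDL-congruence $\mu$, which under the anti-isomorphism of Theorem~\ref{Characterization of congruences} corresponds to the greatest proper strongly closed set $(W_{1},W_{2})$. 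Every proper strongly closed set is then contained componentwise in $(W_{1},W_{2})$, so the union displayed above reduces to the single rectangle $W_{1}\times W_{2}$; since $W_{1},W_{2}$ are closed, $\mathcal{J}=(\mathcal{X}(\textbf{A})\times\mathcal{X}(\textbf{B}))\setminus(W_{1}\times W_{2})$ is open, and since $\mu$ is non-trivial the inclusion $(W_{1},W_{2})\subsetneq(\mathcal{X}(\textbf{A}),\mathcal{X}(\textbf{B}))$ is strict, whence $\mathcal{J}\neq\emptyset$.

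For the converse, assume $\mathcal{J}$ is non-empty, open and a proper subset of $\mathcal{X}(\textbf{A})\times\mathcal{X}(\textbf{B})$. Properness together with Proposition~\ref{Simple algebras} already gives that $\mathcal{M}$ is not simple, so by Corollary~\ref{subdirectly irreducible DLFI-modules} it remains to produce a minimal non-trivial FIDL-congruence, equivalently a greatest proper strongly closed set. Put $\mathcal{K}=(\mathcal{X}(\textbf{A})\times\mathcal{X}(\textbf{B}))\setminus\mathcal{J}$; this is closed and hence compact, and I would let $W_{1}$ and $W_{2}$ be its projections onto $\mathcal{X}(\textbf{A})$ and $\mathcal{X}(\textbf{B})$, which are closed by compactness. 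The decisive step is to prove that $\mathcal{K}=W_{1}\times W_{2}$ and that $(W_{1},W_{2})\in\mathcal{C}_{s}(\mathcal{F}_{\mathcal{M}})$: the inclusion $\mathcal{K}\subseteq W_{1}\times W_{2}$ is immediate, the reverse one would use closure of $\mathcal{C}_{s}(\mathcal{F}_{\mathcal{M}})$ under finite componentwise unions to fuse the two one-sided witnesses of membership in $\mathcal{K}$ into a single rectangle, and conditions (CL1), (CL2) for $(W_{1},W_{2})$ would be read off from the description of $\mathcal{K}$ as a union of rectangles of strongly closed sets. Once this is established, $(W_{1},W_{2})$ is automatically the greatest proper strongly closed set, it is proper because $\mathcal{J}\neq\emptyset$, and the corresponding congruence is the desired minimal non-trivial FIDL-congruence.

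The hard part is the decisive step of the converse: promoting the closed set $\mathcal{K}$, a priori only a union of rectangles, to a single rectangle $W_{1}\times W_{2}$ lying in $\mathcal{C}_{s}(\mathcal{F}_{\mathcal{M}})$. This is exactly where openness of $\mathcal{J}$ must be used essentially (in the one-sorted Priestley setting the analogous difficulty is invisible, since there is only one space), and it is the point most sensitive to the interplay between the two Priestley topologies and the relations $R_{\mathcal{M}}$ and $T_{\mathcal{M}}$; everything else is bookkeeping around Theorem~\ref{Characterization of congruences}, Corollary~\ref{subdirectly irreducible DLFI-modules} and Proposition~\ref{Simple algebras}.
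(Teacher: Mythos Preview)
Your overall strategy coincides with the paper's: translate via the anti-isomorphism of Theorem~\ref{Characterization of congruences} and then invoke Corollary~\ref{subdirectly irreducible DLFI-modules} and Proposition~\ref{Simple algebras}. The forward implication is handled identically: the minimal non-trivial FIDL-congruence dualizes to a greatest proper $(W_{1},W_{2})\in\mathcal{C}_{s}(\mathcal{F}_{\mathcal{M}})$, and one checks that $\mathcal{J}$ is exactly the complement of the rectangle $W_{1}\times W_{2}$, hence open, non-empty and proper.

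For the converse, the paper's own argument is a single line (modulo evident typos, it asserts that the complement of $\mathcal{J}$ ``is'' the maximum proper strongly closed set), so your sketch is already more explicit than what the paper offers. You are right to isolate as the crux the passage from the closed set
\[
\mathcal{K}=\bigcup\bigl\{Z_{1}\times Z_{2}\colon (Z_{1},Z_{2})\in\mathcal{C}_{s}(\mathcal{F}_{\mathcal{M}}),\ (Z_{1},Z_{2})\neq(\mathcal{X}(\mathbf{A}),\mathcal{X}(\mathbf{B}))\bigr\}
\]
to a single rectangle $W_{1}\times W_{2}$ with $(W_{1},W_{2})\in\mathcal{C}_{s}(\mathcal{F}_{\mathcal{M}})$. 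However, your proposed ``fusing'' mechanism has a concrete gap: closure of $\mathcal{C}_{s}(\mathcal{F}_{\mathcal{M}})$ under finite componentwise unions (which is indeed correct) does not by itself give $W_{1}\times W_{2}\subseteq\mathcal{K}$, because the componentwise union of two \emph{proper} strongly closed pairs need not be proper. Concretely, if $(P,Q')\in\mathcal{K}$ is witnessed by a proper pair whose first component is all of $\mathcal{X}(\mathbf{A})$, and $(P',Q)\in\mathcal{K}$ is witnessed by a proper pair whose second component is all of $\mathcal{X}(\mathbf{B})$, their componentwise union is the full pair $(\mathcal{X}(\mathbf{A}),\mathcal{X}(\mathbf{B}))$ and yields no information about $(P,Q)$. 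So the fusing step, as written, does not establish the reverse inclusion, and the way in which openness of $\mathcal{J}$ is supposed to intervene remains unspecified. The paper does not close this gap either; it simply asserts the conclusion.
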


\begin{proof}
Let us assume that $\mathcal{M}$ is subdirectly irreducible. Then $Con(\mathcal{M})-\{(\Delta^{\mathbf{A}},\Delta^{\mathbf{B}})\}$ has a minimum element. From Theorem \ref{Characterization of congruences}, $\mathcal{C}_{s}(\mathcal{F}_{\mathcal{M}})-(\mathcal{X}(\textbf{A}),\mathcal{X}(\textbf{B}))$ has a maximum element. Let $(Z_{1},Z_{2})$ be such an element. Then $Z_{1}$ and $Z_{2}$ are non-empty. We prove that $\mathcal{J}=(Z_{1},Z_{2})-(\mathcal{X}(\textbf{A}),\mathcal{X}(\textbf{B}))$. On the one hand, if $(P,Q)\notin (Z_{1},Z_{2})$, then $(Z_{1},Z_{2}) \subseteq (Z_{1},Z_{2}) \cup {\rm{cl}}_{\mathcal{C}_{s}}(P,Q)$. So it must be that ${\rm{cl}}_{\mathcal{C}_{s}}(P,Q)=( \mathcal{X}(\textbf{A}), \mathcal{X}(\textbf{B}))$, because if it is not the case, then $(Z_{1},Z_{2})$ it would not be the maximum of $\mathcal{C}_{s}(\mathcal{F}_{\mathcal{M}})-(\mathcal{X}(\textbf{A}),\mathcal{X}(\textbf{B}))$, which is a contradiction. On the other hand, if $(P,Q) \in \mathcal{J} \cap (Z_{1},Z_{2})$, then ${\rm{cl}}_{\mathcal{C}_{s}}(P,Q)=( \mathcal{X}(\textbf{A}), \mathcal{X}(\textbf{B}))=(Z_{1},Z_{2})$, which is absurd from assumption. We conclude the proof by noticing that if $\mathcal{J}$ is a non-empty open set distinct from $(\mathcal{X}(\textbf{A}), \mathcal{X}(\textbf{B}))$, then it is easy to see that $\mathcal{J}-(\mathcal{X}(\textbf{A}), \mathcal{X}(\textbf{B}))$ is the maximum of $\mathcal{C}_{f}(\mathcal{F}_{\mathcal{M}})-(\mathcal{X}(\textbf{A}),\mathcal{X}(\textbf{B}))$. Then the result is an immediate consequence of Theorem \ref{Characterization of congruences}.
\end{proof}

\end{document}